\newtheorem{theorem}{Theorem}[section]
\newtheorem{Theorem}{Main Result}
\newtheorem{prop}[theorem]{Proposition}
\newtheorem{lemma}[theorem]{Lemma}
\newtheorem{cor}[theorem]{Corollary}
\theoremstyle{remark}
\theoremstyle{definition}
\newtheorem{definition}[theorem]{Definition}
\newtheorem*{cplx*}{Theorem~\ref{thm:cplxofgroups}}
\newtheorem*{cones*}{Theorems~\ref{prop:ultraproducts} and \ref{thm:cones}}
\def\<{\langle}
\def\>{\rangle}
\newcommand{\cA}{\mathcal{A}}
\def\qed{{\hfill\hphantom{.}\nobreak\hfill$\Box$}}
\newcommand{\A}{\mathbb{A}} %Model space
\newcommand{\R}{\mathbb{R}}
\newcommand{\N}{\mathbb{N}}
\newcommand{\RS}{\mathrm{R}}%Root system
\newcommand{\Q}{\mathbb{Q}}
\newcommand{\MS}{\mathbb{A}} %Model space 
\newcommand{\sW}{\overline{W}} %spherical Weyl group	
\newcommand{\aW}{W} %affine Weyl group
\newcommand{\Cf}{\mathcal{{C}}_{f}} %fundamental Weyl chamber / Sector 
\newcommand{\define}{\mathrel{\mathop:}=}
\newcommand{\uprod}{^*\!} %Ultraproduct
\newcommand{\ud}{\uprod d} % distance on ultraproduct
\newcommand{\Cone}{\mathrm{Cone}} %asymptotic cone
\newcommand{\Ucone}{\mathrm{UCone}} %ultra cone
\begin{document}

\author{
Petra N. Schwer (n\'ee Hitzelberger)\thanks{The first author is supported by the ``SFB 478 Geometrische Strukturen in der Mathematik'' at the Institute of Mathematics,  University of M\"unster} 
\and
Koen Struyve\thanks{The second author is supported by the Fund for Scientific Research -- Flanders (FWO -- Vlaanderen)}}
\title{\bf $\Lambda$--buildings and base change functors}

%\date{}
\maketitle

\begin{abstract}
We prove an analog of the base change functor of $\Lambda$--trees in the setting of generalized affine buildings. The proof is mainly based on local and global combinatorics of the associated spherical buildings. As an application we obtain that the class of generalized affine buildings is closed under taking ultracones and asymptotic cones. Other applications involve a complex of groups decompositions and fixed point theorems for certain classes of generalized affine buildings.

%Further we will apply the base change functor to see, that certain $\Lambda$-affine buildings do have a maximal atlas. 
%Another consequence is the existence of a complex of groups decomposition for groups acting nicely on certain generalized affine buildings.
\end{abstract}

\section{Introduction}

The so-called $\Lambda$--trees have been studied by Alperin and Bass \cite{AlperinBass}, Morgan and
Shalen \cite{MorganShalen} and others and have proven to be a useful tool in understanding
properties of groups acting nicely on such spaces. $\Lambda$--trees are a natural generalization of $\R$--trees. Here $\Lambda$ is an arbitrary ordered abelian group replacing the copies of the real line in the concept of an $\R$--tree or the geometric realizations of simplicial trees.

Since simplicial trees are precisely the one-dimensional examples of affine
buildings and real trees the one-dimensional $\R$--buildings, 
it was natural to ask whether there is a higher dimensional object
generalizing $\Lambda$--trees and affine buildings at the same time.

These objects, the so-called $\Lambda$--affine buildings or 
\emph{generalized affine buildings}, where introduced by Curtis Bennett \cite{Bennett} and recently studied by the first author in \cite{PetraThesis} and \cite{Convexity2}. A recent application of them is a short proof of the Margulis conjecture by Kramer and Tent \cite{KramerTent}.

In the present paper, we address a generalization of an important geometric property of $\Lambda$--trees: the existence of a base change functor. Easy to prove in the tree case, see for example \cite{Chiswell}, the generalization to $\Lambda$--affine buildings turns out to be much harder.

We will prove that a morphism $e:\Lambda \to\Gamma$ of ordered abelian groups 
gives rise to a base change functor $\phi$ mapping a generalized affine building $X$ 
defined over $\Lambda$ to another building $X'$ which is defined over $\Gamma$.
In case $e$ is an epimorphism, we will see, that the pre-image $X''$ under $\phi$ of a point  in $X'$ is again a generalized affine building, defined over the kernel of $e$.

After having established our main results, we will present several applications.
One of the consequences of our base change theorem is the proof of the fact that the class of generalized affine buildings is closed under taking asymptotic cones and ultracones.
%A further application is a graph of groups decomposition for groups acting nicely on generalized affine buildings which are defined over $\mathbb{Z}\times\Lambda$. 

%Furthermore in case $\Lambda= \R\times\Lambda'$ we will, using observe the base change functor associated to the projection onto the second coordinate $\Lambda'$, prove existence of a maximal atlas for affine buildings defined over $\Lambda$.

\subsection{Our main results}\label{MR}

A set $X$ together with a collection $\cA$  of charts $f:\MS\to X$ from a model space $\MS$ into $X$ is a \emph{generalized affine building} if certain compatibility and richness conditions, as stated in Definition~\ref{Def_LambdaBuilding}, are satisfied. These conditions imply the existence of a spherical building $\partial_\cA X$ at infinity and of spherical buildings $\Delta_{x}X$, called  \emph{residues}, around each point $x\in X$, see Section~\ref{section:lg}. The model space $\MS$ is defined with respect to a spherical root system $\RS$ and a totally ordered abelian group $\Lambda$. Therefore it is sometimes denoted by $\MS(\RS,\Lambda)$. As a set it is isomorphic to the space of formal sums $
\left\{\sum_{\alpha\in B} \lambda_\alpha\alpha : \lambda_\alpha\in\Lambda\right\},
$
 where $B$ is a basis of $\RS$. 

The model space carries an action of an affine Weyl group $\aW$ and the transition maps of charts are given by elements of $\aW$. 
One of the conditions $(X,\cA)$ has to satisfy is, that every pair of points $x$ and $y$ is contained in a common apartment $f(\MS)$ with $f\in\cA$. 

Given a morphism $e:\Lambda\to\Lambda'$ of ordered abelian $\Q [\{\alpha^\vee(\beta)\}_{\alpha,\beta\in\RS}]$--modules $\Lambda$ and $\Lambda'$
and let $(X,\cA)$ be an affine building with model space $\MS(\RS,\Lambda)$ (or shortly $\MS$) and distance function $d$ which is induced by the standard distance on the model space. Then we have the following:

\begin{theorem}\label{thm:combi}
There exists a $\Lambda'$--building $(X',\cA')$ and (functorial) map $\phi:X \rightarrow X'$ such that $\phi$ maps $\cA$ to $\cA'$ and such that 
$$
d'(\phi(x),\phi(y)) = e(d(x,y)) 
$$ 
for all $x,y \in X$, where $d$, $d'$ are the distance functions defined respectively on $(X,\cA)$ and $(X',\cA')$. Furthermore, the spherical buildings $\partial_\cA X$ and $\partial_{\cA'} X'$ at infinity are isomorphic.

\end{theorem}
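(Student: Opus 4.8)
The plan is to construct $(X',\cA')$ directly from the chart atlas of $(X,\cA)$, by pushing the model space forward along $e$ and reducing every building axiom to combinatorics of spherical buildings.

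First I would base change the model space: applying $e$ to the coefficients of $\sum_{\alpha\in B}\lambda_\alpha\alpha$ gives a map $e_\ast\colon\MS(\RS,\Lambda)\to\MS(\RS,\Lambda')$. Since $e$ is $\Q[\{\alpha^\vee(\beta)\}]$--linear and order preserving, $e_\ast$ intertwines the induced homomorphism $\aW\to\aW'$ of affine Weyl groups (identity on the spherical part $\sW$, the map $e$ on translations), sends half-apartments to half-apartments, restricts to a bijection on Weyl chambers, and satisfies $d'(e_\ast a,e_\ast b)=e(d(a,b))$ by definition of the standard distance; in particular it is compatible with germs of segments. Now put
\[
X'\define\Bigl(\coprod_{f\in\cA}\MS(\RS,\Lambda')\Bigr)\big/\!\sim\,,
\]
gluing the copy of $\MS(\RS,\Lambda')$ indexed by $f$ to the one indexed by $g$ along the subset $\Omega_{f,g}$ obtained from the overlap $f^{-1}\!\left(f(\MS)\cap g(\MS)\right)$ — an intersection of half-apartments, as the overlap of two apartments always is — by base changing the defining affine roots through $e$, the gluing itself being the base-changed transition element $e_\ast(w_{f,g})\in\aW'$. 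Equivariance of $e_\ast$ makes each gluing well defined and turns the triple-overlap cocycle condition into the one already known in $X$. Let $\cA'$ be the collection of images of the copies of $\MS(\RS,\Lambda')$, and define $\phi(x)\define[\,f,\,e_\ast(f^{-1}(x))\,]$ for any chart $f$ with $x\in f(\MS)$; well-definedness is exactly the compatibility of the gluings. The distance formula $d'(\phi(x),\phi(y))=e(d(x,y))$ is then immediate, as $x$ and $y$ share an apartment of $X$ on which it reads $d'(e_\ast a,e_\ast b)=e(d(a,b))$.

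It remains to check that $(X',\cA')$ satisfies Definition~\ref{Def_LambdaBuilding}. The soft axioms — transition maps lie in $\aW'$, apartments are isometric to $\MS(\RS,\Lambda')$, existence of retractions, the metric conditions — follow formally from their counterparts for $X$ via $e_\ast$. The real content is the axiom that any two points (and, more strongly, a point and a sector, or two half-apartments) lie in a common apartment, together with the statement that two apartments of $\cA'$ intersect in a convex set with transition map in $\aW'$. The obstacle is that $e$ can turn directions in general position in a residue of $X$ into incident ones in $X'$, so the apartments inherited from $\cA$ need not be enough and new ones must be produced. I would organize this by factoring $e$ as the epimorphism $\Lambda\twoheadrightarrow e(\Lambda)$ followed by the inclusion $e(\Lambda)\hookrightarrow\Lambda'$. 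For the epimorphism $\phi$ collapses the $\ker e$--directions, the needed apartments are genuine images of apartments of $X$, and the same analysis exhibits each fibre $\phi^{-1}(x')$ as a $\ker e$--building (the second main result). For the inclusion genuinely new apartments are required: here one argues locally in the residues of $X'$, which are honest spherical buildings, using gallery connectivity and the standard criterion for a chamber system to carry the structure of a building to obtain apartments of links through prescribed simplices, and then globalizes along geodesics using the global spherical building $\partial_\cA X$. This local-to-global construction of apartments is the main obstacle.

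Functoriality is then the identity $(e_2\circ e_1)_\ast=e_{2\ast}\circ e_{1\ast}$ on model spaces, which propagates through the quotient construction. For the last assertion: a point of $\partial_\cA X$ is a parallelism class of sectors, and a sector is the image under a chart of a Weyl chamber of $\MS$; since $e_\ast$ is a bijection on Weyl chambers and $\phi$ both preserves and — using that $\ker e$ is bounded in $\Lambda$ — reflects parallelism of sectors, $\phi$ induces a type-preserving bijection on the chambers at infinity that respects incidence, i.e. an isomorphism $\partial_\cA X\cong\partial_{\cA'}X'$ of spherical buildings.
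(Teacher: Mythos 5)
Your overall route coincides with the paper's: factor $e$ as an epimorphism followed by a monomorphism and build $X'$ explicitly from the atlas (your glued space $\coprod_{f\in\cA}\MS(\RS,\Lambda')/\!\sim$ is, set--theoretically, the metric quotient of Section~\ref{section:firstMain} in the epimorphism case and the space $(\cA\times\MS')/\!\sim$ of Section~\ref{section:thirdMain} in the monomorphism case). But constructing the set, the atlas and the distance is the easy part; the substance of Theorem~\ref{thm:combi} is the verification of the axioms, and exactly there your proposal has a genuine gap. In the epimorphism case the delicate point is (A2) for apartments $A,B$ of $X$ with $A\cap B=\emptyset$ but $\phi(A)\cap\phi(B)\neq\emptyset$ (this happens as soon as disjoint apartments contain points at distance in $\ker e$); the ``same analysis'' you invoke for this case is never given. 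One needs the chain of Lemmas~\ref{lemma:inker}--\ref{lemma:compare} (producing a point of $K\cap K'$ at $\ker e$--distance from two identified points), the germ comparison of Lemma~\ref{lemma:germs}, the fact that $\Delta_{\phi(x)}X'$ is a spherical building (Lemma~\ref{lemma:spheric}), and then a convexity argument via half-apartments and \cite[Prop.~3.137]{AB} together with a residue-level isomorphism to manufacture the Weyl element $w$; none of this follows ``formally from $e_\ast$--equivariance''.

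In the monomorphism case your diagnosis is also off: no genuinely new apartments are produced. In the paper the apartments of $(X',\cA')$ are precisely the $e_\ast$--enlargements $\psi(f)(\MS')$ of the old apartments, in bijection with those of $X$; the content is that these already satisfy (A3), which is proved by first establishing (A2) and (A6) directly from the construction, obtaining (A4) and the residue buildings via chambers at infinity, and then showing that for two apartments $A_1,A_2$ the finitely many apartments determined by pairs of opposite chambers $c_i\in\partial A_i$ cover all pairs $(x,y)\in A_1\times A_2$ (a connectedness argument on the set $K$ of points of $A_1$ sharing an apartment with $x_2$, using a germ at the ``border'' of $K$). Your substitute --- gallery connectivity plus ``the standard criterion for a chamber system to carry the structure of a building'', then ``globalizing along geodesics'' --- is not an argument: segments between an old and a new point of $X'$ are not available before (A3) is known, and you yourself concede that this local-to-global step is the main obstacle without resolving it. Two smaller slips: $e_\ast$ is only a surjection, not a bijection, on Weyl chambers when $\ker e\neq 0$; and the isomorphism $\partial_\cA X\cong\partial_{\cA'}X'$ is not immediate in the epimorphism case --- it rests on the consequence of Lemma~\ref{lemma:germs} that distinct apartments remain distinct after the quotient.
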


%TO CHECK: is this indeed a functor ?????

The map $\phi$ will be referred to as the \emph{base change functor} associated to $e$.

Every morphism of abelian groups can be written as the composition of an epimorphism followed by a monomorphism. The kernel of an epimorphism of ordered abelian groups is convex in the sense that given some $x>0$ in this kernel then $x\geq y\geq 0$ implies that $y$ is also contained in the kernel. Therefore the ordering of an abelian group $\Lambda$ induces an order on the quotient of $\Lambda$ by the kernel of an epimorphism. Hence morphisms of ordered abelian groups can also be decomposed into an epimorphism followed by a monomorphism. 

We will prove the two cases, of an epimorphism and a monomorphism, separately in the first and third Main Result. Theorem~\ref{thm:combi} is a direct consequence of the combination of both.

%%%%%%%%%%%%%%%%%%

In case $e:\Lambda\to\Lambda'$ is an epimorphism one defines the base change functor $\phi$ and the building $X'$ is as follows. Let two points $x,y\in X$ be equivalent, denoted by $x\sim y$, when $d(x,y)\in\ker(e)$ and let $X'$ be the quotient of $X$ defined by this equivalence relation. Defining a metric $d'$ on $X'$ by $d'(\phi(x),\phi(y))\define e(d(x,y))$ the quotient map $\phi:X\to X'$ turns out to satisfy the properties needed for the first Main Result. 

It is possible to define a set of charts $\cA'$ from $\MS'=\MS(\RS,\Lambda')$ into $X'$ such that the following theorem holds. For details see Section~\ref{section:firstMain}.

\begin{Theorem}\label{theorem:firstmain}
Let $e: \Lambda \rightarrow \Lambda'$ be an epimorphism of ordered abelian groups and $(X,\cA)$ a $\Lambda$--building. Then the following hold:
\begin{enumerate}
\item\label{item:mr11}
There exists a $\Lambda'$--building $(X',\cA')$ and a map $\phi:X \twoheadrightarrow X'$, called the \emph{base change functor} associated to $e$,  such that the apartment system $\cA$ is mapped onto $\cA'$ by $\phi$ and such that $\phi$ satisfies
\begin{equation*}\label{equ:mr1}
d'(\phi(x),\phi(y)) = e(d(x,y)) 
\end{equation*}
for all $x,y \in X$, where $d$, $d'$ are the distance functions defined respectively on $(X,\cA)$ and $(X',\cA')$.  Moreover, the spherical buildings $\partial_\cA X$ and $\partial_{\cA'} X'$ at infinity are isomorphic.

% REMOVED THIS PART OUT, DEBATABLE
%\item\label{item:mr12}
%If $(Z,d'')$ is another metric space and $\psi: X\twoheadrightarrow Z$ is a map satisfying (\ref{equ:mr1}) with respect to the metric $d''$ on $Z$ in place of $d'$, then there exists an isometry $\mu: X'\to Z$ with $\mu\circ \phi= \psi$. 
%The space $Z$ can be endowed with an induced atlas such that $X'$ and $Z$ are isomorphic as affine buildings. 

\item\label{item:mr13}
These base change functors act as a functor on the category of $\Lambda$--buildings to the one of the $\Lambda'$--buildings. In particular, let $G$ be a group acting on $X$ by isometries, then $G$ also acts on $X'$ by isometries and the map $\phi$ is $G$--equivariant. 
\end{enumerate}
\end{Theorem}

As mentioned above, the kernel of an ordered abelian group is again such. It turns out that one can prove that the fibers of the base change functor are again generalized affine buildings. Details of the proof can be found in Section~\ref{section:secondMain}.

\begin{Theorem}\label{theorem:secondmain}
Let $\phi$ be a base change functor associated to an ordered abelian group epimorphism $e: \Lambda \rightarrow \Lambda'$, applied to a $\Lambda$--building $(X,\cA)$. 
For all elements $x$ of $X$ the following is true.
\begin{enumerate}
 \item\label{item:mr21} The set $X'' =\phi^{-1}(\phi(x))$ admits a set of charts $\cA''$ making it into a $\ker(e)$--building with as distance function $d''$ the distance function inherited from $(X,\cA)$.
 \item\label{item:mr22} There is a natural isomorphism between $\partial_{\cA''}X''$ and $\Delta_{\phi(x)}X'$, where $(X',\cA')$ is as in the first Main Result.
% \item Let $G$ be a group acting on $X$ by isometries, then the stabilizer $G_{X''}$ of the set $X''$ in $G$ acts on $X''$ by isometries.
\end{enumerate}
\end{Theorem}

One can prove a result similar to the first main one for monomorphisms of ordered abelian groups. The construction of the building $X'$ is again of explicit nature. 
The basic idea is to take the product of the old charts with the new, enlarged, model space and consider equivalence classes of these products.
Since this construction is more involved than the one in the first Main Result, let us postpone details to Section~\ref{section:thirdMain}. There we will prove

\begin{Theorem}\label{theorem:thirdmain}
Let $e: \Lambda \rightarrow \Lambda'$ be a monomorphism of ordered abelian groups and $(X,\cA)$ a $\Lambda$--building. Then the following assertions hold
\begin{enumerate}
\item\label{item:mr31} 
There exists a $\Lambda'$--building $(X',\cA')$ and a map $\phi:X \rightarrow X'$ satisfying 
\begin{equation*}%\label{equ:mr3}
d'(\phi(x),\phi(y)) = e(d(x,y)) 
\end{equation*}
for all $x,y \in X$, where $d$, $d'$ are the distance functions defined respectively on $(X,\cA)$ and $(X',\cA')$. 
Further $\phi$ maps the apartment system $\cA$ to $\cA'$ and the spherical buildings $\partial_\cA X$ and $\partial_{\cA'} X'$ at infinity are isomorphic.

\item\label{item:mr32} 
These base change functors act as a functor on the category of $\Lambda$--buildings to the one of the $\Lambda'$--buildings, but only for isometries mapping apartments to apartments. In particular, let $G$ be a group acting on $X$ by isometries stabilizing the system of apartments, then $G$ acts on $X'$ by isometries stabilizing the system of apartments and the map $\phi$ is $G$--equivariant.
\end{enumerate}
\end{Theorem}

The proofs of the Main Results 1 to 3 can be found in Sections~\ref{section:firstMain} to \ref{section:thirdMain}.
Before defining generalized affine buildings in Section~\ref{Sec:Prelim}, we will use the penultimate subsection of the introduction to state our main applications. The last subsection will be devoted to an example clarifying the main results.

\subsection{Applications}\label{section:pres_apps}

Let us quickly summarize the applications proved in Section~\ref{sec:applications}.

\subsubsection*{Asymptotic cones}

Asymptotic cones of metric spaces capture the `large scale structure' of the underlying space.  The main idea goes back to the notion of convergence of metric spaces by Gromov in the early 80's (see \cite{Gromov2}) and was later generalized using ultrafilters by van den Dries and Wilkie \cite{vandenDriesWilkie}.
Asymptotic cones provide interesting examples of metric spaces and have proven useful in the context of geometric group theory. 

In Section~\ref{sec:cones} we will prove, using the base change functor, that the class of generalized affine buildings is closed under ultraproducts, asymptotic cones and ultracones. The main results read as follows:

\begin{cones*}

\begin{itemize}
 \item The ultraproduct of a sequence $(X_i,\cA_i)_{i\in I}$ of $\Lambda_i$--affine buildings defined over the same root system $\RS$ is again a generalized affine building over $\RS$. 
 \item Asymptotic cones and ultracones of generalized affine buildings are again such.
 \item Furthermore, if $(X,\cA)$ is modeled on $\MS(\RS,\Lambda)$, then its asymptotic cone $\Cone(X)$ is modeled an $\MS(\RS,\Cone(\Lambda))$ and its ultracone $\Ucone(X)$ on $\MS(\RS,\Ucone(\Lambda))$.
\end{itemize}
\end{cones*}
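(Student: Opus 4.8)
The plan is to derive the three assertions from the base change functors of Main Results~\ref{theorem:firstmain} and~\ref{theorem:secondmain}, together with one new ingredient: that an ultraproduct of $\Lambda_i$--buildings over a common spherical root system $\RS$ is again a building. The underlying algebra is routine. For an ultrafilter $\omega$ on the index set, $\Lambda^\ast\define\prod_{i\to\omega}\Lambda_i$ is again a totally ordered abelian group, and since each $\Lambda_i$ is a module over the \emph{fixed} ring $\Q[\{\alpha^\vee(\beta)\}_{\alpha,\beta\in\RS}]$ attached to $\RS$, so is $\Lambda^\ast$. Hence the model space behaves well, $\prod_{i\to\omega}\MS(\RS,\Lambda_i)=\MS(\RS,\Lambda^\ast)$, and the affine Weyl group of $\MS(\RS,\Lambda^\ast)$ is the ultraproduct of those of the $\MS(\RS,\Lambda_i)$ (the spherical part being one and the same finite group for all $i$).

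\emph{The ultraproduct of buildings.} Set $X^\ast\define\prod_{i\to\omega}X_i$ and equip it with the $\Lambda^\ast$--valued distance $d^\ast([x_i],[y_i])\define[\,d_i(x_i,y_i)\,]$. Let $\cA^\ast$ be the collection of charts $[f_i]\colon\MS(\RS,\Lambda^\ast)\to X^\ast$ with $f_i\in\cA_i$ for $\omega$--almost all $i$. By construction, every apartment of $(X^\ast,\cA^\ast)$ is the ultraproduct of a family of apartments of the $(X_i,\cA_i)$, and every chart--transition map is the ultraproduct of chart--transition maps, hence an element of the affine Weyl group of $\MS(\RS,\Lambda^\ast)$. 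I would then verify the axioms of Definition~\ref{Def_LambdaBuilding} one at a time. Each of them is either of the form ``for all points (or germs of segments) there exist points/germs/charts with a prescribed property'', in which case a transfer argument ({\L}o\'s's theorem, applied to the point and germ quantifiers) pushes it down from $\omega$--almost every $X_i$; or it is a statement about finitely many apartments of the atlas and the transition maps between them, in which case it descends because apartments and transition maps of $(X^\ast,\cA^\ast)$ are ultraproducts of the corresponding data in the $(X_i,\cA_i)$. The one condition that is not a purely formal transfer is that the intersection of two apartments be closed and convex; for this I would argue coordinatewise, using the combinatorial description of convexity and the closedness statements of \cite{Convexity2}. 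This establishes the first bullet.

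\emph{Cones.} Let $(X,\cA)$ now be a single $\Lambda$--building; fix a basepoint $x$, an ultrafilter $\omega$, and a sequence of positive scaling factors assembled into a positive element $s=[s_i]$ of the ultrapower $\Lambda^\ast=\prod_{i\to\omega}\Lambda$ (the previous paragraph with all $\Lambda_i$ equal to $\Lambda$). Inside $\Lambda^\ast$ put
$$
O_s\define\{\lambda\in\Lambda^\ast : |\lambda|\le n\,s \text{ for some } n\in\N\},\qquad
I_s\define\{\lambda\in\Lambda^\ast : n\,|\lambda|\le s \text{ for all } n\in\N\}.
$$
Both are convex subgroups of $\Lambda^\ast$ and sub--$\Q[\{\alpha^\vee(\beta)\}]$--modules, with $I_s\subseteq O_s$; by definition $\Cone(\Lambda)=O_s/I_s$ and $\Ucone(\Lambda)=\Lambda^\ast/I_s$. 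Since a quotient of a totally ordered abelian group by a convex subgroup inherits a total order, the maps $\Lambda^\ast\twoheadrightarrow\Lambda^\ast/I_s$, $\Lambda^\ast\twoheadrightarrow\Lambda^\ast/O_s$ and $O_s\twoheadrightarrow O_s/I_s$ are epimorphisms of ordered abelian groups. Applying the base change functor of Main Result~\ref{theorem:firstmain} to $X^\ast$ and the epimorphism $\Lambda^\ast\twoheadrightarrow\Lambda^\ast/I_s=\Ucone(\Lambda)$ produces a $\Ucone(\Lambda)$--building with model space $\MS(\RS,\Ucone(\Lambda))$; this is the ultracone $\Ucone(X)$. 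For the asymptotic cone I would go through $O_s$: by Main Result~\ref{theorem:secondmain}, applied to $X^\ast$ and $\Lambda^\ast\twoheadrightarrow\Lambda^\ast/O_s$, the fibre $\phi^{-1}(\phi(x))$ over the basepoint --- which, by the explicit description of that base change functor, is exactly $\{y\in X^\ast : d^\ast(x,y)\in O_s\}$ --- is an $O_s$--building of type $\RS$; applying Main Result~\ref{theorem:firstmain} to it and the epimorphism $O_s\twoheadrightarrow O_s/I_s=\Cone(\Lambda)$ yields a $\Cone(\Lambda)$--building with model space $\MS(\RS,\Cone(\Lambda))$, which is the asymptotic cone $\Cone(X)$. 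This gives the remaining two bullets.

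\emph{Where the difficulty lies.} The substantial part is the direct proof that $(X^\ast,\cA^\ast)$ is a generalized affine building. The building axioms are not first order, since they quantify over charts, so one must first fix the atlas $\cA^\ast$ as above and argue that it is rich enough, and then check that the non--elementary conditions survive the ultraproduct: closedness and convexity of intersections of apartments, the existence of the retractions and of the chart--transition isomorphisms demanded in Definition~\ref{Def_LambdaBuilding}, and --- if one works with the description via residues --- the spherical--building structure of the links at points. Once $(X^\ast,\cA^\ast)$ is known to be a building, the cone statements become bookkeeping with the base change functors, the only extra input being the order--theoretic identities $\Cone(\Lambda)=O_s/I_s$ and $\Ucone(\Lambda)=\Lambda^\ast/I_s$.
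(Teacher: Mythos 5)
Your overall route is the same as the paper's: Proposition~\ref{prop:ultraproducts} proves the first bullet exactly as you do, by fixing the (finite, hence $\mu$-constant) spherical Weyl group, identifying $\uprod\MS(\RS,\Lambda_i)$ with $\MS(\RS,\uprod\Lambda)$, and transferring (A1)--(A6) componentwise via {\L}o\v{s}; and Theorem~\ref{thm:cones} obtains the ultracone by Main Result~\ref{theorem:firstmain} applied to an epimorphism of $\uprod\Lambda$ onto a quotient by a convex subgroup, and the asymptotic cone by Main Result~\ref{theorem:secondmain} (the fibre over the basepoint is a building over the kernel) followed by Main Result~\ref{theorem:firstmain}. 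Your asymptotic-cone step, going through the convex subgroup $O_s$ of elements bounded by multiples of the scaling element, is in fact a slightly cleaner bookkeeping than the paper's, which passes through the kernel of the ultracone map and identifies it with $X^{<\alpha>}$. The one point you should fix is the ultracone identification: in this paper the ultracone involves \emph{no} rescaling, so the relevant convex subgroup is $\uprod\Lambda_{fin}$, the elements bounded by constants from $\Lambda$, and $\Ucone(\Lambda)=\uprod\Lambda/\uprod\Lambda_{fin}$; your ``$\Ucone(\Lambda)=\Lambda^\ast/I_s$'' quotients by the $s$-infinitesimals, a strictly larger convex subgroup when $s$ is infinite, hence a different object (similarly, the paper's cone construction uses bounds by $\Lambda_+$-multiples rather than $\N$-multiples). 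Since your argument only uses that the subgroup is convex, substituting $\uprod\Lambda_{fin}$ (and, for the cone, the corresponding $\Lambda_+$-versions of $O_s$, $I_s$) repairs this verbatim; one should then also invoke part~\ref{item:mr12} of Main Result~\ref{theorem:firstmain} to identify the base-change image isometrically with $\Ucone(X)$ as defined by the relation $\sim_{fin}$, which is how the paper transfers the building structure onto the cone itself.
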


In particular we have shown that asymptotic cones of $\R$--buildings are again such, and with this yielding an alternative proof of the same result shown earlier with completely different methods by Kleiner and Leeb (see~\cite{KleinerLeeb}).

\subsubsection*{Fixed point theorems}
The base change functors can be used to reduce problems of generalized affine buildings to the (easier) case of $\R$--buildings. In Section~\ref{section:reducing} we illustrate this with a fixed point theorem for a certain class of $\Lambda$--buildings (we postpone the description of this class to the aforementioned section). The result then reads:

\begin{theorem}
A finite group of isometries of a generalized affine building $(X,\cA)$ of this class admits a fixed point.
\end{theorem}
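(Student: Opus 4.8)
The plan is to reduce to the $\R$--building case via the base change functors established in the main results, and then invoke the classical Bruhat--Tits fixed point theorem for $\mathrm{CAT}(0)$ spaces. So suppose $G$ is a finite group acting by isometries on a generalized affine building $(X,\cA)$ belonging to the class in question. First I would decompose the structural morphism of the relevant ordered abelian group $\Lambda$ (or, more precisely, arrange for a morphism $e:\Lambda\to\R$, using whatever hypothesis the definition of ``this class'' provides to guarantee such an $e$ exists and behaves well) into an epimorphism $e_1:\Lambda\to\Lambda'$ followed by a monomorphism $e_2:\Lambda'\to\R$. By Main Results~\ref{theorem:firstmain}, \ref{theorem:secondmain} and \ref{theorem:thirdmain}, each of these induces a base change functor, $G$ acts by isometries on each successive building (note the caveat in Main Result~\ref{theorem:thirdmain}: for the monomorphism step we need $G$ to stabilise the apartment system, which is automatic for a \emph{finite} group or is part of the defining hypothesis of the class), and the composite $\psi:X\to Y$ into an $\R$--building $Y$ is $G$--equivariant.

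Next I would apply the Bruhat--Tits fixed point theorem: since $Y$ is an $\R$--building it is a complete $\mathrm{CAT}(0)$ space, a finite group acting by isometries on it has a bounded orbit, hence a (unique) circumcentre, which is a fixed point $\bar x \in Y$. Pulling back, $X'' := \psi^{-1}(\bar x)$ is a nonempty, $G$--invariant subset of $X$. By the ``kernel is again a building'' statement of Main Result~\ref{theorem:secondmain} (applied to the epimorphism $e_1$, and then iterated/combined with the monomorphism step so that the fibre of the composite is controlled), $X''$ carries the structure of a $\ker$--building over the convex subgroup $\ker(\Lambda\to\R)$ — in particular it is itself a generalized affine building of strictly smaller ``rank'' in the ordered-group sense, on which $G$ still acts by isometries. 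Finally I would close the argument by induction on the complexity of $\Lambda$ (e.g. on its rational rank, or on the length of a maximal chain of convex subgroups): the base case is $\Lambda$ a subgroup of $\R$, where $X$ is already an $\R$--building and Bruhat--Tits applies directly and gives the fixed point; in the inductive step the fibre building $X''$ is defined over a smaller ordered group and lies in the same class (this is where I would need the class to be closed under passing to such kernels — presumably built into its definition), so the induction hypothesis furnishes a $G$--fixed point of $X''$, which is in particular a $G$--fixed point of $X$.

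The main obstacle I expect is bookkeeping around the defining properties of ``this class'': the argument needs (i) the existence of a well-behaved morphism $e:\Lambda\to\R$, (ii) stability of the class under the kernel construction of Main Result~\ref{theorem:secondmain}, and (iii) a termination condition so the induction actually bottoms out. A secondary technical point is that Main Result~\ref{theorem:thirdmain} only gives functoriality for isometries preserving apartments, so one must check that a finite group of isometries automatically stabilises the apartment system (or that this is part of the class definition) before the monomorphism step can be used. Once these hypotheses are unpacked, the proof is a short descent argument with the $\mathrm{CAT}(0)$ fixed point theorem at the bottom.
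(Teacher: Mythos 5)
Your overall strategy (reduce to $\R$ via base change, find a fixed point there, descend into the fibre, iterate) is the right spirit, but as written it has genuine gaps that the paper's proof is specifically designed to avoid. First, ``this class'' consists of buildings whose $\Lambda$ is a Hahn product of copies of $\R$ over a \emph{well-ordered} index set $I$, and for such $\Lambda$ there is in general no nonzero morphism $e:\Lambda\to\R$ at all: the kernel of an order-preserving homomorphism to $\R$ is convex, and when $I$ has no maximal element (e.g.\ $I=\N$) no quotient of $\Lambda$ by a convex subgroup embeds in $\R$. So your opening move cannot be arranged from the class hypotheses. The paper gets around this by using finiteness of $G$ \emph{first}: it bounds the orbit of a base point by some $g_0\in\Lambda$, passes via Main Result~\ref{theorem:secondmain} to the fibre building of points at distance in the convex subgroup $M_{g_0}$, and only then applies Main Result~\ref{theorem:firstmain} to the epimorphism $M_{g_0}\to M_{g_0}/N_{g_0}\cong\R$; the orbit bound is what selects the correct convex subgroup, and no global map to $\R$ is ever needed. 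Relatedly, your induction on rational rank or on chains of convex subgroups is not well-founded for this class (both can be infinite); the paper's termination argument is different and essential: the indices $i_{g_0}>i_{g_1}>\cdots$ of the successive orbit bounds strictly decrease in the well-ordered set $I$, so the descent stops after finitely many steps.

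Two further points would need repair even if the reduction to $\R$ were available. The Bruhat--Tits circumcentre argument needs a \emph{complete} CAT(0) space, and the $\R$-buildings produced by base change from exotic $\Lambda$-buildings have no reason to be complete; the paper instead invokes the second author's fixed point theorem for isometry groups of $\R$-buildings \cite{KS}, which does not require completeness. Finally, because your composite $\psi$ involves a monomorphism step, it is not surjective, so the fibre $\psi^{-1}(\bar x)$ over a fixed point found in the ambient $\R$-building may be empty (the circumcentre need not lie in the image of $X$). The paper only uses epimorphic base changes inside the descent loop, so every fibre it considers is automatically nonempty; the monomorphism result (Main Result~\ref{theorem:thirdmain}) is used only once, before the theorem, to justify assuming $\Lambda$ is a Hahn product in the first place.
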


\subsubsection*{Complex of groups decompositions}
One can use the first Main Result to conclude that groups acting nicely
on certain affine buildings do admit a complex of groups decomposition.
We will not carry out the details of the proof, but let us make the
statement a bit more precise.

Assume that $(X,\cA)$ is modeled over an abelian group 
$\Lambda\define\R\times\Lambda'$, where the two components are
ordered  lexicographically,
and assume further that the image of the base change functor associated
to the projection $e:\Lambda \to \R$ is a simplicial affine building.

Then, if $G$ is a subgroup of the automorphism group of $X$ such that
the induced action on $(X',\cA')$ is simplicial, the group $G$ has a
complex of groups decomposition where each vertex group acts on a
$\Lambda'$--building. In addition, if the action of $G$ on $X$ is free,
then each vertex group acts freely on a $\Lambda'$--building.

\subsection{An example}
Let us illustrate the main results with an example in an algebraic setting. We start with describing a class of generalized affine buildings (following~\cite[p. 97]{BennettThesis}). Let $K$ be a field with a valuation to an ordered abelian group $\Lambda$. Then one can define root group data with valuation for the special linear group $\mathsf{SL}(n,K)$ ($n \leq 2$). These data give rise to an $(n-1)$-dimensional $\Lambda$--building admitting a natural action of $\mathsf{SL}(n,K)$. The spherical building at infinity here is the spherical Tits building associated to $\mathsf{SL}(n,K)$. The thick residues are isomorphic to the spherical Tits building associated to $\mathsf{SL}(n,K_\nu)$, where $K_\nu$ is the residue field of the pair $(K,\nu)$.

Let $\Lambda_1$ be the lexicographically ordered group $\mathbb{R} \times \mathbb{R}$. Let $K$ be some field with a $\Lambda_1$--valued valuation $\nu$. An example could be a rational function field $k(t^{\Lambda_1})$ in the variable $t$ allowing powers in $\Lambda_1$. As mentioned above one can associate a $\Lambda_1$--building $(X,\cA)$ to the group $\mathsf{SL}(n,K)$.

Let $\Lambda_2$ be the group $\mathbb{R} \times \mathbb{R} \times \mathbb{R}$ ordered lexicographically. Let $e$ be the map $$e:\Lambda_1 \to \Lambda_2: (a,b) \mapsto (0,a,0).$$ This is a morphism of ordered abelian groups which can be split up in an epimorphism $e_1$ and monomorphism $e_2$ (so $e= e_2 \circ e_1$) where \begin{align*} e_1&:\Lambda_1 \to \mathbb{R}: (a,b) \mapsto a, \\ e_2&: \mathbb{R} \to \Lambda_2 : a \mapsto (0,a,0). \end{align*}

The image of $(X,\cA)$ under the base change functor for $e_1$ is the generalized affine building for $\mathsf{SL}(n,K)$, but now using the (real) valuation $e_1 \circ \nu$. Similarly the image under the base change functor for $e$ will be the generalized affine building associated to $\mathsf{SL}(n,K)$ with the valuation $e \circ \nu$ with values in $\Lambda_2$. Theorem~\ref{thm:combi} mentions that the spherical buildings at infinity of these generalized affine buildings are isomorphic, this is reflected in this example by the special linear group staying the same. 

To illustrate the second Main Result consider a point $x$ with a thick residue in the generalized affine building associated to $\mathsf{SL}(n,K)$ with the valuation $e_1 \circ \nu$. This residue is isomorphic to the spherical Tits building for $\mathsf{SL}(n,K_{e_1 \circ \nu})$. The second Main Result now states that the preimage of this point is an $\R$--building with as building at infinity this residue. This $\R$--building is the one defined by the (real) valuation on the residue field $K_{e_1 \circ \nu}$ induced by the  valuation $\nu$.

% %%%%%%%%%%%%%%%%%%%%%%%%%%
% Preliminaries
\section{Preliminaries}\label{Sec:Prelim}
In this section we will define $\Lambda$--buildings and state some basic results about them for use in later sections. For a detailed study of generalized affine buildings and proofs of the results in this introductory section we refer to \cite{Bennett} and \cite{PetraThesis}.

\subsection{Definition of apartments and buildings}\label{section:defs}

We will first define the model space for apartments in $\Lambda$--buildings and examine its metric structure. We conclude this subsection with the definition of a $\Lambda$--building. 

%\begin{definition}\label{Def_modelSpace}
For a (not necessarily crystallographic) spherical root system $\RS$ let $F$ be a subfield of the reals containing the set $\{\langle \beta, \alpha^\vee\rangle : \alpha, \beta \in \RS\}$ of all evaluations of co-roots on roots. Notice that $F$ can always be chosen to be the quotient field of $\Q[\{\langle \beta, \alpha^\vee\rangle : \alpha, \beta \in \RS\}]$. If $\RS$ is crystallographic this is $F=\Q$.  Assume that $\Lambda$ is a (non-trivial) totally ordered abelian group admitting an $F$--module structure and define the \emph{model space} of a generalized affine building of type $\RS$ to be the set
$$
\MS(\RS,\Lambda) = \mathrm{span}_F(\RS)\otimes_F \Lambda.
$$ 
%\end{definition}

We will often abbreviate $\MS(\RS,\Lambda)$ by $\MS$. A fixed basis $B$ of the root system $\RS$ provides natural coordinates for the model space $\MS$. The vector space of formal sums 
$$
\left\{\sum_{\alpha\in B} \lambda_\alpha\alpha : \lambda_\alpha\in\Lambda \right\}
$$ 
is canonically isomorphic to $\MS$.
The evaluation of co-roots on roots $\langle\cdot, \cdot\rangle$ is linearly extended to elements of $\MS$. Let $o$ be the point of $\MS$ corresponding to the zero vector.

By $B$ a set of positive roots $\RS^+\subset \RS$ is defined which determines the \emph{fundamental Weyl chamber} 
$$
\Cf\define\{x\in\MS : \langle x,\alpha^\vee\rangle \geq 0 \text{ for all } \alpha\in\RS^+\}
$$
with respect to $B$. By replacing some (which might be all or none) of the inequalities in the definition of $\Cf$ by equalities we obtain \emph{faces of the fundamental Weyl chamber}. 

The spherical Weyl group $\sW$ of $\RS$ acts by reflections $r_\alpha$, $\alpha\in \RS$, on the model space $\MS$.
The fixed point sets of the $r_\alpha$ are called \emph{hyperplanes} and are denoted by $H_\alpha$.
One has $H_\alpha=\{x \in \MS: r_\alpha(x)=x\}=\{x \in\MS: \langle x, \alpha^\vee\rangle =0\}$.

An \emph{affine Weyl group} is the semidirect product of a group of translations $T$ of $\MS$ by $\sW$. If $T$ equals $\MS$, then $\aW \define \sW\rtimes T$ is called the \emph{full affine Weyl group}.
The actions of $\sW$ and $T$ on $\MS$ induce an action of $\aW$ on $\MS$. 
An \emph{(affine) reflection} is an element of $\aW$ which is conjugate in $\aW$ to a reflection $r_\alpha$, $\alpha \in \RS$.
A \emph{hyperplane} $H_r$ in $\MS$ is the fixed point set of an affine reflection $r$. It determines two half-spaces of $\MS$ called \emph{half-apartments}. 

%An \emph{(affine) reflection} is an element of $\aW$ of the form $t_k\circ r_\alpha$ for arbitrary $\alpha\in\RS$ and $k\in \MS$ such that $r_\alpha(t)=-t$.

%We will always assume that $\Lambda$ has a $F$--module structure itself, if this not the case we can consider the ordered abelian group $\Lambda_F = F \otimes_F \Lambda$. Note that $\MS(\RS,\Lambda) = \MS(\RS,\Lambda_F)$, so the model spaces are the same. Note that $F \otimes_F$ acts as a functor of ordered abelian groups, so a morphism between the ordered abelian groups $\Lambda \rightarrow \Lambda'$ implies a morphism $F \otimes_F \Lambda \rightarrow F \otimes_F \Lambda'$.

We define a \emph{Weyl chamber} in $\MS$ to be an image of a fundamental Weyl chamber under the affine Weyl group $\aW$. The image of the \emph{faces} of the fundamental Weyl chamber then define the faces of this Weyl chamber. A face of a Weyl chamber will also be called a \emph{Weyl simplex}. 
Note that a Weyl simplex $S$ contains exactly one point $x$ which is the intersection of all bounding hyperplanes of $S$. We call it the \emph{base point} of $S$ and say $S$ is \emph{based at $x$}.

Let $\Lambda$ be a totally ordered abelian group and let $X$ be a set. A metric on $X$ with values in $\Lambda$, short a \emph{$\Lambda$--valued metric}, is a map $d:X\times X \mapsto \Lambda$ satisfying the usual axioms of a metric. That is positivity, symmetry ($d(x,y)=d(y,x)$), equality $d(x,y)=0$ if and only if $x=y$ and the triangle inequality for arbitrary triples of points. The pair $(X,d)$ is called \emph{$\Lambda$--metric space}.

A particular $\aW$--invariant $\Lambda$--valued metric on the model space $\MS$ is defined by
$$
d(x,y)= \sum_{\alpha\in\RS^+} \vert \langle y-x, \alpha^\vee \rangle \vert.
$$

A subset $Y$ of $\MS$ is called \emph{convex} if it is the intersection of finitely many half-apartments. This includes the empty set and $\MS$. The \emph{convex hull} of a subset $Y\subset \MS$ is the intersection of all half-apartments containing $Y$.

Note that Weyl simplices and hyperplanes, as well as finite intersections of convex sets  are convex. A convex hull of a subset of the model space is not necessarily convex due to the finiteness requirement. 

%TO CHECK: is $\WT$-convex better here ???? (also $\aW$ convexity)

\begin{definition}\label{Def_LambdaBuilding}
Let $X$ be a set and $\cA$ a collection of injective maps $f:\MS\hookrightarrow X$, called \emph{charts}.
The images $f(\MS)$  of charts $f\in\cA$ are called \emph{apartments} of $X$. Define \emph{Weyl chambers, hyperplanes, half-apartments, ... of $X$} to be images of such in $\MS$ under any $f\in\cA$. The set $X$ is a \emph{(generalized) affine building} with \emph{atlas} (or \emph{apartment system}) $\cA$ if the following conditions are satisfied
\begin{itemize}[label={(A*)}, leftmargin=*]
 \item[(A1)] If $f\in\cA$ and $w\in \aW$ then $f\circ w\in\cA$. 
 \item[(A2)] Let $f,g\in\cA$ be two charts. Then $f^{-1}(g(\MS))$ is a convex subset of $\MS$. There exists $w\in \aW$ with $f\vert_{f^{-1}(g(\MS))} = (g\circ w )\vert_{f^{-1}(g(\MS))}$. %with $f(\MS)\cap g(\MS)\neq\emptyset$
 \item[(A3)] For any two points in $X$ there is an apartment containing both.
 \item[(A4)] If $S_1$ and $S_2$ are two Weyl chambers in $X$ there exist sub-Weyl chambers $S_1' \subset S_1, S_2' \subset S_2$ in $X$ and an $f\in\cA$ such that $S_1'\cup S_2' \subset f(\MS)$. 
 \item[(A5)] For any apartment $A$ and all $x\in A$ there exists a \emph{retraction} $r_{A,x}:X\to A$ such that $r_{A,x}$ does not increase distances and $r^{-1}_{A,x}(x)=\{x\}$.
 \item[(A6)] If $f,g$ and $h$ are charts such that the associated apartments intersect pairwise in half-apartments then $f(\MS)\cap g(\MS)\cap h(\MS)\neq \emptyset$. 
\end{itemize}
The \emph{dimension} of the building $X$ is $n=\mathrm{rank}(\RS)$, where $\MS\cong \Lambda^n$. 
\end{definition}

Conditions (A1)-(A3) imply the existence of a $\Lambda$--distance on $X$, that is a function $d:X\times X\mapsto \Lambda$ satisfying all conditions of the definition of a $\Lambda$--metric but the triangle inequality.  Given $x,y$ in $X$ fix an apartment containing $x$ and $y$ with chart $f\in\cA$ and let $x',y'$ in $\MS$ be defined by $f(x')=x, f(y')=y$. 
The \emph{distance} $d(x,y)$ between $x$ and $y$ in $X$ is given by $d(x',y')$. By Condition (A2) this is a well-defined function on $X$. Therefore it makes sense to talk about a distance non-increasing function in (A5). Note further that, by (A5), the defined  distance function $d$  satisfies the triangle inequality. Hence $d$ is a metric on $X$.

In the case of $\R$--buildings one has that Condition (A6) follows from the other conditions. This can be found (along with other equivalent definitions for this particular case) in~\cite{Parreau}. One can also define $\R$--buildings in a more geometric way, see~\cite{KleinerLeeb}. There is a paper in preparation by the first author investigating alternative definitions for generalized affine buildings.

%Recently the first author showed that Condition (A5) follows from the other conditions, see~\cite{Schwer} for details. However, in the proofs in the current paper Condition (A5) will be nearly trivial to prove, or we will use another (less involved) result of the same paper~\cite{Schwer} proving (A5).

\subsection{Local and global structure of $\Lambda$--affine buildings}\label{section:lg}

There are two types of spherical buildings associated to an affine $\Lambda$--building $(X,\cA)$ of type $\MS(\RS, \Lambda)$:  the spherical building $\partial_{\cA}X$ at infinity and at each point $x\in X$ a so-called residue $\Delta_xX$. 

Two subsets $\Omega_1,\Omega_2$ of a $\Lambda$--metric space are \emph{parallel} if there exists $N\in\Lambda$ such that for all $x\in \Omega_i$ there exists an $y\in \Omega_j$ such that $d(x,y)\leq N$ for $\{i,j\}=\{1,2\}$.
Note that parallelism is an equivalence relation. One can prove

\begin{prop}{\cite[Section 2.4]{Bennett}}
Let $\MS=\MS(\RS, \Lambda)$ be the model space equipped with the full affine Weyl group $\aW$. Then the following is true.
\begin{enumerate}
\item Two hyperplanes (or two Weyl simplices) are parallel if and only if they are translates of each other by elements of $\aW$.
\item For any two parallel Weyl chambers $S$ and $S'$ there exists a Weyl chamber $S''$ contained in $S\cap S'$ and parallel to both. 
\end{enumerate}
\end{prop}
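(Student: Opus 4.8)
The plan is to argue everything directly from the explicit $\Lambda$--valued metric $d(x,y)=\sum_{\alpha\in\RS^+}\vert\<y-x,\alpha^\vee\>\vert$ on $\MS$, using three elementary observations: (i)~$d$ is invariant under the action of $\aW$ on $\MS$ (re-express the sum over $\RS^+$ as half of the sum over all of $\RS$ for the $\sW$--part, and note $d(x+v,y+v)=d(x,y)$ for translations); (ii)~a non-trivial totally ordered abelian group $\Lambda$ has no upper bound, and for a fixed $c\in F_{>0}$ the products $c\mu$ stay cofinal in $\Lambda$ as $\mu$ ranges cofinally (this is where the $F$--module structure enters); and (iii)~keeping a single summand of $d$ yields the lower bound $d(x,y)\ge\vert\<x-y,\gamma^\vee\>\vert$ for every root $\gamma$. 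For the ``if'' direction of~(1): if $\Omega_2=\Omega_1+v$, then by~(i) the distance $d(x,x+v)$ equals a constant $N$ independent of $x$, so $\Omega_1$ and $\Omega_2$ lie within $N$ of one another and are parallel; since $\aW$ is the \emph{full} affine Weyl group, $t_v\in\aW$, and a translate of a hyperplane (resp.\ a Weyl simplex) is again one. For the ``only if'' direction for hyperplanes: if $H=H_\alpha+p$ and $H'=H_\beta+q$ are parallel but the linear parts $H_\alpha$, $H_\beta$ differ, pick $u\in\mathrm{span}_F(\RS)$ with $\<u,\alpha^\vee\>=0$ and $\<u,\beta^\vee\>\neq0$ (possible precisely because $H_\alpha\neq H_\beta$); then $p+u\otimes\mu\in H$ for all $\mu\in\Lambda$, while $\<p+u\otimes\mu,\beta^\vee\>$ is cofinal by~(ii), so by~(iii) these points lie at unbounded distance from $H'=\{\<\cdot,\beta^\vee\>=\<q,\beta^\vee\>\}$, a contradiction; hence $H_\alpha=H_\beta$ and $H'=H+(q-p)$.

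The ``only if'' direction for Weyl simplices is the substantial case. Applying an element of $\aW$ (legitimate by~(i)), reduce to $S_1=\Cf^{J_1}$, a face of $\Cf$ based at the origin, and write $S_2=\sigma(\Cf^{J_2})+p_2$; the direction cones are then $C_1=\Cf^{J_1}$ and $C_2=\sigma(\Cf^{J_2})$. I would invoke the standard fact about the finite Coxeter arrangement of $\RS$ that each direction cone $C$ equals the intersection of the closed half-spaces $\{\<\cdot,\gamma^\vee\>\le0\}$ over the roots $\gamma$ that are non-positive on all of $C$; consequently $C$ is determined by the finite set $D(C)$ of roots that are positive somewhere on $C$. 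If $C_1\neq C_2$ then $D(C_1)\neq D(C_2)$, so after possibly exchanging $S_1$ and $S_2$ there is a root $\gamma\in D(C_1)\setminus D(C_2)$. There is then an $F$--rational direction $e_0\in\mathrm{span}_F(\RS)$ satisfying the defining conditions of $C_1$ with in addition $\<e_0,\gamma^\vee\> > 0$ (by the Farkas lemma over the ordered field $F$, a $\Lambda$--point of $C_1$ with $\<\cdot,\gamma^\vee\> > 0$ forces an $F$--point with the same strict inequality); scaling $e_0$ by cofinal $\mu\in\Lambda$ produces points of $S_1$ on which $\<\cdot,\gamma^\vee\>$ is cofinal in $\Lambda$, whereas $\<\cdot,\gamma^\vee\>\le\<p_2,\gamma^\vee\>$ on all of $S_2$ since $\gamma\notin D(C_2)$. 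By~(iii) these points have unbounded distance from $S_2$, contradicting parallelism. Hence $C_1=C_2$; writing $C$ for this common cone, $S_2=C+p_2=S_1+p_2$, and undoing the normalization $S_2$ remains a translate of $S_1$ realized inside $\aW$.

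For part~(2): by~(1) write $S'=S+v$ with $v\in\MS$. If $S=x_0+C$ with base point $x_0$ and direction cone $C=\sigma(\Cf)$, then $C+C\subseteq C$ and $C$ has non-empty interior. Choose an $F$--rational interior direction $e_0\in\mathrm{span}_F(\RS)$ of $C$, so that $\<e_0,(\sigma\alpha)^\vee\> > 0$ for every $\alpha\in\RS^+$, and pick $\mu\in\Lambda_{>0}$ larger than each of the finitely many quotients $\<v,(\sigma\alpha)^\vee\>/\<e_0,(\sigma\alpha)^\vee\>$; put $c\define e_0\otimes\mu$. Then $c\in C$ and $c-v\in C$, so $S''\define(x_0+c)+C$ is contained in $S\cap S'$; it equals $t_{x_0+c}\circ\sigma(\Cf)$, hence is a genuine Weyl chamber, and $S''=S+c=S'+(c-v)$ is a translate of both $S$ and $S'$, hence parallel to both by the ``if'' direction of~(1).

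I expect the main obstacle to be the ``only if'' direction of~(1) for Weyl simplices, specifically the interface between the order- and $F$--module structure of $\Lambda$ and the rational combinatorics of $\RS$: one has to be certain that a root functional which is positive \emph{somewhere} on a $\Lambda$--cone actually attains values cofinal in $\Lambda$ there (needing both an $F$--rational interior direction of the cone and the unboundedness of $\Lambda$), and that two distinct direction cones always differ at some single root $\gamma$ that is outward for one of them and bounded above on the other. The remaining pieces — the ``if'' direction, the hyperplane case, and part~(2) — are bookkeeping with the formula for $d$.
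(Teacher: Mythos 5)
Your argument is correct, and there is nothing in the paper to compare it against line by line: the proposition is stated with a citation to Bennett's Section 2.4 and no proof is reproduced here, so your write-up amounts to an independent, self-contained verification in the spirit of the cited source (direct computation in the model space). The two points on which such a proof over a general ordered $F$--module $\Lambda$ can founder are exactly the ones you isolate and handle: (a) unboundedness must be produced by scaling an $F$--rational direction by arbitrary $\mu\in\Lambda$ (since $\N$--multiples or $F$--multiples of a fixed $\Lambda$--point need not be cofinal), and your Farkas-type transfer — ``no $F$--point with $\langle\cdot,\gamma^\vee\rangle>0$ yields a nonnegative certificate, which then bounds all $\Lambda$--points as well'' — is valid because the certificate has coefficients in $F$ and the order is compatible with the $F$--action; equivalently one can argue via the dual basis of fundamental coweights in $\mathrm{span}_F(\RS)$, writing points of a face as nonnegative $\Lambda$--combinations, which avoids invoking duality; and (b) the identification of a direction cone by the set of roots that are somewhere positive on it, which you justify correctly since the faces $\Cf^J$ (and their $\sW$--images) are literally cut out by the root half-spaces that are nonpositive on them. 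The hyperplane case, the ``if'' direction, and part (2) (choosing $\mu$ dominating the finitely many quotients so that $c$ and $c-v$ lie in the direction cone) are all sound bookkeeping with the explicit metric, so I see no gap.
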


A simplex in the \emph{spherical building at infinity} is a parallel class $\partial S$ of a Weyl simplex $S$ in $X$. Hence as a set of simplices
$$
\partial_\cA X =\{\partial S : S \text{ is a Weyl simplex of } X \}.
$$
One simplex $\partial S_1$ is contained in a simplex $\partial S_2$ if there exist representatives $S'_1, S'_2$ which are contained in a common apartment with chart in $\cA$, having the same base point and such that $S'_1$ is contained in $S'_2$.

\begin{prop}
The set $\partial_\cA X$ defined above is a spherical building of type $\RS$ with apartments in a one-to-one correspondence with apartments of $X$.
\end{prop}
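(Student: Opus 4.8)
The plan is to verify the building axioms for $\partial_\cA X$ by importing the apartment and chamber structure from $X$ via the parallelism relation, following the model of classical affine buildings. First I would record that, by the preceding proposition, every Weyl simplex $S$ in $X$ has a well-defined parallel class $\partial S$, and that each apartment $A=f(\MS)$ contains, for each face type of $\Cf$, exactly one parallel class of Weyl simplices of that type based at any prescribed point — more precisely, the Weyl simplices of $A$ fall into finitely many parallel classes, in bijection with the faces of the spherical Weyl complex $\Sigma(\RS,\sW)$. This yields a candidate apartment $\partial A\define\{\partial S: S\subset A \text{ a Weyl simplex}\}$ of $\partial_\cA X$, and the map $A\mapsto\partial A$ is the claimed one-to-one correspondence; injectivity needs that two distinct apartments cannot give the same set of parallel classes at infinity, which follows because an apartment is determined by (any two of) its chambers and the Weyl chambers realizing a full set of asymptotic classes.

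Next I would check the two defining properties of a simplicial complex being a building of type $\RS$: (i) any two simplices of $\partial_\cA X$ lie in a common apartment $\partial A$, and (ii) for any two apartments $\partial A_1,\partial A_2$ there is an isomorphism $\partial A_1\to\partial A_2$ fixing $\partial A_1\cap\partial A_2$ pointwise. For (i), given $\partial S_1,\partial S_2$, choose representatives $S_1,S_2$ in $X$; apply axiom (A4) to get sub-Weyl-chambers $S_1',S_2'$ (whose boundaries are the same faces at infinity, since a sub-Weyl-chamber is parallel to the original) inside a common apartment $f(\MS)$; then $\partial S_1=\partial S_1'$ and $\partial S_2=\partial S_2'$ both lie in $\partial(f(\MS))$. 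Passing from chambers to arbitrary simplices is handled by taking faces. For (ii), given two apartments coming from charts $f,g$, axiom (A2) supplies $w\in\aW$ with $f=g\circ w$ on the overlap; $w$ induces an isomorphism of spherical Weyl complexes which descends to an isomorphism $\partial(f(\MS))\to\partial(g(\MS))$ fixing the common simplices, because parallelism is exactly translation-equivalence (first part of the cited proposition) and $\aW$ acts on parallel classes through its spherical quotient $\sW$. One then checks the type function is preserved, so $\partial_\cA X$ is a chamber complex with a system of apartments satisfying the building axioms; sphericity and the type $\RS$ are inherited from the fact that each $\partial A\cong\Sigma(\RS,\sW)$, the Coxeter complex of the finite group $\sW$.

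Finally I would confirm that the containment relation defined in the text (representatives with the same base point in a common apartment, one contained in the other) makes $\partial_\cA X$ a genuine partially ordered simplicial complex — this is where axiom (A6), the triple-intersection condition, enters: it guarantees that the local "residue" structure at infinity is consistent when three chambers meet pairwise, so that the nerve is well-defined and thick/thin behaviour matches a building rather than a more degenerate complex. The main obstacle I anticipate is (i), the "any two simplices in a common apartment at infinity" axiom: translating (A4), which only promises \emph{sub}-Weyl-chambers in a common apartment of $X$, into a statement about \emph{exact} parallel classes requires carefully using that a Weyl chamber and any of its sub-Weyl-chambers are parallel (hence have equal boundary), and then propagating this from chambers down to all faces; the bookkeeping of base points in the definition of the face relation is the delicate point. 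The remaining axioms are comparatively mechanical once the dictionary "parallel class $\leftrightarrow$ simplex at infinity, $\aW$-action $\leftrightarrow$ $\sW$-action" is set up.
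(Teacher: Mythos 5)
Your overall dictionary (parallel classes $\leftrightarrow$ simplices at infinity, $\partial A\cong$ Coxeter complex of $\sW$, axiom (A4) plus passage to faces for ``any two simplices lie in a common apartment at infinity'') matches the standard argument, and the paper itself only cites \cite[3.6]{BennettThesis} and \cite[5.7]{PetraThesis} rather than proving the statement. However, your verification of the second building axiom has a genuine gap. You propose to get the isomorphism $\partial A_1\to\partial A_2$ fixing $\partial A_1\cap\partial A_2$ from axiom (A2): the element $w\in\aW$ with $f=g\circ w$ on $f^{-1}(g(\MS))$. But (A2) only controls the intersection $A_1\cap A_2$ \emph{inside} $X$, and the intersection at infinity is in general strictly larger than $\partial(A_1\cap A_2)$: two apartments can share simplices at infinity while meeting in a set whose boundary does not see them, and they can even be disjoint in $X$ yet have common simplices at infinity (already in rank $2$, e.g.\ in a product of two trees, take $L_1\times L_2$ and $L_1'\times L_2'$ with $L_1,L_1'$ sharing a ray and $L_2,L_2'$ disjoint). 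So the map induced by $w$ is only guaranteed to fix the parallel classes of Weyl simplices contained in $A_1\cap A_2$, not all of $\partial A_1\cap\partial A_2$, and when $A_1\cap A_2=\emptyset$ you get no map at all. This is precisely the hard part of the theorem, and in the cited proofs it is handled not by (A2) but by sector/germ-based tools: statements such as Proposition~\ref{prop:tec} (an apartment containing a germ of a given Weyl chamber and a prescribed chamber at infinity), the retractions $r_{A,\mu}$ of Definition~\ref{Def_vertexRetraction}, the fact that parallel Weyl chambers contain a common sub-Weyl chamber, and the uniqueness of the apartment spanned by two opposite chambers (item~\ref{list:5.23} of Proposition~\ref{prop:list}).

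Two smaller points. First, your injectivity argument for $A\mapsto\partial A$ (``an apartment is determined by its chambers at infinity'') is exactly the opposite-chambers uniqueness statement above; it is true but needs that result, not just bookkeeping. Second, your closing appeal to (A6) to make the face relation ``consistent'' is misplaced: the partial order on $\partial_\cA X$ is handled by compatibility of parallelism with faces (if $S'\subset A$ is parallel to $S$ and $T$ is a face of $S$, some face of $S'$ is parallel to $T$), while (A6)-type input, where needed, again feeds into the apartment-gluing axiom rather than into well-definedness of the complex. As it stands, your plan would go through only after replacing the (A2) step by a retraction/opposite-chamber argument along the lines of the cited references.
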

\begin{proof}
See \cite[3.6]{BennettThesis} or \cite[5.7]{PetraThesis}.
\end{proof}

To define a second type of equivalence relation on Weyl simplices we say that two of them, $S$ and $S'$, \emph{share the same germ} if both are based at the same point and if $S\cap S'$ is a neighborhood of $x$ in $S$ and in $S'$. It is easy to see that this is an equivalence relation on the set of Weyl simplices based at a given point. The equivalence class of $S$, based at $x$, is denoted by $\Delta_x S$ and is called \emph{germ of $S$ at $x$}.
The germs of Weyl simplices based at a point $x$ are partially ordered by inclusion: $\Delta_x S_1\subset \Delta_xS_2$ if there exist representatives $S'_1, S'_2$ contained in a common apartment such that $S_1'$ is a face of $S_2'$. Let $\Delta_xX$ be the set of all germs of Weyl simplices based at $x$. Then

\begin{prop}{\cite[5.17]{PetraThesis}}\label{prop:exis_res}
For all $x\in X$ the set $\Delta_xX$ is a spherical building of type $\RS$ which is independent of $\cA$.
\end{prop}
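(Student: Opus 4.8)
The plan is to present $\Delta_xX$, together with the collection of subcomplexes $\Delta_xA\define\{\Delta_xS:S\subseteq A\text{ a Weyl simplex based at }x\}$, where $A$ runs over the apartments of $X$ that contain $x$, as a building of type $\RS$ in the sense of Tits; sphericity and independence of $\cA$ then follow with little extra work. First I would record that $\Delta_xX$ is a simplicial complex: each germ $\Delta_xS$ is labelled by the type (in the basis $B$) of the face of $\Cf$ to which the direction of $S$ corresponds, the face relation on germs restricts faces, and the germ of the Weyl simplex $\{x\}$ plays the role of the empty simplex; so $\Delta_xX$ is an $\RS$-labelled complex and each $\Delta_xA$ is a labelled subcomplex.

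Next I would show that each $\Delta_xA$ is a Coxeter complex of type $\RS$. Pulling back along a chart $f$ of $A$ with $f^{-1}(x)=a$, this becomes a statement in the model space: a Weyl simplex of $\MS$ based at $a$ is precisely the translate by $a$ of a face of a chamber of $\sW$, and a neighbourhood of $a$ inside it remembers exactly that face, so the germs at $a$ of Weyl simplices based at $a$ are in type- and order-preserving bijection with the faces of the spherical Coxeter complex of $\RS$. Charts of two apartments through $x$ agree on their overlap up to an element $w\in\aW$ by (A2); when the overlap contains a neighbourhood of $x$, $w$ fixes $a$ and hence lies in the point stabiliser, a copy of $\sW$, so the identifications are coherent and $\Delta_xA$ is a Coxeter complex of type $\RS$. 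The same use of (A2) supplies the compatibility axiom: if two apartments $\Delta_xA$ and $\Delta_xA'$ share two germs, the transition chart restricts near $x$ to an element of $\sW$, hence to an isomorphism $\Delta_xA\to\Delta_xA'$ fixing both germs.

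The heart of the proof is the remaining axiom: any two germs $\Delta_xS_1,\Delta_xS_2$ lie in a common $\Delta_xA$. Extending the representatives to Weyl chambers $C_1,C_2$ based at $x$ (each Weyl simplex based at $x$ is a face of such a chamber), it is enough to place $\Delta_xC_1$ and $\Delta_xC_2$ inside one apartment through $x$. I would do this by passing to the building at infinity: the classes $\partial C_1$ and $\partial C_2$ are chambers of the spherical building $\partial_\cA X$, so they lie in a common apartment $\partial A_0$ of $\partial_\cA X$; this apartment at infinity is induced by an apartment $A$ of $X$ that can be chosen to contain $x$, and since the Weyl chamber based at $x$ in a prescribed class at infinity is unique (equivalently, two parallel Weyl chambers with the same base point coincide), the sectors of $A$ based at $x$ in the classes $\partial C_1$ and $\partial C_2$ are $C_1$ and $C_2$; hence $C_1,C_2\subseteq A$ and $\Delta_xC_1,\Delta_xC_2\in\Delta_xA$. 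The hard part is exactly the two auxiliary facts invoked here: uniqueness of the sector based at a point in a given class at infinity, and existence of an apartment through a prescribed point that induces a prescribed apartment of $\partial_\cA X$ (obtained by amalgamating the sectors based at $x$ in its chambers). These are what require genuine work with the axioms (A3)--(A6) and the parallelism statements recalled above, and I expect them to be the main obstacle.

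Finally, since $\RS$ is spherical the group $\sW$ is finite, so the Coxeter complexes $\Delta_xA$, and therefore $\Delta_xX$, are spherical; thus $\Delta_xX$ is a spherical building of type $\RS$. For independence of $\cA$, enlarging the apartment system a priori produces more germs at $x$, but every Weyl chamber based at $x$ defined over the larger system has a class at infinity that is also realised over $\cA$ (the building at infinity being insensitive to the apartment system by a parallel argument), so by the uniqueness of sectors based at $x$ it already shares its germ with a Weyl chamber defined over $\cA$; together with the fact that the face relation among germs is detected inside $\cA$ by the argument above, this shows that $\Delta_xX$ and its building structure are the same for every choice of $\cA$, and in particular may be computed in the complete apartment system.
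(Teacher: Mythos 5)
The paper itself does not prove this proposition (it cites \cite[5.17]{PetraThesis}), so your attempt has to stand on its own, and its overall skeleton (each $\Delta_xA$ is a Coxeter complex of type $\RS$ via a chart and (A2); reduce the building axiom to germs of Weyl chambers; finiteness of $\sW$ gives sphericity) is the standard and correct one. But the crucial step and the independence argument each rest on a claim that is false. You assert that the apartment $\partial A_0$ of $\partial_\cA X$ containing $\partial C_1$ and $\partial C_2$ ``is induced by an apartment $A$ of $X$ that can be chosen to contain $x$'', i.e.\ you invoke ``existence of an apartment through a prescribed point inducing a prescribed apartment of $\partial_\cA X$''. Apartments of $X$ are in one-to-one correspondence with apartments of $\partial_\cA X$, and the apartment inducing a given boundary apartment need not pass through $x$; there is no choice to be made. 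Already in rank one this fails: take two rays $C_1,C_2$ based at $x$ which agree near $x$ and branch further away. Then $\partial C_1\neq\partial C_2$, the unique apartment of $X$ with these two ends misses $x$, and no apartment through $x$ can contain both ends (its two $x$-based rays would have distinct germs, contradicting $\Delta_xC_1=\Delta_xC_2$). So your intermediate claim, and with it the assertion that ``the sectors of $A$ based at $x$ in the classes $\partial C_1,\partial C_2$ are $C_1,C_2$'', is simply wrong in this case. What is true, and is the actual content one must prove, is the weaker Proposition~\ref{prop:tec}: given the Weyl chamber $C_2$ based at $x$ and the single chamber $c=\partial C_1$ at infinity, there is an apartment containing a germ of $C_2$ at $x$ and $c$ in its boundary; combined with the lemma that parallel Weyl chambers with the same base point coincide (which you correctly flag as needing proof), this apartment contains both germs. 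This is exactly how the paper argues in the analogous Lemma~\ref{lemma:inap}, and it cannot be replaced by ``prescribing'' the whole apartment at infinity.

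The independence-of-$\cA$ paragraph has a second gap of the same nature: it relies on ``the building at infinity being insensitive to the apartment system''. That is false in general -- this is precisely why the boundary carries the subscript $\cA$. Enlarging the atlas can create Weyl chambers whose parallel class at infinity is not realised over $\cA$ at all (think of the tree of $\mathsf{SL}_2$ over a non-complete valued field: an end outside the ``algebraic'' boundary), even though their germ at $x$ is already an $\cA$-germ because a small neighbourhood of $x$ in such a chamber lies in an $\cA$-apartment. So the route ``class at infinity realised over $\cA$ plus uniqueness of based sectors'' does not establish independence; the comparison of atlases has to be made locally at $x$ (showing directly that every germ with respect to a larger atlas is already a germ with respect to $\cA$), not through $\partial_\cA X$.
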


Let $\mu$ be a germ of a Weyl simplex $S$ based at $x$. We say that \emph{$\mu$ is contained in a set $K$ } if there exists an $\varepsilon > 0$ in $\Lambda$ such that $B_\varepsilon(x) \cap S$ is contained in $K$.

The following properties will be of use in subsequent proofs of the present paper.

\begin{prop}\label{prop:list}
Let $(X,\cA)$ be an affine building of type $\MS(\RS,\Lambda)$. Then: 
\begin{enumerate}
  \item \label{list:5.23} Let $S$ and $T$ be two Weyl chambers based at the same point $x$. If their germs are opposite in $\Delta_xX$ then there exists a unique apartment containing $S$ and $T$.
%  \item \label{list:tec17} For any chamber $c\in\partial_{\cA} X$ the affine building $X$ is, as a set, the union of all apartments containing a representative of $c$. 
  \item \label{list:tec18} For any germ $\mu\in X$ the affine building $X$ is, as a set, the union of all apartments containing $\mu$.
\end{enumerate}
\end{prop}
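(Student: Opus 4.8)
The plan is to prove the two assertions in turn, establishing \ref{list:5.23} first and then deriving \ref{list:tec18} from it via the spherical building structure on the residue $\Delta_xX$. Throughout I will use without further comment that charts carry Weyl chambers to Weyl chambers and preserve germs and parallelism, that the chambers of $\Delta_xX$ are precisely the germs $\Delta_xR$ of Weyl chambers $R$ of $X$ based at $x$, and that for an apartment $A\ni x$ the germ $\Delta_xA$ is an apartment of $\Delta_xX$ — all standard from the construction of $\Delta_xX$ (Proposition~\ref{prop:exis_res}).

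For \ref{list:5.23}, uniqueness is a short convexity argument: if $A_1=f_1(\MS)$ and $A_2$ both contain $S\cup T$, then by (A2) the set $f_1^{-1}(A_1\cap A_2)$ is a convex subset of $\MS$ containing the two Weyl chambers $f_1^{-1}(S)$ and $f_1^{-1}(T)$, which are based at the common point $f_1^{-1}(x)$ with opposite germs there. Because $\Lambda$ is divisible, the convex hull of the union of two opposite Weyl chambers based at a point of $\MS$ is already all of $\MS$ — the midpoints of pairs of points, one from each chamber, sweep out everything, the difference $\overline{C}-\overline{C}$ of a full-dimensional closed Weyl cone with itself being the whole space. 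Hence $f_1^{-1}(A_1\cap A_2)=\MS$, so $A_1\subseteq A_2$, and by symmetry $A_1=A_2$. For existence I would start from (A4) applied to $S$ and $T$, producing sub-Weyl chambers $S_1\subseteq S$, $T_1\subseteq T$ inside a common apartment $A_0$; sub-Weyl chambers being parallel to the chambers containing them, $S_1$ and $T_1$ point in opposite Weyl directions inside $A_0$. It then remains to show that $A_0$ in fact contains the base point $x$, and hence, being a full copy of the model space, the whole of $S$ and of $T$; this one does by realizing $x$ as the unique common point of the bounding hyperplanes of a sub-chamber of $S_1$, intersecting $A_0$ with apartments carrying those hyperplanes, and combining (A6) with the convexity part of (A2). \emph{This last step — showing the apartment ``reaches back'' to $x$, i.e.\ controlling how two apartments of $X$ may diverge away from $S$ and $T$ — is the main obstacle}, and is the point genuinely carried out in \cite{Bennett,PetraThesis}.

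For \ref{list:tec18}, the inclusion $\supseteq$ is trivial, so fix $y\in X$; I must find an apartment containing $\mu$ and $y$. Choose a Weyl chamber $S$ based at $x$ representing $\mu$. By (A3) there is an apartment $A_1\ni x,y$, and inside $A_1\cong\MS$ one finds a Weyl chamber $T$ based at $x$ with $y\in T$. In the spherical building $\Delta_xX$ the chambers $\mu=\Delta_xS$ and $\Delta_xT$ lie in a common apartment $\Sigma$ (any two chambers of a spherical building do); let $\nu$ be the chamber of $\Sigma$ opposite $\Delta_xT$, and lift it to a Weyl chamber $R$ of $X$ based at $x$ with $\Delta_xR=\nu$. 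Then $R$ and $T$ have opposite germs at $x$, so by \ref{list:5.23} they lie in a unique common apartment $A$. Now $\Delta_xA$ is an apartment of $\Delta_xX$ containing the opposite pair $\{\nu,\Delta_xT\}$, and an opposite pair in a spherical building lies in a unique apartment, so $\Delta_xA=\Sigma$; in particular $\mu\in\Delta_xA$, which means $A$ contains a Weyl chamber based at $x$ with germ $\mu$, i.e.\ $\mu$ is contained in $A$. Since also $y\in T\subseteq A$, the apartment $A$ contains both $\mu$ and $y$, as required.
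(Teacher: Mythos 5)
The paper gives no argument for this proposition at all: its ``proof'' is the single line referring to 5.23 and 5.13 of \cite{PetraThesis}, so there is no internal proof to compare you against, and your proposal has to be judged on its own. Two of your three pieces are correct. The uniqueness half of \ref{list:5.23} works: by (A2) the set $f_1^{-1}(A_1\cap A_2)$ is a finite intersection of half-apartments containing two opposite Weyl chambers based at one point, and no half-apartment $\{y:\langle y,\alpha^\vee\rangle\geq k\}$ can contain such a pair (boundedness below on a full Weyl cone forces $\langle\cdot,\alpha^\vee\rangle\geq 0$ on its direction cone, and then the functional is unbounded below on the opposite cone), so the intersection is all of $\MS$. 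Your midpoint argument also goes through, but only after the remark that ``convex'' in this paper means a finite intersection of half-apartments and that such sets are midpoint-closed because $\Lambda$ is an $F$--module; as written you are silently using a different notion of convexity. Your deduction of \ref{list:tec18} from \ref{list:5.23} through the residue $\Delta_xX$ is correct (and, note, probably not the route of the cited source, where the analogue of item \ref{list:tec18} precedes that of item \ref{list:5.23}): the points to make explicit are that $\Delta_xA$ is an apartment of $\Delta_xX$ for every apartment $A\ni x$, that an opposite pair of chambers of a spherical building lies in a unique apartment, and that a germ of a lower-dimensional Weyl simplex is handled by passing to a chamber germ of $\Sigma$ containing it; all of these are harmless.

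The genuine gap is the existence half of \ref{list:5.23}, and you say so yourself. Your reduction is fine as far as it goes: (A4) gives sub-Weyl chambers $S_1\subset S$, $T_1\subset T$ in a common apartment $A_0$, and if $x\in A_0$ then $A_0\supset S\cup T$, since any half-apartment containing $x$ and a subchamber of $S$ contains $S$, so the convex set $A_0\cap A$ (for $A$ an apartment containing $x$ and $S$) contains $S$. But the crucial step, that $A_0$ can be chosen (or corrected) so as to contain the base point $x$, is only gestured at: ``realizing $x$ as the common point of bounding hyperplanes, intersecting $A_0$ with apartments carrying those hyperplanes, and combining (A6) with (A2)'' is not an argument — nothing provides apartments through those hyperplanes meeting $A_0$ in half-apartments, and (A6) only yields a nonempty triple intersection, not the point $x$. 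That step is exactly the content of \cite[5.23]{PetraThesis} and of \cite{Bennett}. So your proposal, like the paper, ultimately rests on the citation; as a self-contained proof it is incomplete precisely at the one place where the real work lies.
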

\begin{proof}
See 5.23 and 5.13 of \cite{PetraThesis} for a proof.
%For the proof of properties 1. - 3. compare (in this order) 5.23, 5.12 and 5.13 in \cite{PetraThesis}.
\end{proof}

The proof of the following proposition is the same as of Proposition 1.8 in \cite{Parreau}. A consequence of it is that given a point in $X$ and parallel class of Weyl simplices, there is a unique Weyl simplex in this class based at the given point.

\begin{prop}\label{prop:tec}
Let $(X,\cA)$ be an affine building and $c$ a chamber in $\partial_\cA X$. For a Weyl chamber $S$ based at a point $x\in X$ there exists an apartment $A$ with chart in $\cA$ containing a germ of $S$ at $x$ and such that $c$ is contained in the boundary $\partial A$.
\end{prop}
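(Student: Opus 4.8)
The plan is to imitate the proof of Parreau's Proposition 1.8, working entirely with the combinatorics of germs, the spherical building at infinity, and apartment systems. Fix the chamber $c$ in $\partial_\cA X$ and the Weyl chamber $S$ based at $x$, and write $\mu \define \Delta_x S$ for its germ at $x$. First I would recall that by Proposition \ref{prop:list}\eqref{list:tec18} the building $X$ is the union of all apartments containing $\mu$; in particular there is at least one apartment $A_0$ (with chart in $\cA$) whose boundary $\partial A_0$ contains $\mu$. The boundary $\partial A_0$ is an apartment of the spherical building $\partial_\cA X$, so it contains a chamber $c_0$. If $c_0 = c$ we are done, so assume $c_0 \neq c$.

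Next I would use the local-to-global passage through the residue $\Delta_x X$. The germ $\mu$ determines, via the directions of the Weyl chambers based at $x$ that share the germ $\mu$ as a face, a simplex $\bar\mu$ in the spherical building $\Delta_x X$. The chamber $c$ at infinity, together with the point $x$, determines a Weyl chamber germ pointing ``towards $c$'': concretely, one takes any apartment $A_1$ with $c \subset \partial A_1$ (such exists since $c$ is a chamber of $\partial_\cA X$ and apartments of $\partial_\cA X$ correspond to apartments of $X$), and inside $A_1$ one finds the unique Weyl chamber based at $x$ whose boundary simplex at infinity is $c$; its germ gives a chamber germ $\nu$ in $\Delta_x X$. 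Now I want a chamber germ $\nu'$ in $\Delta_x X$ that simultaneously (i) has $\bar\mu$ as a face, so that $S$'s germ is a face of the corresponding Weyl chamber based at $x$, and (ii) is opposite in $\Delta_x X$ to some chamber germ lying in an apartment whose boundary contains $c$. Since $\Delta_x X$ is a (thick, in the interesting case) spherical building, the residue of $\bar\mu$ contains a chamber germ $\nu'$ with $\bar\mu$ as a face, and among the apartments of $\Delta_x X$ through $\nu'$ one can choose one that also contains the germ $\nu$ determined by $c$ — this is exactly the standard fact that in a spherical building any two simplices lie in a common apartment, applied in the residue $\Delta_x X$.

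Having chosen such an apartment $\bar{\mathcal{E}}$ of $\Delta_x X$, pick within it a chamber germ $\nu''$ opposite to a chamber germ $\sigma$ with $\bar\mu$ a face of $\sigma$. By Proposition \ref{prop:list}\eqref{list:5.23}, the two Weyl chambers based at $x$ realizing $\sigma$ and $\nu''$ (with opposite germs) lie in a unique apartment $A$ with chart in $\cA$; since $\bar\mu$ is a face of $\sigma$, the apartment $A$ contains a representative of the germ $\mu$, i.e. it contains a germ of $S$ at $x$. It remains to arrange that $c \subset \partial A$. Here I would be more careful in the choice made in the previous paragraph: choose the apartment $\bar{\mathcal{E}}$ of $\Delta_x X$ so that it contains both $\sigma$ (hence $\bar\mu$) and the chamber germ $\nu$ coming from $c$, and take $\nu''$ to be the chamber germ of $\bar{\mathcal{E}}$ opposite $\sigma$; then the apartment $A$ built from $\sigma$ and $\nu''$ has the property that its germ-apartment at $x$ is precisely $\bar{\mathcal{E}}$, so it contains the Weyl chamber germ $\nu$ pointing toward $c$. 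Propagating this Weyl chamber inside $A$ to its full extent, its class at infinity is a chamber of $\partial A$; by construction this chamber is $c$ (both are the direction class determined by $\nu$ inside the apartment $A$), so $c \subset \partial A$, as required.

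The main obstacle I expect is the bookkeeping in the third paragraph: making sure that the chamber germ $\nu$ ``pointing toward $c$'' at $x$ is well defined and that the apartment $A$ reconstructed from a pair of opposite germs in $\Delta_x X$ actually has $c$ (and not merely some chamber sharing a face with $c$) in its boundary. This requires knowing that inside a fixed apartment $A$ of $X$ the map sending a Weyl chamber germ at $x$ to its parallel class at infinity is a bijection between the chambers of the spherical apartment $\Delta_x X \cap A$ and the chambers of $\partial A$ — which follows from the structure of the model space $\MS(\RS,\Lambda)$, where germs at a point and directions at infinity are both governed by the same finite Weyl group $\sW$ acting on $\mathrm{span}_F(\RS)$. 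Once that identification is in hand, the argument reduces to two applications of ``two simplices lie in a common apartment'' in the spherical buildings $\Delta_x X$ and $\partial_\cA X$, plus the uniqueness statement Proposition \ref{prop:list}\eqref{list:5.23}; everything else is routine, exactly paralleling \cite[Proposition 1.8]{Parreau}.
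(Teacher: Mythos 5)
There is a genuine gap, and it sits exactly at the crux of the proposition. Your construction remembers the chamber $c$ only through the germ $\nu$ at $x$ of one sector pointing to $c$, and in the final step you infer that, because the apartment $A$ (obtained from Proposition~\ref{prop:list}(\ref{list:5.23}) applied to realizations of $\sigma$ and $\nu''$) contains a Weyl chamber germ equal to $\nu$, its boundary must contain $c$. That inference is false: a germ at $x$ does not determine a chamber at infinity. Already for $\Lambda=\R$ in rank one (trees), two rays emanating from $x$ may share their germ at $x$ and yet have different ends; the full Weyl chamber of $A$ based at $x$ that extends $\nu$ inside $A$ need not be the sector $Q\subset A_1$ you used to define $\nu$, and its class at infinity can be any chamber that ``branches off'' from $Q$ outside a neighbourhood of $x$. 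The bijection you appeal to in your last paragraph (chamber germs at $x$ inside a fixed apartment $\leftrightarrow$ chambers of the boundary of that apartment) is true, but it depends on the apartment: matching the local datum $\nu$ with the datum $c$ at infinity inside one and the same apartment is precisely what the proposition asserts, so the argument assumes what is to be proved. A purely local reduction to $\Delta_xX$ cannot work, because passing to germs discards the position of $c$.

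There is a second, smaller, gap at the start: to define $\nu$ you take ``the unique Weyl chamber based at $x$'' inside an apartment $A_1$ with $c\subset\partial A_1$, but $x$ need not lie in $A_1$. The existence of a sector based at a prescribed point $x$ in the class $c$ (equivalently, of an apartment through $x$ whose boundary contains $c$) is not among the facts quoted in the preliminaries and is normally deduced from Proposition~\ref{prop:tec} itself, so invoking it here is circular unless you prove it independently. The paper's own proof avoids both problems by transporting Parreau's argument for \cite[Proposition 1.8]{Parreau}, which is global in nature: roughly, it uses (A4) to put a subsector of $S$ and a sector in the class $c$ into a common apartment and then works back towards $x$ while keeping the chamber $c$ at infinity fixed throughout, rather than encoding $c$ by a germ at $x$. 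Any repair of your approach needs an ingredient of this kind that controls a neighbourhood of $x$ and the chamber at infinity simultaneously; that ingredient is what is missing.
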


Given a germ of a Weyl chamber in a fixed apartment $A$ one can define a retraction of the building onto $A$ as follows.

\begin{definition}\label{Def_vertexRetraction}
Fix a germ $\mu$ of a Weyl chamber in $X$.
Given a point $x$ in $X$ let $g$ be a chart in $\cA$ such that $x$ and $\mu$ are contained in  $g(\MS)$. Define 
$$
 r_{A,\mu}(x) \define (f\circ w\circ g^{-1} )(x)  
$$
where $w\in\aW$ is such that $g\vert_{g^{-1}(f(\MS))}=(f\circ w)\vert_{g^{-1}(f(\MS))}$.
The map $r_{A,\mu}$ is called the \emph{retraction onto $A$ centered at $\mu$}.
\end{definition}

By Condition (A2) and item \ref{list:tec18} of Proposition~\ref{prop:list} this retraction is well-defined. Furthermore, as proved in Appendix C of \cite{PetraThesis}, it is distance non-increasing. Furthermore, the restriction of $r_{A,\mu}$ to an apartment containing $\mu$ is an isomorphism onto $A$.

We end these preliminaries by pointing out that our main results (in particular Main Result~\ref{theorem:secondmain}, part~\ref{item:mr22}) allow for more spherical buildings to be defined from a $\Lambda$--building than the two constructions mentioned in this section. In fact, one can associate a spherical building to each set of points with distance in a convex subgroup $\Lambda'$ of $\Lambda$ from a certain point of $X$. The spherical building at infinity and the residues correspond to the choices $\Lambda'=\Lambda$ and $\Lambda' = \{0\}$.

% %%%%%%%%%%%%%%%%%%%%%%%%%%

% %%%%%%%%%%%%%%%%%%%%%%%%%%
% Proof of first main result
\section{Proof of the first Main Result}\label{section:firstMain}

Given an epimorphism $e:\Lambda\to\Lambda'$ of ordered abelian $\Q [\{\alpha^\vee(\beta)\}_{\alpha,\beta\in\RS}\}]$--modules $\Lambda$ and $\Lambda'$, we define the \emph{base change functor} as follows.
Let $(X,\cA)$ be an affine building with model space $\MS(\RS,\Lambda)$ (or shortly $\MS$) and distance function $d$ which is induced by the standard distance on the model space.

The relation ``$\sim$'' on $X$ with $x\sim y$ when $d(x,y)\in\ker(e)$ is an equivalence relation (due to the triangle inequality). Let $X'$ be the quotient of $X$ defined by this equivalence relation. The associated quotient map $\phi:X\to X'$ is surjective by definition. One can define a metric $d'$ on $X'$ by putting $d'(\phi(x),\phi(y))\define e(d(x,y))$. This metric is well-defined due to the triangle inequality, one also easily checks it is indeed a metric. Let $\MS'$ be the model space $\MS(\RS,\Lambda')$ and $W'$ the associated affine Weyl group. In the same way as for $X$ one can define a map $\phi_\MS$ from the model space $\MS$ to $\MS'$. For each chart $f \in \MS$ one has that the preimages of the maps $\phi \circ f$ and $\phi_\MS$ on $\MS$ are the same. Hence one can define an injective map $f': \MS \to X'$ such that $\phi \circ f$ equals $f' \circ \phi_\MS$.

%Because the preimages of points under $\phi$ and $\phi_\MS$ are the same, one can define for each chart $f \in \cA$, an injective map $f'$ such that $\phi \circ f$ equals $f' \circ \phi_\MS$. 

This way we have defined a set of charts $\cA'$ from $\MS'$ into $X'$. Automatically we also have defined (half-)apartments, hyperplanes, Weyl chambers, \dots~in $X'$. By construction these objects are the images under $\phi$ of similar objects in $X$.

%The quotient of the Weyl group of $X$ is the Weyl group of $X'$.
Conditions (A1) and (A3)-(A5) for $(X',\cA')$ are easy consequences of the fact that these conditions are already satisfied by $(X,\cA)$. The only non-trivial condition to check is Condition (A2). This turns out to be particularly difficult when two non-intersecting apartments intersect after applying $\phi$. Condition (A6) follows as a byproduct of the proof of (A2).

The outline of the proof is the following. We start with investigating images of pairs of already intersecting convex sets (Lemma~\ref{lemma:compare}). This will imply that for two already intersecting apartments nothing surprising happens (Lemma~\ref{cor:A2part}). 

The next step is then to investigate local structures, i.e. the residues. The easier case of already intersecting apartments will be sufficient to show that these local structures are spherical buildings (Lemma~\ref{lemma:spheric}). Condition (A6) follows from this case as well. The results we obtain in this part are also useful to prove the second Main Result later on.  

This local information eventually allows us to prove (A2) in full generality (Lemma~\ref{lemma:a2general}). After this we end by showing functoriality.

\subsection{Intersecting convex sets}

In this section we study how already intersecting convex sets behave under the map $\phi$. These lemmas will be used later on to investigate the local structure of the quotient space $(X', \cA')$ and in the proof of Condition (A2).

\begin{lemma}\label{lemma:inker}
Let $x$ and $x'$ be two points of the model space $\MS$ lying in respectively two Weyl simplices $S$ and $S'$ both based at some point $y$. Suppose that $d(x,x') \in \ker e$. Then if $S$ and $S'$ do not have Weyl simplices in common, other than the base point, one has that $d(x,y),d(x',y) \in \ker e$.
\end{lemma}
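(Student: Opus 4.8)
The plan is to work entirely inside the model space $\MS = \MS(\RS,\Lambda)$, which is an $F$-vector space with the $\aW$-invariant distance $d(u,v) = \sum_{\alpha\in\RS^+}|\langle v-u,\alpha^\vee\rangle|$. After translating so that the common base point $y$ is the origin $o$, the two Weyl simplices $S$ and $S'$ become faces of Weyl chambers based at $o$, and the hypothesis that $S\cap S' = \{o\}$ says (passing to the associated simplices in the spherical building at infinity, or equivalently in the link of $o$) that the germs $\Delta_o S$ and $\Delta_o S'$ are disjoint simplices of the spherical Coxeter complex on $\MS$. The key structural fact I would extract is that on the cone spanned by $S$ and on the cone spanned by $S'$, the distance to the origin is an $F$-linear function: for $x\in S$ one has $d(x,o)=\sum_{\alpha}|\langle x,\alpha^\vee\rangle| = \langle x, \rho_S\rangle$ for a suitable covector $\rho_S = \sum_{\alpha}\pm\alpha^\vee$ whose signs are constant on the Weyl simplex $S$ (and similarly $\rho_{S'}$ on $S'$). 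The point of the disjointness hypothesis is exactly that these sign patterns are "incompatible": $S$ and $S'$ lie in the closures of opposite-enough chambers so that $\rho_S$ and $\rho_{S'}$ differ on the relevant directions.

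The heart of the argument is then the following linear-algebra/combinatorial claim: under the hypothesis on $S,S'$, one has $d(x,x') = d(x,o) + d(x',o)$ for $x\in S$, $x'\in S'$; i.e. $o$ lies "metrically between" $x$ and $x'$. Granting this, $d(x,x')\in\ker e$ forces $e(d(x,o)) + e(d(x',o)) = 0$ in $\Lambda'$; since $e$ is a morphism of \emph{ordered} abelian groups and $d(x,o), d(x',o)\ge 0$, both images $e(d(x,o))$ and $e(d(x',o))$ are $\ge 0$, and a sum of two nonnegative elements of an ordered abelian group vanishes only if each summand vanishes. Hence $d(x,o), d(x',o)\in\ker e$, which is the assertion (after translating back, $d(x,y)$ and $d(x',y)$). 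So the whole lemma reduces to the additivity statement $d(x,x') = d(x,o)+d(x',o)$.

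To prove that additivity I would argue termwise over the positive roots $\alpha\in\RS^+$: for each $\alpha$ one needs $|\langle x' - x,\alpha^\vee\rangle| = |\langle x,\alpha^\vee\rangle| + |\langle x',\alpha^\vee\rangle|$, which holds precisely when $\langle x,\alpha^\vee\rangle$ and $\langle x',\alpha^\vee\rangle$ are not both strictly positive and not both strictly negative — i.e. they have "opposite or zero" signs. So I must show: no positive root $\alpha$ takes strictly positive (resp. strictly negative) values simultaneously on $x\in S$ and on $x'\in S'$. This is where the hypothesis "$S$ and $S'$ share no Weyl simplex other than $\{o\}$" enters. If some $\alpha$ were positive on both, then the half-apartment-type condition $\{u : \langle u,\alpha^\vee\rangle \ge 0\}$ would be a wall-or-halfspace compatible with both germs; the correct way to phrase it is via the link of $o$: $\Delta_o S$ and $\Delta_o S'$ are subsets of a common sphere triangulated by the hyperplanes $H_\beta$, and the condition that they share only the vertex $o$ means no closed chamber (in the link) contains faces of both. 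I would then invoke that two faces of a Coxeter complex lie in a common chamber iff no wall strictly separates neither — more usefully, that for \emph{disjoint} faces $F_1, F_2$ there is a wall $H_\alpha$ with $F_1$ on one closed side and $F_2$ on the other closed side, with at least one of them off the wall; running this over all such separating roots handles each term. The sign bookkeeping — translating "disjoint faces of the spherical complex" into "for every positive root $\alpha$, $\langle x,\alpha^\vee\rangle$ and $\langle x',\alpha^\vee\rangle$ are not both of the same strict sign" — is the step I expect to be the genuine obstacle, since one must be careful that it is enough to separate by \emph{positive} roots and that the strictness is handled correctly on the boundary faces (where some $\langle\cdot,\alpha^\vee\rangle$ vanish, which only helps). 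Once that combinatorial dictionary is in place, the termwise identity, the additivity, and the final ordered-group argument are all immediate.
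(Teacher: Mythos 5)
Your reduction to the ordered-group statement at the end is fine, but the central claim it rests on --- that $d(x,x') = d(x,y)+d(x',y)$ whenever $S$ and $S'$ share no Weyl simplex besides the base point --- is false, and so is the per-root sign claim you use to prove it. Take $\RS$ of type $A_2$ with simple roots $\alpha,\beta$, let $y=o$, let $S$ be the (closed) fundamental Weyl chamber and $S'$ the chamber $r_\beta r_\alpha S$ obtained by rotating it by $120^\circ$. These two chambers meet only in $o$, so the hypothesis of the lemma is satisfied; yet the positive root $\beta$ is strictly positive on the interiors of both cones (e.g.\ on $x$ in the wall of $S$ contained in $H_\alpha$ and $x'$ in the wall of $S'$ contained in $H_{\alpha+\beta}$ one even gets $\langle x,\beta^\vee\rangle>0$ and $\langle x',\beta^\vee\rangle>0$). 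Hence the $\beta$-term satisfies $|\langle x'-x,\beta^\vee\rangle| < |\langle x,\beta^\vee\rangle|+|\langle x',\beta^\vee\rangle|$ and additivity fails strictly (a direct computation in this configuration gives $d(x,x')=d(x,o)=d(x',o)$, not $d(x,x')=2\,d(x,o)$). The combinatorial fact you invoke --- disjoint faces of a spherical Coxeter complex are separated by \emph{some} wall --- is true but much weaker than what termwise additivity needs, namely that \emph{every} positive root weakly separates $S$ from $S'$; that stronger statement characterizes (essentially) opposite germs, not merely disjoint ones. So the heart of your argument does not go through, and a repair along these lines (e.g.\ proving a quantitative separation $d(x,y)+d(x',y)\le C\, d(x,x')$ with $C\in F$ valid over an arbitrary ordered $F$-module, and then using convexity of $\ker e$, or controlling all coordinates of $x$ by the separating roots only) requires genuinely more work.

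For comparison, the paper's proof avoids all metric estimates: it applies the quotient map $\phi_\MS\colon\MS\to\MS(\RS,\Lambda')$ induced by $e$. The images $\phi_\MS(S)$ and $\phi_\MS(S')$ are again Weyl simplices based at $\phi_\MS(y)$ with no common Weyl simplex, hence intersect exactly in $\{\phi_\MS(y)\}$; since $d(x,x')\in\ker e$ means $\phi_\MS(x)=\phi_\MS(x')$, this common point lies in the intersection and therefore equals $\phi_\MS(y)$, which is precisely the statement $d(x,y),d(x',y)\in\ker e$. In other words, the disjointness hypothesis is used combinatorially downstairs in $\MS'$, where it directly pins $\phi_\MS(x)$ to the base point, rather than upstairs through a betweenness identity that simply does not hold.
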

\proof
The images of the two Weyl simplices $S$ and $S'$ under $\phi_\MS$ are again two Weyl simplices $\phi_\MS(S)$ and $\phi_\MS(S')$ having no common Weyl simplices. So the intersection of $\phi_\MS(S)$ and $\phi_\MS(S')$ is the singleton $\{\phi_\MS(y)\}$. As the point $\phi_\MS(x)=\phi_\MS(x')$ lies in this intersection, one has that $\phi_\MS(x)=\phi_\MS(x') = \phi_\MS(y)$. By the definition of $\phi_\MS$, we conclude that $d(x,y),d(x',y) \in \ker e$.
\qed

\begin{lemma}\label{lemma:germ}
Suppose that some subset $K$ of the model space $\A$ is closed under taking convex hulls of pairs of points of $K$. Then a germ based at a point $k\in K$ lies in $K$, if and only if, there is a point $x\in K$ contained in the Weyl simplex $S$ in $\A$ corresponding to that germ, and $S$ is the minimal Weyl simplex containing $x$.
\end{lemma}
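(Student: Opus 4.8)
My plan is to unravel the two notions involved and prove each implication by a direct computation with root coordinates. Let $k$ denote the base point of $S$; by hypothesis $k\in K$. Saying that $S$ is the minimal Weyl simplex containing $x$ means exactly that $x$ lies in the relative interior of $S$, i.e.\ $\langle x-k,\alpha^\vee\rangle=0$ for precisely those roots $\alpha$ vanishing on the cone $S-k$ -- write $I$ for this set of roots -- and $\langle x-k,\alpha^\vee\rangle\neq0$ for $\alpha\notin I$. Saying that the germ of $S$ at $k$ lies in $K$ means, by the definition recalled above, that $B_\varepsilon(k)\cap S\subseteq K$ for some $\varepsilon>0$ in $\Lambda$. (If $S=\{k\}$ both statements just say $k\in K$, so assume $S\neq\{k\}$, whence $x\neq k$.) I would prove the two implications separately.

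For the implication ``$\Leftarrow$'' the hypothesis on $K$ gives $\mathrm{conv}(\{k,x\})\subseteq K$, so it is enough to check $B_\varepsilon(k)\cap S\subseteq\mathrm{conv}(\{k,x\})$ for $\varepsilon:=\min\{\,|\langle x-k,\alpha^\vee\rangle| : \alpha\in\RS,\ \langle x-k,\alpha^\vee\rangle\neq0\,\}$, a positive element of $\Lambda$. The key observation is that $\mathrm{conv}(\{k,x\})$, being the intersection of all half-apartments containing $k$ and $x$, contains every $y$ for which $\langle y,\alpha^\vee\rangle$ lies between $\langle k,\alpha^\vee\rangle$ and $\langle x,\alpha^\vee\rangle$ for all $\alpha\in\RS$. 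If $y\in B_\varepsilon(k)\cap S$, then $\langle y-k,\alpha^\vee\rangle=0$ for $\alpha\in I$, while for $\alpha\notin I$ it has the same sign as $\langle x-k,\alpha^\vee\rangle$ -- because $x-k$ and $y-k$ both lie in $S-k$, which sits on one side of $H_\alpha$ -- and has absolute value $\leq d(k,y)<\varepsilon\leq|\langle x-k,\alpha^\vee\rangle|$; hence $\langle y-k,\alpha^\vee\rangle$ lies between $0$ and $\langle x-k,\alpha^\vee\rangle$, so $y\in\mathrm{conv}(\{k,x\})\subseteq K$. (It is worth flagging that this step really needs the ``half-apartment'' notion of convex hull; the naive geodesic segment would be too small once $\dim S\geq 2$.)

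For the implication ``$\Rightarrow$'' I am given $B_\varepsilon(k)\cap S\subseteq K$ and must exhibit a point of $K$ in the relative interior of $S$, i.e.\ I must show that $B_\varepsilon(k)\cap S$ meets $\mathrm{relint}(S)$. This is the heart of the matter, and the obstacle is that over a general ordered abelian group one cannot simply scale a chosen direction of the cone $S-k$ down towards $k$: a fixed nonzero element of $\Lambda$ may be ``infinitely larger'' than $\varepsilon$. The way around this is the $F$-module structure. Since $S-k$ is, up to the $\sW$-action, a face of $\Cf$, and the relative interior of such a face is already nonempty inside the finite-dimensional real space $\mathrm{span}_F(\RS)$, one may choose $v\in\mathrm{span}_F(\RS)$ with $k+v\otimes\lambda\in\mathrm{relint}(S)$ for every $\lambda>0$ in $\Lambda$. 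Then $d(k,\,k+v\otimes\lambda)=c\lambda$, with $c:=\sum_{\alpha\in\RS^+}|\langle v,\alpha^\vee\rangle|$ a positive element of the field $F$, so for $\lambda:=\varepsilon/(2c)\in\Lambda$ the point $x:=k+v\otimes\lambda$ lies in $B_\varepsilon(k)\cap S\subseteq K$ and in $\mathrm{relint}(S)$, as required. Thus the essential work is in the ``$\Rightarrow$'' direction -- the existence of relative-interior points of $S$ arbitrarily close to its base point -- which forces one to invoke the $F$-module (in particular the $\Q$-vector space) structure of $\Lambda$; the ``$\Leftarrow$'' direction is then routine bookkeeping with root coordinates.
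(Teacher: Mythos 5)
Your proposal is correct and follows essentially the same route as the paper: the ``$\Rightarrow$'' direction produces a relative-interior point of $S$ within $\varepsilon$ of $k$ by scaling via the $F$--module structure (the paper reduces to the submodule $\Lambda\cong F$ spanned by $\varepsilon$ and scales there), and the ``$\Leftarrow$'' direction traps $B_\varepsilon(k)\cap S$ inside the convex hull of $k$ and $x$, which is contained in $K$. Your explicit root-coordinate choice of $\varepsilon$ in the converse merely replaces the paper's identification of that hull with $S\cap T$, where $T$ is the opposite Weyl simplex based at $x$; the underlying argument is the same.
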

\proof
Let $k$ be a point of $K$ and $\mu$ a germ of a Weyl simplex $S$ based at $k$. We have to prove that there exists an $\varepsilon>0$ such that $B_\varepsilon(k)\cap S\subset K$ if and only if there is a point $x \in K$ lying in the Weyl simplex $S$ corresponding to that germ, such that $S$ is the minimal Weyl simplex containing $x$.

First assume that there is such a point $x$ in $K$. Consider the minimal Weyl simplex $T$ based at $x$ containing $k$. One has that $\partial T$ is opposite $\partial S$ in $\partial \A$. By assumption, the convex hull of $x$ and $k$ is contained in $K$. But since this convex hull is the intersection of $S$ and $T$ we have that there is an $\varepsilon \leq d(x,k)$ such that $S\cap B_\varepsilon(k)$ is contained in $K$.

Conversely assume that $\mu$ is a germ of a Weyl simplex $S$ based at $k$ contained in $K$. So there exists an $\varepsilon>0$ such that $S\cap B_\varepsilon(k)$ is contained in $K$. If we can prove that there exists a point $x$ in $S$ but not on a non-maximal face of this Weyl simplex, with distance less than $\varepsilon$ to $k$, we are done.

Let $\RS$, $B$ and $F$ be as in Section~\ref{section:defs}. Consider the submodule $M$ of $\Lambda$ spanned by $\varepsilon$, this submodule is isomorphic to $F$. Consider only the linear combinations $\sum_{\alpha\in B} v_\alpha\alpha$, with $v_\alpha \in M$, as points (see Section~\ref{section:defs}). This way, the problem is reduced to the case where $\Lambda$ is isomorphic to $F$, a subfield of the reals. Assume we are in this case, let $y$ be a point of $S$, not on a non-maximal face of the Weyl simplex. Due to the field nature of $\Lambda$ one can now find an $f \in F$ such that the product of $f$ with distance $d(o,y)$ is $\varepsilon/2$. Taking the scalar product of $f$ with the vector corresponding to $y$, one obtains a vector corresponding to a point $x$ with the same properties as $y$ but at distance $\varepsilon/2$ from $o$. This concludes the proof.
\qed

%For this case, the existence of a point $x$ with the desired properties is nearly trivial (one way to see that can take scalar products of $\Lambda$ with points in this case, the distance with the  ).

%Let $\rho$ be half the sum of positive roots with respect to $B$. By Proposition 29 on p. 181 and \S 1.10 in \cite{Bourbaki4-6} $\rho$ is contained in the interior of the fundamental Weyl chamber $\Cf$, i.e. for all $\alpha\in B$ one has $\langle \rho, \alpha^\vee\rangle > 0$.
%Write $\rho$ as a linear combination $\sum_{\alpha\in B} v_\alpha\alpha$ and choose $\lambda\in\Lambda$ such that $\lambda\leq \frac{1}{2n v_0} \varepsilon$ where $n$ is the number of elements in the basis and $v_0\define\max\{v_\alpha : \alpha\in B\}$. Assume without loss of generality that $k=0$ and $S=\Cf$, then
%$$
%d(0,\lambda \rho)
%	= 2\langle \lambda \sum_{\alpha\in B} v_\alpha\alpha, \rho^\vee\rangle
%	= 2\lambda\sum_{\alpha\in B}v_\alpha 
%	\leq 2\lambda n v_0
%	\leq \varepsilon.
%$$
%The second equality follows by the fact that $\langle \alpha,\rho^\vee\rangle =1$ for all $\alpha\in B$. This %completes the proof.

\begin{lemma}\label{lemma:conv_exist}
Let $K$ be a convex subset of the model space $\MS$, and $x$ a point of $\A\!\setminus\! K$. Then there exists a point $y \in K$ and Weyl simplex $S$ based at $y$, containing $x$, such that the intersection $S \cap K$ is exactly $\{y\}$.
\end{lemma}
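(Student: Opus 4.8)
I would first turn the global conclusion $S\cap K=\{y\}$ into a purely local statement at $y$, then produce $y$ from a half-apartment that separates $x$ from $K$, and finally reduce the order-theoretic difficulty to ordinary convex geometry over $F$ in the manner of Lemma~\ref{lemma:germ}.

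\textbf{Reduction to a local claim.} Weyl simplices are convex and $K$ is convex, so $S\cap K$ is convex for every Weyl simplex $S$ based at a point $y\in K$. Hence if $S\cap K$ contained a point $z\ne y$, the whole segment $[y,z]$ would lie in $S\cap K$, and conversely $S\cap K=\{y\}$ as soon as $y$ is the only point of $S\cap K$ in some ball $B_\varepsilon(y)$. Since $K$ convex means that near $y$ it agrees with its tangent cone $\bigcup_{t>0}t(K-y)$ translated to $y$, it therefore suffices to find $y\in K$ and a Weyl simplex $S$ based at $y$ with $x\in S$ such that the convex cone $S-y$ and the tangent cone of $K$ at $y$ meet only in the origin. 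By the standard separation fact for pointed convex cones this in turn amounts to producing a linear functional $\ell\colon\MS\to\Lambda$ with $\ell>0$ on $(S-y)\setminus\{y\}$ and $\ell\le 0$ on $K-y$, i.e.\ a half-apartment supporting $K$ at $y$ whose open opposite side contains $S\setminus\{y\}$.

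\textbf{Choice of $y$ and $S$, and the Weyl-geometric input.} Write $K=\bigcap_{i=1}^m H_i$ as a finite intersection of closed half-apartments; since $x\notin K$ it violates one of them, say $H_0=H_{i_0}$, whose bounding hyperplane has the form $\{z:\langle z,\alpha^\vee\rangle=c\}$ for a root $\alpha$, with $\langle x,\alpha^\vee\rangle>c$. I would take $y$ to be a point of $K$ on a supporting hyperplane of $K$ lying ``between $x$ and $K$'' (e.g.\ a point of $K$ nearest to $x$, or a point on a facet of $K$ that $x$ separates), and let $S$ be the \emph{minimal} Weyl simplex based at $y$ with $x$ in its relative interior. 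After applying a suitable $w\in\aW$ I may assume the ambient closed Weyl chamber at $y$ is the fundamental one $\Cf$, so that on it $d(y,\cdot)=\sum_{\gamma\in\RS^+}\langle\,\cdot-y,\gamma^\vee\rangle$ and $S$ corresponds to the face of $\Cf$ cut out by the simple roots $J$ vanishing on $x-y$. The key elementary fact, coming from the non-negativity of the entries of the inverse Cartan matrix (equivalently, the self-duality $F_J\subseteq F_J^{\ast}$ of a Weyl-chamber face inside its linear span), is then that $\langle w,\gamma^\vee\rangle>0$ for $w\in(S-y)\setminus\{y\}$ exactly for those $\gamma\in\RS^+$ not supported on $J$ -- and these are precisely the $\gamma$ for which $\langle x-y,\gamma^\vee\rangle>0$. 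This forces any functional built from the active constraints of $K$ at $y$ to be automatically strictly positive on $S-y\setminus\{y\}$, which is the content of the local claim above; hence $S\cap K=\{y\}$, while $x\in S$ and $y\in K$ by construction.

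\textbf{Main obstacle.} The delicate point is the existence of the supporting half-apartment attained \emph{at a point of $K$}: over an arbitrary totally ordered abelian group suprema need not be attained and $F$-scalars do not connect all elements of $\Lambda$, so neither ``nearest point of $K$'' nor ``facet of $K$ separating $x$'' is automatic, and one must match the root $\alpha$ coming from $H_0$ with the face $J$ of $S$. I expect this to be the heart of the proof, and I would handle it exactly as in Lemma~\ref{lemma:germ}: pass to the $F$-submodule $M\le\Lambda$ generated by the finitely many pairings $\langle x-y,\gamma^\vee\rangle$ and $\langle k-y,\gamma^\vee\rangle$ (for the facet points $k$ of $K$) that occur, restrict the model space accordingly, and reduce to the case $\Lambda\cong F$, a subfield of $\R$, where the statement is the classical fact about projecting an exterior point onto a closed convex polyhedron; alternatively, an induction on the number $m$ of defining half-apartments. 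Carrying the correct finite data into $M$ is the only genuinely technical part.
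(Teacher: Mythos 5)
Your proposal does not close the essential gap: the existence of the point $y$ is never established, and the two devices you lean on both fail over a general $\Lambda$. First, the ``standard separation fact for pointed convex cones'' and the choice of $y$ as a nearest point of $K$ to $x$ (or as a point on a separating facet) are unavailable here, as you yourself note; but your proposed repair --- passing to the $F$--submodule $M\le\Lambda$ generated by the finitely many relevant pairings and ``reducing to the case $\Lambda\cong F$'' --- does not work. In Lemma~\ref{lemma:germ} that trick succeeds only because $M$ is generated by a \emph{single} element $\varepsilon$ and hence is isomorphic to $F$; with several generators $M$ is a finite-dimensional ordered $F$--vector space whose order may be non-archimedean (e.g.\ lexicographic on $F^2$), so infima are still not attained, there is no Euclidean projection onto a polyhedron, and no separation theorem is at hand. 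Since the lemma quantifies over all of $K$, you also cannot shrink the data to finitely many points in the first place. The non-archimedean situation is exactly the content of the lemma, so this reduction begs the question; likewise the assertion that ``any functional built from the active constraints of $K$ at $y$ is automatically strictly positive on $(S-y)\setminus\{y\}$'' is precisely what would have to be proved, and no argument is given. (A smaller symptom of the same archimedean bias: the claim that $S\cap K=\{y\}$ follows once $y$ is the only point of $S\cap K$ in some ball $B_\varepsilon(y)$ is false when $d(y,z)$ is infinitely larger than $\varepsilon$; the cone formulation can be repaired, but the germ-local reduction as stated cannot.)

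For comparison, the paper avoids metric projections and separating functionals entirely and argues combinatorially: take a Weyl simplex $S$ based at the \emph{exterior} point $x$ of minimal dimension subject to meeting $K$ (so no proper face of $S$ meets $K$); for each $k\in S\cap K$ consider the oppositely directed Weyl simplex $S'_k$ based at $k$ (which contains $x$), and choose $y\in S\cap K$ so that the face of the germ of $S'_y$ lying in $S\cap K$ is minimal. A translation argument using Lemma~\ref{lemma:germ} shows that if $S'_y\cap S\cap K$ were larger than $\{y\}$, one could pass along a one-dimensional face to a point $y'$ whose germ-face is strictly smaller, contradicting minimality; convexity of $K$ and the minimality of $\dim S$ then upgrade $S'_y\cap S\cap K=\{y\}$ to $S'_y\cap K=\{y\}$. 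If you want to salvage your approach, you would need to replace the nearest-point/separation step by an argument of this double-minimality type (or some other order-theoretically robust device); as written, the heart of the lemma is missing.
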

\proof
Let $S$ be a Weyl simplex based at $x$ of minimal dimension while having a non-empty intersection with $K$. By the minimality no face of $S$ contains points of $K$ (except $S$ itself). Consider for each point $k$ in the intersection $S \cap K$ the Weyl simplex $S'_k$ based at $k$ and in the opposite direction of $S$. It follows that each such $S'_k$ contains $x$.

Let $y$ be a point of $S \cap K$ such that the face of the germ of $S'_{y}$ in $S \cap K$ is minimal. Suppose that $S'_y \cap S \cap K$ contains more than just the point $y$. If this is the case we can find some one-dimensional face $R$ of the Weyl simplex $S'_y$ such that $R \cap S\cap K$ contains more than just $y$. It is impossible that $R$ lies completely in $K$ as no (non-maximal) face of $S$ contains points of $K$ (while $R$ will contain a point of such a face of $S$). So $R \cap K$ is a line segment bounded two endpoints, one is $y$, call the other $y'$ (these endpoints exist by convexity). 

Consider the Weyl simplex $S'_{y'}$. By the previous lemma one has that if a face of the germ of $S'_{y'}$ lies in $S \cap K$, then the corresponding face of the germ $S'_y$ based at $y$ obtained by translation will also be contained in $S \cap K$. Moreover the face of the germ of $S'_{y}$ in $S \cap K$ is strictly larger than the one of $S'_{y'}$, this because the germ of $R$ is also contained in $S'_{y} \cap S \cap K$. This violates minimality. It follows that $S'_y \cap S \cap K=\{y\}$. 

As no (non-maximal) face of $S$ contains points of $K$, it follows from the convexity of $K$ that $S \cap K = \pi \cap K$ with $\pi$ the subspace (of the model space) spanned by $S$. As $S'_y$ lies in $\pi$, this implies that $S'_y \cap K = S'_y \cap \pi \cap K  = S'_y \cap S \cap K=  \{y\} $. We conclude that the point $y$ and Weyl simplex $S'_y$ have the desired properties.
\qed

\begin{lemma}\label{lemma:compare}
Let $K$ and $K'$ be two convex subsets of respectively apartments $A$ and $A'$ of the affine building $(X,\cA)$, such that their intersection $K \cap K'$ is non-empty. If $x \in K$ and $x' \in K'$ are two points with $d(x,x') \in \ker e$, then there exists a point $z \in K \cap K'$ with $d(x,z),d(x',z) \in \ker e$.  
\end{lemma}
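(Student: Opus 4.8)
The plan is to translate the claim into a statement about the base change quotient map $\phi\colon X\to X'$ introduced at the start of this section. Since $d(x,x')\in\ker e$ we have $\phi(x)=\phi(x')$, and since $x\in K$, $x'\in K'$ this common point lies in $\phi(K)\cap\phi(K')$; what has to be shown is that it already lies in $\phi(K\cap K')$. The cases $x\in K'$ and $x'\in K$ are immediate, so assume $x\notin K'$ and $x'\notin K$. I would then \emph{shrink} $K$ and $K'$: fix once and for all a point $p\in K\cap K'$. By convexity, the convex hull of $x$ and $p$ (which lies in $A$) is contained in $K$, and the convex hull of $x'$ and $p$ (which lies in $A'$) is contained in $K'$; these two convex hulls still contain $x$, $x'$, and their intersection contains $p$ and is contained in $K\cap K'$. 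Hence it suffices to prove the lemma after replacing $K$, $K'$ by these convex hulls, so from now on we may assume that $K$ and $K'$ are the convex hulls in $A$, resp. $A'$, of $\{x,p\}$, resp. $\{x',p\}$.

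The second, and main, step is to transport the whole configuration into the single apartment $A$. For this I would use a retraction $\rho\colon X\to A$ onto $A$ centred at $p$; it fixes $A$ and $A\cap A'$ pointwise, is distance non-increasing, and (as one checks using (A2)) restricts to an $\aW$-compatible isomorphism of $A'$ onto $A$, hence carries convex subsets of $A'$ to convex subsets of $A$. Setting $y:=\rho(x')$, distance non-increasingness gives $d(x,y)\le d(x,x')\in\ker e$, and since $\ker e$ is \emph{convex} in $\Lambda$ this forces $d(x,y)\in\ker e$; likewise $d(x',y)\le d(x',x)+d(x,y)\in\ker e$. Also $L:=\rho(K')$ is convex in $A$, contains $y$, meets $K$ in at least the point $p$, and satisfies $L\cap(A\cap A')=K'\cap A$. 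Thus the problem is reduced to the following statement inside the model space: given convex sets $K$, $L$ of $\MS$ whose intersection contains $p$, and points $x\in K$, $y\in L$ with $d(x,y)\in\ker e$, there is a point $z\in K\cap L$ lying in $A\cap A'$ with $d(x,z)\in\ker e$. Transporting such a $z$ back along $\rho^{-1}$ then gives a point of $K\cap K'$, and its distance to $x'$ is controlled by the triangle inequality exactly as above.

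For this model-space assertion I would argue as follows. If $x$ already lies in $C:=K\cap L\cap(A\cap A')$ we are done; otherwise apply Lemma~\ref{lemma:conv_exist} to the convex set $C$ and the point $x$, obtaining a point $u\in C$ and a Weyl simplex $S$ based at $u$ with $x\in S$ and $S\cap C=\{u\}$. It then remains to prove $d(x,u)\in\ker e$, which finishes the argument with $z:=u$. To this end one considers the minimal Weyl simplex at $u$ containing $x$ and a suitable minimal Weyl simplex at $u$ in the direction of $y$, and uses that $C$ meets $S$ only in $u$, together with the convexity of $K$, $L$ and $A\cap A'$, to see that these two Weyl simplices share nothing beyond their common base point $u$; Lemma~\ref{lemma:inker} then yields $d(x,u)\in\ker e$ (and $d(y,u)\in\ker e$). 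Lemma~\ref{lemma:germ} is what makes the bookkeeping with these minimal Weyl simplices and their germs precise, in particular the exclusion of degenerate overlaps.

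The step I expect to be the real obstacle is the interplay of these last two ideas: producing, at a \emph{common} base point, two Weyl simplices that capture the given points and meet only there, while simultaneously guaranteeing that the point produced lies back in $A\cap A'$, so that it is a genuine point of $K\cap K'$ and not merely of $K\cap\rho(K')$. In other words, the heart of the matter is handling the third convex set $A\cap A'$ on the same footing as $K$ and $L$ and ruling out the various ways in which the supporting Weyl simplices can overlap; no single trick does this, and it is here that Lemmas~\ref{lemma:conv_exist}, \ref{lemma:germ} and \ref{lemma:inker} are all brought to bear.
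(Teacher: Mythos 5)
Your overall toolkit is the right one (the initial triangle--inequality reduction, Lemma~\ref{lemma:conv_exist}, Lemma~\ref{lemma:germ}, a retraction onto $A$, and Lemma~\ref{lemma:inker} at the end), but there are two genuine gaps. First, the transport step rests on a retraction $\rho:X\to A$ ``centred at $p$'' which is claimed to fix $A\cap A'$ pointwise \emph{and} to restrict to a $\aW$--compatible isomorphism of $A'$ onto $A$, carrying convex subsets of $A'$ to convex subsets of $A$, so that $L=\rho(K')$ is convex and $L\cap(A\cap A')=K'\cap A$. No such retraction is available: the retraction of (A5) is only distance non-increasing with $r^{-1}(p)=\{p\}$, and the germ-centred retraction $r_{A,\mu}$ of Definition~\ref{Def_vertexRetraction} restricts to an isomorphism only on apartments \emph{containing} the germ $\mu$ — and no Weyl-chamber germ of $A$ need lie in $A'$ (the intersection $A\cap A'$ can be a single point or lower dimensional). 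For a general $A'$ through $p$, the restriction of a retraction onto $A$ is a folding: it is not injective, and images of convex sets are only finite unions of convex pieces, so the reduction to ``two convex sets inside one apartment'' is not justified as stated.

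Second, the step you yourself flag as ``the real obstacle'' — producing, at the common base point, two Weyl simplices that contain $x$ resp.\ the transported point and meet only at that base point, so that Lemma~\ref{lemma:inker} applies — is precisely where the proof has its content, and it is not carried out. The paper's proof does this \emph{before} any retraction: apply Lemma~\ref{lemma:conv_exist} to the convex set $K\cap K'$ and the point $x$ to get $a\in K\cap K'$; take the minimal Weyl simplices $S\subset A$ and $S'\subset A'$ based at $a$ containing $x$ resp.\ $x'$; by minimality and convexity their germs lie in $K$ resp.\ $K'$, so a common simplex of these germs would put points of $S\cap K\cap K'$ other than $a$ near $a$, contradicting the choice of $a$. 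Only afterwards does one retract, centred at the germ of a Weyl chamber $R\subset A$ containing $S$, using nothing beyond distance non-increasingness: the image $R'$ of $S'$ then still shares no Weyl simplex with $S$, and Lemma~\ref{lemma:inker} applied to $x\in S$ and $y=r_{A,R}(x')\in R'$ gives $d(x,a)\in\ker e$. In your order of operations (retract first, then look for the base point inside $A$) the two-apartment information that drives this disjointness argument has been discarded: $x$ and $y$ both lie in $A$, and there is no reason the minimal Weyl simplices at $u$ toward $x$ and toward $y$ should meet only in $u$. So the sketch, as it stands, does not close, and the missing piece is exactly the paper's central argument rather than routine bookkeeping.
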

\proof
Note that if we find a point $z \in K \cap K'$ such that $d(x,z) \in \ker e$, then also $d(x',z) \in \ker e$ holds by the triangle inequality. In order to exclude trivialities, we assume that $x,x' \notin K \cap K'$.

Let $K \cap K'$ and $x$ be the convex subset and point on which we apply Lemma~\ref{lemma:conv_exist}.  This lemma yields us a point $a$ of $K \cap K'$. Let $S$ and $S'$ be the minimal Weyl simplices in respectively $A$ and $A'$, both based at $a$ and containing respectively $x$ and $x'$. The germs of $S$ and $S'$ (which lie in respectively $K$ and $K'$ due to minimality and convexity) have no simplices in common, because otherwise the intersection of the Weyl simplex $S$ with $K$ and $K'$ would contain more than just $a$, contradicting the construction of $a$. 

Let $R$ be a Weyl chamber in $A$ based at $a$ and containing $S$. Consider the image  $R'$ of $S'$ under the retraction $r_{A,R}$ (see Definition~\ref{Def_vertexRetraction}). As the germs of $S$ and $S'$ have no simplices in common, the Weyl simplices $S$ and $R'$ do not have Weyl simplices in common either. Let $y$ be the image of $x'$ under the retraction. As distances are not increased by this, we have that $d(x,y) \in \ker e$.  By Lemma~\ref{lemma:inker} and the previous discussion, we obtain that $d(x,a) \in \ker e$. Putting $z = a$ finishes the proof. 
\qed

\begin{lemma}\label{cor:sectors1}
Let $S$ and $S'$ be two Weyl simplices of the $\Lambda$--building $(X,\cA)$ having non-empty intersection. If $x \in S$ and $x' \in S'$ are such that $d(x,x') \in \ker e$, then there exists a point $z \in S \cap S'$ such that $d(x,z),d(x',z) \in \ker e$.  
\end{lemma}
\proof
Directly from the previous lemma.
\qed

%for every point $x \in S$, there exists a point $x' \in S'$ such that $d(x,x') \in \ker e$, and one can choose this correspondence being an isometry. 
The following lemma shows that if the base points of two parallel Weyl chambers are close together, that then the entire Weyl chambers are close together.

\begin{lemma}\label{cor:sectors2}
Let $S$ and $S'$ be two parallel Weyl chambers of the $\Lambda$--building $(X,\cA)$, based at respectively $y$ and $y'$ with $d(y,y') \in \ker e$. Then there exists an isometry $\tau$ from $S$ to $S'$, such that if $x\in S$, then $d(x,\tau(x)) \in \ker e$.
\end{lemma} 
\proof
As the intersection of parallel Weyl chambers is non-empty, the previous lemma shows the existence of a point $z$ in $S\cap S'$ such that $d(y,z),d(y',z) \in \ker e$. Consider the Weyl chamber $S''$ based at $z$ parallel to $S$ and $S'$. It is a sub-Weyl chamber of both $S$ and $S'$.

Let $A$ be an apartment containing $S$. There exists a translation of $A$ mapping $y$ to $z$ (and so also $S$ to $S''$). Each point of $A$ is mapped to another point of $A$ with the distance between both in $\ker e$. In the same way as we have defined an isometry from $S$ to $S''$ here, we can define an isometry from $S''$ to $S'$. Combining these two isometries yields the desired isometry.%forms an isometry $\tau$ from $S$ to $S'$ such that for each point $x$ in $S$ one has that $d(x,\tau(x)) \in \ker e$.
\qed

The last result of this section shows that for already intersecting apartments nothing surprising happens.

\begin{lemma}\label{cor:A2part}
If two apartments $A$ and $B$ of $(X,\cA)$ already intersect before applying $\phi$, then $\phi(A) \cap \phi(B) =\phi(A \cap B)$ and (A2) will be satisfied for each pair of charts of the apartments $\phi(A)$ and $\phi(B)$ of $(X',\cA')$.
\end{lemma}
\proof
It is clear that $\phi(A \cap B)$ is a convex subset of $\phi(A)$, as the images of the finite number of half-apartments of $A$ with $A \cap B$ as intersection, will be a finite number of half-apartments of $\phi(A)$ with $\phi(A\cap B)$ as intersection.

Let $x \in A$ be a point such that $\phi(x) \in \phi(B)$. So there exists a point $x' \in B$ such that $\phi(x)=\phi(x')$, or equivalently $d(x,x') \in \ker e$. Lemma~\ref{lemma:compare} implies that there is a point $z \in A \cap B$ with $d(x,z),d(x',z) \in \ker e$.  So $\phi(x)=\phi(x')=\phi(z)$. We can conclude that $\phi(A) \cap \phi(B) =\phi(A \cap B)$, which is a convex set.

The second part of (A2), this being the existence of a $w'\in W'$ with certain properties, follows directly from Condition (A2) for the original building $(X,\cA)$.
\qed

\subsection{Local structures of $(X',\cA')$}\label{section:locs}

Although we did not prove Condition (A2) yet, one can define germs in $(X',\cA')$ in the same way as in Section~\ref{section:lg}. The set of germs based at some point $\phi(x) \in X'$ forms again a simplicial complex $\Delta_{\phi(x)} X'$. The goal of this section is to prove that $\Delta_{\phi(x)} X'$ is again a spherical building. Denote by $X''$ all the points in $X$ such that the distance to the point $x$ lies in $\ker{e}$. The germs of Weyl chambers at $\phi(x)$ are the chambers of $\Delta_{\phi(x)}X'$.

\begin{lemma}\label{lemma:inap}
Two germs at $\phi(x)$ lie in a common apartment of $(X',\cA')$.
\end{lemma}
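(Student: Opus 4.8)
The plan is to reduce the statement to the known global structure of the building $(X,\cA)$ via the quotient map $\phi$, using Corollary~\ref{cor:sectors2} to transport parallelism data down to $X'$. Let $\mu_1,\mu_2$ be two germs of Weyl chambers based at $\phi(x)$. By the construction of germs in $(X',\cA')$ and the definition of $\cA'$, each $\mu_i$ lifts to a germ of a Weyl chamber $S_i$ in $X$ based at some point $x_i \in X''$, i.e.\ with $d(x,x_i)\in\ker e$; indeed a germ at $\phi(x)$ is represented by the image under $\phi$ of a Weyl chamber in some apartment $\phi(A)$, and such a Weyl chamber is $\phi$ of a Weyl chamber in $A$ whose base point maps to $\phi(x)$. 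So we may fix representatives $S_1$ based at $x_1$ and $S_2$ based at $x_2$ in $X$, with $d(x,x_1),d(x,x_2)\in\ker e$, hence $d(x_1,x_2)\in\ker e$ by the triangle inequality.

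First I would use Condition (A4) for $(X,\cA)$: there are sub-Weyl chambers $S_1'\subseteq S_1$, $S_2'\subseteq S_2$ and a chart $f\in\cA$ with $S_1'\cup S_2'\subseteq f(\MS)=:A$. Note $S_i'$ need not be based at $x_i$, but it is parallel to a Weyl chamber based at $x_i$ (a sub-Weyl chamber of $S_i$), so $\partial S_i' = \partial S_i$. Now apply Corollary~\ref{cor:sectors2}: $S_i'$ is parallel to the sub-Weyl chamber $T_i$ of $S_i$ based at $x_i$ — wait, more carefully, I want a Weyl chamber based at a point of $X''$ lying inside $A$. The cleaner route: pick inside $A$ a Weyl chamber $R_i$ parallel to $S_i$ (it exists, e.g.\ a sub-Weyl chamber of $S_i'$ based at its own base point), and let $z_i$ be its base point. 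Then $R_i$ is parallel to the sub-Weyl chamber of $S_i$ based at $x_i$, and by Corollary~\ref{cor:sectors2} applied to these two parallel Weyl chambers — whose base points $z_i$ and a point on $S_i$ have distance in $\ker e$ only if $d(x_i,z_i)\in\ker e$, which we do not yet know.

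So the actual key step, and the main obstacle, is arranging that the apartment produced by (A4) can be taken to contain the germs $\Delta_{x_i}S_i$ themselves, not merely parallel copies, and with base points in $X''$. I would instead argue directly at the level of germs: since $\Delta_xX$ is a spherical building (Proposition~\ref{prop:exis_res}), the germs $\Delta_{x_1}S_1$ and, after transporting $\Delta_{x_2}S_2$ along an isometry $X''\to X''$ given by Corollary~\ref{cor:sectors2}, lie in a common building. More concretely: by Corollary~\ref{cor:sectors2} (or rather its proof, giving an apartment-to-apartment map over $\ker e$), there is an apartment $A$ of $X$ containing a germ of $S_1$ at $x_1$ and a germ at $x_1$ of a Weyl chamber $S_2''$ that is the image of $S_2$ under a $\ker e$-isometry; one obtains $A$ from Proposition~\ref{prop:tec} or from (A4) together with the residue-building axioms for $\Delta_{x_1}X$ (any two chambers of $\Delta_{x_1}X$ lie in a common apartment). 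Then $\phi(A)$ is an apartment of $(X',\cA')$, and since $\phi$ collapses the $\ker e$-isometry to the identity on germs at $\phi(x)$, both $\mu_1$ and $\mu_2$ are germs contained in $\phi(A)$. The delicate point to check is that $\phi$ identifies the germ of $S_2''$ at $x_1$ with $\mu_2$ — this follows because the $\ker e$-isometry moves every point a distance in $\ker e$, so it commutes with $\phi$ up to the equivalence $\sim$, hence induces the identity on $\Delta_{\phi(x)}X'$. I expect the bookkeeping of "which germ lifts to what, based at which point of $X''$" to be the part requiring the most care; once the lift into a single apartment of $X$ is achieved, pushing forward by $\phi$ is immediate.
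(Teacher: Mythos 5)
There is a genuine gap, in fact two. First, the transport step is unjustified: Corollary~\ref{cor:sectors2} provides a $\ker e$--isometry only between \emph{parallel} Weyl chambers, while your $S_1$ (based at $x_1$) and $S_2$ (based at $x_2$) represent arbitrary germs and need not be parallel. Neither that corollary nor its proof produces an isometry carrying $S_2$ to a Weyl chamber $S_2''$ based at $x_1$ and moving every point by a distance in $\ker e$ -- buildings have no such ``global $\ker e$--translations''. Without $S_2''$, the appeal to the residue $\Delta_{x_1}X$ (two chambers in a common apartment) does not apply, because the two lifted germs are based at different points of $X''$.

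Second, and more fundamentally, your closing claim that ``once the lift into a single apartment of $X$ is achieved, pushing forward by $\phi$ is immediate'' is false. That an apartment $A$ of $X$ contains the germ $\Delta_{x_1}S_1$ only means $B_\varepsilon(x_1)\cap S_1\subset A$ for \emph{some} $\varepsilon>0$, and this $\varepsilon$ may lie in $\ker e$; then the overlap collapses under $\phi$ and $\phi(A)$ need not contain $\Delta_{\phi(x)}\phi(S_1)$. Concretely, for $\Lambda=\R\times\R$ ordered lexicographically and $e$ the first projection, two rays of a $\Lambda$--tree emanating from $x_1$ and agreeing exactly on a segment of length $(0,5)$ have the same germ at $x_1$ in $X$, yet their images in $X'$ meet only in $\phi(x)$ and have distinct germs there; this is precisely why Lemma~\ref{lemma:germs} tests equality of $X'$--germs by coincidence on all of $X''$, not by coincidence of $X$--germs. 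The paper sidesteps both issues by not lifting at all: it proves the analogue of Proposition~\ref{prop:tec} (Parreau's argument) directly for $(X',\cA')$ -- an apartment of $(X',\cA')$ containing a prescribed germ at $\phi(x)$ and a prescribed chamber at infinity -- and the lemma follows at once. If you insist on lifting, you could apply Proposition~\ref{prop:tec} in $X$ to $S_1$ and $c=\partial S_2$, identify the $\phi$--image of the Weyl chamber of the resulting apartment based at $x_1$ parallel to $S_2$ with $\phi(S_2)$ via Corollary~\ref{cor:sectors2} (now legitimately), but you would still have to rule out the collapsing phenomenon for the germ of $S_1$, which does not come for free.
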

\proof
This follows directly from a proposition with the same statement and proof of Proposition~\ref{prop:tec}.
\qed

\begin{lemma}\label{lemma:germs}
Two Weyl chambers $S_1$ and $S_2$ in $(X,\cA)$ based at respectively $x_1$ and $x_2$, such that $\phi(x_1)=\phi(x_2)=\phi(x)$, give rise to the same germ $\Delta_{\phi(x)} \phi(S_1) = \Delta_{\phi(x)} \phi(S_2)$ in $X'$, if and only if, there exists a point $x' \in X'' \cap S_1 \cap S_2$ such that the sub-Weyl chambers of $S_1$ and $S_2$ based at $x'$ are identical restricted to $X''$.
\end{lemma}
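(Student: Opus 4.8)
The plan is to prove both directions of the equivalence by relating the germ at $\phi(x)$ in $X'$ to initial segments of Weyl chambers in $X$, using the earlier auxiliary lemmas (especially Corollary~\ref{cor:sectors1}, Corollary~\ref{cor:sectors2} and Lemma~\ref{lemma:germ}) to translate statements about $\ker e$--closeness into statements about common sub-Weyl chambers.

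\textbf{The ``if'' direction.} Suppose there is a point $x' \in X'' \cap S_1 \cap S_2$ such that the sub-Weyl chambers of $S_1$ and $S_2$ based at $x'$ coincide when restricted to $X''$. I would first pass to these sub-Weyl chambers, call them $T_1 \subseteq S_1$ and $T_2 \subseteq S_2$, both based at $x'$. Since $x' \in X''$ we have $\phi(x') = \phi(x)$, so $\Delta_{\phi(x)}\phi(S_i) = \Delta_{\phi(x')}\phi(T_i)$ (germs are unchanged by passing to sub-Weyl chambers with the same tip, and the germ only depends on an arbitrarily small neighborhood of the base point, which is the same for $S_i$ and $T_i$). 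Now I claim $\phi(T_1)$ and $\phi(T_2)$ agree on a neighborhood of $\phi(x')$ in $X'$: a neighborhood $B_{\varepsilon'}(\phi(x'))$ in $X'$ is the image of some $B_\varepsilon(x')$ intersected with $X''$ — more precisely, $\phi(y_1) = \phi(y_2)$ for $y_i \in T_i$ iff $d(y_1,y_2) \in \ker e$, and since $T_1$ and $T_2$ literally agree on $X''$, the images $\phi(T_1)$ and $\phi(T_2)$ are equal as subsets of $X'$ near $\phi(x')$. Hence the germs in $X'$ coincide.

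\textbf{The ``only if'' direction.} This is the harder direction and I expect it to be the main obstacle. Assume $\Delta_{\phi(x)}\phi(S_1) = \Delta_{\phi(x)}\phi(S_2)$. By definition of equality of germs in $X'$, there are sub-Weyl chambers whose images coincide on a neighborhood of $\phi(x)$; unwinding this, there exist points $p_i \in S_i$ with $\phi(p_1) = \phi(p_2)$ lying in the respective images, and in fact the images $\phi(S_1)$ and $\phi(S_2)$ share a neighborhood of $\phi(x)$. I would first use Lemma~\ref{lemma:inap} (germs at $\phi(x)$ lie in a common apartment) to put the two germs inside one apartment $\phi(A)$ of $X'$; lifting, I get an apartment $A$ of $X$ and sub-Weyl chambers $S_1', S_2'$ of $S_1, S_2$ whose images live in $\phi(A)$. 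Then I would apply Corollary~\ref{cor:sectors1} (or Lemma~\ref{lemma:compare}) to pairs of points realizing the germ-equality, to produce points in $S_1' \cap S_2'$ that are $\ker e$--close to the bases; Corollary~\ref{cor:sectors2} then gives an isometry between parallel sub-Weyl chambers that moves points only within $\ker e$. The point $x'$ we seek will be the common base point of a sufficiently small common sub-Weyl chamber: I need to choose it in $X'' \cap S_1 \cap S_2$, which requires $d(x,x') \in \ker e$, and this is exactly what the corollaries deliver since the germ-equality forces $\phi(S_1)$ and $\phi(S_2)$ to literally coincide near $\phi(x)$. Finally, to see that the sub-Weyl chambers based at $x'$ are identical \emph{restricted to $X''$}: two points $y_1 \in S_1$, $y_2 \in S_2$ with the same coordinates relative to their (parallel, hence identifiable) bases and with $y_i \in X''$ satisfy $\phi(y_1) = \phi(y_2)$ by the germ-equality, hence $d(y_1,y_2) \in \ker e$, and then an argument as in Lemma~\ref{lemma:inker} (applied in a common apartment, which exists by (A4) after shrinking) forces $y_1 = y_2$ on $X''$.

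\textbf{Main obstacle.} The delicate point is bookkeeping the passage between ``germs equal in $X'$'' (a statement about arbitrarily small neighborhoods of $\phi(x)$, with $\phi(x)$ possibly identified with many points of $X$) and a concrete witness $x' \in X$ with $d(x,x') \in \ker e$ lying on both $S_1$ and $S_2$. The subtlety is that the neighborhoods in $X'$ pull back to neighborhoods in $X''$, not in all of $X$, so all the comparison lemmas must be invoked with their $K, K'$ taken to be convex hulls living inside apartments and the conclusions read off on $X''$; I expect one further small lemma, essentially a restatement of Lemma~\ref{lemma:germ} for the set $X''$ (which is closed under convex hulls of pairs of its points — this should be checked, or cited from the structure of $X''$), to be needed so that the germ in $X'$ at $\phi(x)$ is faithfully represented by a germ of a Weyl simplex inside $X''$ based at $x'$.
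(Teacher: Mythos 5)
Your ``only if'' half starts on the same track as the paper (Corollary~\ref{cor:sectors1} to produce a common point $z=x'$ of $S_1\cap S_2$ with $d(x_i,z)\in\ker e$, Corollary~\ref{cor:sectors2} to shift the germs to the new base point, Lemma~\ref{lemma:compare} to compare the two chambers), but the final step is not actually proved: you must show that the sub-Weyl chambers $S_1',S_2'$ based at $x'$ satisfy $S_1'\cap X''=S_2'\cap X''$ as \emph{sets}, and your argument only shows that the relevant points are at $\ker e$--distance from one another, which is automatic (all points of $X''$ are at $\ker e$--distance from each other) and gives no containment; Lemma~\ref{lemma:inker} cannot force $y_1=y_2$. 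The paper closes exactly this gap by choosing points $z_1\in S_1'$, $z_2\in S_2'$ with $\phi(z_1)=\phi(z_2)$ lying off all non-maximal faces of $\phi(S_1')$, $\phi(S_2')$, applying Lemma~\ref{lemma:compare} to obtain $z'\in S_1'\cap S_2'$ with that image, and then noting that the convex hull of $z$ and $z'$ lies in both $S_i'$ and contains both $S_i'\cap X''$ (the image $\phi(z')$ has positive evaluation against every simple root, while points of $X''$ have all evaluations in $\ker e$). Some such quantitative step is needed and is missing from your sketch.

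The ``if'' half is where your proposal genuinely fails. You claim that since $T_1$ and $T_2$ agree on $X''$, ``the images $\phi(T_1)$ and $\phi(T_2)$ are equal as subsets of $X'$ near $\phi(x')$''; but $T_i\cap X''=\phi^{-1}(\phi(x'))\cap T_i$ maps to the \emph{single point} $\phi(x')$, whereas a neighbourhood $B_{\varepsilon'}(\phi(x'))\cap\phi(T_i)$ consists of images of points $y\in T_i$ with $0<e(d(y,x'))\le\varepsilon'$, none of which lie in $X''$. So agreement on $X''$ says nothing directly about those points; your identification of $X'$--neighbourhoods of $\phi(x')$ with $X''$ (also visible in your ``main obstacle'' remark --- the pullback of a ball of $X'$ is in fact much larger than $X''$, not contained in it) is the source of the error. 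The real bridge from the infinitesimal scale to a positive scale in $\Lambda'$ is the paper's face-by-face convexity argument: for one-dimensional faces $F_1,F_2$ of the same type based at $x'$, the intersection $F_1\cap F_2$ is convex and contains $F_1\cap X''$; since $\ker e$ (when nontrivial) has no maximal positive element, the endpoint $a$ of $F_1\cap F_2$ must satisfy $d(x',a)\notin\ker e$, so $\phi(F_1)$ and $\phi(F_2)$ share a segment of positive $\Lambda'$--length and hence have equal germs, and repeating this over the faces yields $\Delta_{\phi(x)}\phi(S_1)=\Delta_{\phi(x)}\phi(S_2)$. This idea is absent from your sketch, so as it stands the ``if'' direction is not established.
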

\proof
Assume that $S_1$ and $S_2$, with properties as stated in the lemma, give rise to the same germ $\Delta_{\phi(x)} \phi(S_1) = \Delta_{\phi(x)} \phi(S_2)$. We first deal with the case where the two sectors $S_1$ and $S_2$ have a common point $z \in X''$.

% Lemma \ref{cor:sectors1} implies the existence of a point $z$ in $S_1\cap S_2$ such that $d(z,x_i)\in\ker(e)$ for $i=1,2$. Hence $\phi(z)=\phi(x_i)=\phi(x)$ for $i=1,2$. Let $S_i'$ denote the sub Weyl chamber of $S_i$ based at $z$. It remains to prove that $S_1'\cap X''=S_2'\cap X''$.

Apply Lemma~\ref{cor:sectors2} to the pairs $S_i$, $S_i'$ to obtain that $\Delta_{\phi(x)}\phi(S_i')=\Delta_{\phi(x)}\phi(S_i)$, so $\Delta_{\phi(x)}\phi(S_1') = \Delta_{\phi(x)}\phi(S_2')$. This makes it possible to find points $z_1$ of $S_1'$ and $z_2$ of $S_2'$ such that $\phi(z_1)=\phi(z_2)$, and such that this image does not lie on the (non-maximal) faces of the Weyl chambers $\phi(S_1)$ and $\phi(S_2)$. Applying Lemma~\ref{lemma:compare} to $S_1'$ and $S_2'$ and obtain a point $z' \in S_1'\cap S_2'$ such that $\phi(z')=\phi(z_1)=\phi(z_2)$. The convex hull of $z$ and $z'$ lies in both $S_1'$ and $S_2'$, and contains both $S_1' \cap X''$ and $S_2' \cap X''$, this proves that $S_1' \cap X'' = S_2' \cap X''$. 

Now we show we can always find such a $z$. Let $S_1''$ be the Weyl chamber based at $x_2$ and parallel to $S_1$. Lemma~\ref{cor:sectors2} one again has that $\Delta_{\phi(x)} \phi(S_1) = \Delta_{\phi(x)} \phi(S_1'')$. So also $\Delta_{\phi(x)} \phi(S_1'') = \Delta_{\phi(x)} \phi(S_2)$, and as $S_1''$ and $S_2$ clearly intersect, we can apply the previous case and conclude that $S_1'' \cap X'' = S_2 \cap X''$. Combining this with the implication of Lemma~\ref{cor:sectors1} that the Weyl chambers $S_1$ and $S_1''$ intersect in $X''$, we have that $S_1$ and $S_2$ have a point in common in $X''$.

We now prove the other direction. Let $F_1$ and $F_2$ be two one-dimensional Weyl simplices of the same type, with source $x'$, and assume that they are faces of the Weyl chambers parallel to respectively $S_1$ and $S_2$ with source $x'$. The intersection $F_1 \cup F_2$ is convex in $F_1$, thus either $F_1=F_2$, or there exists a point $a \in F_1$ such that $F_1 \cap F_2$ equals the convex hull of $x'$ and $a$. As $F_1 \cap X'' \subset F_1 \cap F_2$, the distance between $x'$ and $a$ does not lie in $\ker e$. This implies that $\Delta_{\phi(x)}\phi(F_1) = \Delta_{\phi(x)}\phi(F_2)$. Repeating this argument proves eventually that $\Delta_{\phi(x)}\phi(S_1) = \Delta_{\phi(x)}\phi(S_2)$.
\qed

%Alternative approach to the second direction in the above lemma: By Corollary~\ref{lemma:corr}, we can assume that $S_1$ and $S_2$ have common source $x'$, and that the restriction to $X''$ is identical. Let $A$ be an apartment containing $S_1$. The intersection $K=S_1\cap S_2$ is convex, and thus the intersection of a finite number of half-apartments of $A$. We claim that $K$ contains the germ  Let $\pi$ be such a half-apartment 

\begin{lemma}
If the intersection $\phi(A) \cap \phi(B)$ of the images of two apartments $A$ and $B$ contains a germ of a Weyl chamber, then the intersection $A \cap B$ is non-empty. 
\end{lemma}
\proof
By applying the previous lemma to two Weyl chambers in $A$ and $B$ such that their images have the same germs.
\qed

Note that the above lemma combined with Lemma~\ref{cor:A2part} implies that if two apartments are identical after the base change functor, then they are also identical before it. Once we prove that $(X',\cA')$ is indeed a $\Lambda'$--building, it follows that the buildings at infinity of $(X,\cA)$ and $(X',\cA')$ are isomorphic.

Another consequence is that if two apartments intersect in a half-apartment after the base change functor, they also intersected in one before it. As the $\Lambda$--building $(X,\cA)$ satisfies Condition (A6), it follows that $(X',\cA')$ satisfies it too.

\begin{lemma}\label{lemma:spheric}
The set of germs $\Delta_{\phi(x)} X'$ forms a spherical building.
\end{lemma}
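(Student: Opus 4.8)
The plan is to verify that $\Delta_{\phi(x)}X'$ satisfies the standard axioms of a building of type $\RS$; since $\RS$ is a spherical root system its Weyl group $\sW$ is finite, so such a building is automatically a spherical building. As apartment system for $\Delta_{\phi(x)}X'$ I would take the subcomplexes $\Delta_{\phi(x)}\phi(A)$ consisting of the germs at $\phi(x)$ of Weyl simplices contained in $\phi(A)$, with $\phi(A)$ ranging over the apartments of $(X',\cA')$ through $\phi(x)$. Recall that the chambers of $\Delta_{\phi(x)}X'$ are the germs of Weyl chambers based at $\phi(x)$, and that every Weyl simplex of $X'$ has the form $\phi(S)$ for a Weyl simplex $S$ of $X$, hence lies in some apartment $\phi(A)$; consequently every simplex of $\Delta_{\phi(x)}X'$ lies in one of the chosen apartments.

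The first two axioms are quick. Each apartment $\Delta_{\phi(x)}\phi(A)$ is a Coxeter complex of type $\RS$: the chart $f'$ is injective and identifies the Weyl simplices of $X'$ contained in $\phi(A)$ with the Weyl simplices of the model space $\MS'=\MS(\RS,\Lambda')$, so $\Delta_{\phi(x)}\phi(A)$ is isomorphic to the complex of germs of Weyl simplices of $\MS'$ based at $(f')^{-1}(\phi(x))$, which is the Coxeter complex of $\sW$ — a germ at a point only records the direction of its simplex, i.e. the face of the $\sW$-arrangement it spans (cf. Sections~\ref{section:defs} and~\ref{section:lg}, and compare Proposition~\ref{prop:exis_res}). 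The axiom that any two simplices of $\Delta_{\phi(x)}X'$ lie in a common apartment is then exactly Lemma~\ref{lemma:inap}, since it gives this already for any two germs at $\phi(x)$.

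The step where the earlier work comes together, and which I expect to be the most delicate, is the compatibility axiom. By the standard reformulation — valid here because the apartments are spherical, so every simplex lies below a chamber — it suffices to treat two apartments $\Sigma=\Delta_{\phi(x)}\phi(A)$ and $\Sigma'=\Delta_{\phi(x)}\phi(B)$ sharing a chamber $C$, and to produce an isomorphism $\Sigma\to\Sigma'$ fixing $\Sigma\cap\Sigma'$ pointwise. Since $C$ is a germ of a Weyl chamber lying in both $\phi(A)$ and $\phi(B)$, the corollary stated just after Lemma~\ref{lemma:germs} yields $A\cap B\neq\emptyset$, so Corollary~\ref{cor:A2part} applies: $\phi(A)\cap\phi(B)=\phi(A\cap B)$ is a convex subset of $\phi(A)$, and there is $w'\in W'$ realizing the transition between the charts of $\phi(A)$ and $\phi(B)$ over the preimage of this overlap. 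The element $w'$ induces a type-preserving simplicial isomorphism $\phi(A)\to\phi(B)$ which is the identity on $\phi(A)\cap\phi(B)$. A small representative Weyl chamber of $C$ lies in $\phi(A)\cap\phi(B)$ and contains its base point $\phi(x)$, so $\phi(x)\in\phi(A\cap B)$; hence the isomorphism fixes $\phi(x)$ and restricts to an isomorphism of residues $\Sigma\to\Sigma'$. Finally, every germ in $\Sigma\cap\Sigma'$ has arbitrarily small representatives inside the fixed set $\phi(A)\cap\phi(B)$, so each such germ is fixed, and $\Sigma\to\Sigma'$ fixes $\Sigma\cap\Sigma'$ pointwise, as required.

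Putting the three verifications together, $\Delta_{\phi(x)}X'$ is a building of type $\RS$, hence — as $\sW$ is finite — a spherical building. In the write-up the two points deserving care are the passage from the ``shared chamber'' form of the compatibility axiom to the general one (standard building theory for finite apartment systems), and the check that the apartment isomorphism coming from $w'$ genuinely descends to the residues at $\phi(x)$, which rests on $\phi(x)$ itself lying in $\phi(A\cap B)$.
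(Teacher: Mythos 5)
Your proof is correct and follows essentially the same route as the paper: the axiom that two simplices lie in a common apartment comes from Lemma~\ref{lemma:inap}, and the compatibility axiom is obtained by first deducing $A\cap B\neq\emptyset$ from the corollary following Lemma~\ref{lemma:germs} and then invoking Corollary~\ref{cor:A2part}. Your write-up simply makes explicit some details (apartments as Coxeter complexes of type $\RS$, the descent of the isomorphism induced by $w'$ to the residues at $\phi(x)$) that the paper leaves implicit.
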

\proof
The germs in $\Delta_{\phi(x)} X'$ form a simplicial complex. Any two simplices lie in an apartment due to Lemma~\ref{lemma:inap}. The second property to check is if one has two apartments $\phi(A)$ and $\phi(B)$ containing $\phi(x)$, that when the corresponding apartments $\Delta_{\phi(x)} \phi(A)$ and $\Delta_{\phi(x)} \phi(B)$ share a chamber $C$ and simplex $D$, then there exists an isomorphism from $\Delta_{\phi(x)} A$ to $\Delta_{\phi(x)} B$, preserving $C$ and $D$ point-wise. The existence of this isomorphism now follows from Lemma~\ref{cor:A2part}, and the fact that $A$ and $B$ intersect due to the previous lemma.
\qed

\subsection{Proof of Condition (A2)}

To prove the first Main Result the only non-trivial condition to check is (A2). In the following let $A$ and $B$ be two apartments of $(X,\cA)$ such that the intersection $K :=\phi(A) \cap \phi(B)$ is non-empty. The method of the proof will consist of ``slightly shifting'' Weyl chambers using Lemma~\ref{cor:sectors2}, this to reduce to the easier case of already intersecting convex sets (see Lemma~\ref{cor:A2part}). The results of Section~\ref{section:locs} then allow us to ``glue'' this information together.

\begin{lemma}
Suppose $x \in A$ is a point such that $\phi(x)\in K$. Let $S$ be a Weyl chamber of $A$ based at $x$. Then there is a convex set $K_{S,x} \subset S$, containing $x$ and such that $\phi(K_{S,x}) = K \cap \phi(S)$. 
\end{lemma}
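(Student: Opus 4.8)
The plan is to construct $K_{S,x}$ as a union of convex pieces coming from the local structure and to show it is itself convex. First I would look at the germs of Weyl simplices at $x$ in $S$ whose image under $\phi$ lies in $K$; since $\phi(x)\in K=\phi(A)\cap\phi(B)$, there is $x_B\in B$ with $\phi(x_B)=\phi(x)$, i.e.\ $d(x,x_B)\in\ker e$, and by Lemma~\ref{lemma:compare} applied to $A\cap B$ (or rather to convex sets inside $A$ and $B$) we may choose representatives of all relevant data based at a common point. For each germ $\mu$ at $x$ in $S$ with $\phi(\mu)\subset K$, I would use Lemma~\ref{lemma:germ} (with $K':=$ the convex set $\phi^{-1}(K)\cap S$, which is closed under convex hulls since $K$ is convex in $\phi(A)$) to produce an actual point of $\phi^{-1}(K)$ in the open face of $S$ corresponding to $\mu$; conversely every point $y\in S$ with $\phi(y)\in K$ determines such a germ (the minimal Weyl simplex containing $y$). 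So the candidate is
$$
K_{S,x}\define\{\,y\in S : \phi(y)\in K\,\}=S\cap\phi^{-1}(K).
$$

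The next step is to verify that this set is convex in the model-space sense, i.e.\ an intersection of finitely many half-apartments of $S$. The inclusion $K_{S,x}\subseteq S\cap\phi^{-1}(K)$ is immediate; the content is that $S\cap\phi^{-1}(K)$ is convex. Here I would argue: given $y,y'\in S$ with $\phi(y),\phi(y')\in K$, the convex hull $\operatorname{conv}(y,y')$ lies in $S$ (as $S$ is convex) and maps under $\phi$ into $\operatorname{conv}(\phi(y),\phi(y'))\subseteq K$ because $\phi$, restricted to the apartment $A$, is an affine map $\phi_\MS$ which preserves half-apartments and hence convex hulls; thus $K_{S,x}$ is closed under convex hulls of pairs. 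Combined with the fact (from Lemma~\ref{lemma:germ} and the finiteness of the number of germ-types) that the ``boundary directions'' of $K_{S,x}$ are cut out by finitely many hyperplanes through the base points that arise, one concludes $K_{S,x}$ is an intersection of finitely many half-apartments, i.e.\ convex. Finally $x\in K_{S,x}$ since $\phi(x)\in K$.

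It remains to check $\phi(K_{S,x})=K\cap\phi(S)$. The inclusion $\subseteq$ is clear from the definition. For $\supseteq$, take $p\in K\cap\phi(S)$; then $p=\phi(y)$ for some $y\in S$, and since $p\in K$ we have $\phi(y)\in K$, so $y\in K_{S,x}$ and $p\in\phi(K_{S,x})$. One subtlety to address: a priori a point $p\in K\cap\phi(S)$ might only be of the form $\phi(y')$ with $y'\notin S$; but $p\in\phi(S)$ means by definition that $p=\phi(y)$ for some $y\in S$, so this is automatic. A second subtlety, which I expect to be the genuine obstacle, is the convexity claim: one must ensure that $S\cap\phi^{-1}(K)$ does not ``spread out'' in directions transverse to those allowed by $K$, and this is where Lemma~\ref{lemma:germ} and Corollary~\ref{cor:sectors1} do the real work — they guarantee that whenever a germ direction in $S$ touches $K$ it does so along an honest convex sub-piece, so that the intersection is controlled by finitely many half-apartment conditions rather than by some pathological subset. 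Making this last point precise — i.e.\ translating the germ-level information into a finite list of defining half-apartments for $K_{S,x}$ — is the step I would spend the most care on.
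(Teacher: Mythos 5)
There is a genuine gap, and it sits exactly where you predicted the difficulty would be. Your candidate $K_{S,x}\define S\cap\phi^{-1}(K)$ is the full $\phi$--saturated preimage, and this set is in general \emph{not} convex in the sense the paper (and the next lemma) needs, namely an intersection of \emph{finitely many} half-apartments. Being closed under convex hulls of pairs is strictly weaker. Concretely, suppose $K$ is a single point $\{\phi(x)\}$ and $\ker e\neq\{0\}$: then $S\cap\phi^{-1}(K)$ is the set of points of $S$ at distance in $\ker e$ from $x$. Since $\ker e$ is a nontrivial convex subgroup of $\Lambda$ it has no maximal element, so there is no minimal half-apartment containing this set for any root direction, and it cannot be written as a finite intersection of half-apartments. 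This is fatal for the intended use: the following lemma (convexity of $K$ in $\phi(A)$) takes the $K_i=K_{S_i,x}$ and writes each as $\bigcap_j K_i^j$ with $K_i^j$ the minimal half-apartment in the direction of the root $r_j$, which requires genuine convexity of $K_{S,x}$, not hull-closedness. Your closing plan to ``translate the germ-level information into a finite list of defining half-apartments'' cannot be carried out for the saturated preimage, because the statement you would be proving is false for that set. There is also a circularity in your appeal to Lemma~\ref{lemma:germ}: you justify hull-closedness of $\phi^{-1}(K)\cap S$ by saying $K$ is convex in $\phi(A)$, but that is precisely what the subsequent lemma proves \emph{using} the present one.

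The paper's proof avoids the saturation problem by constructing a convex \emph{cross-section} rather than the full fibre: pick $x'\in B$ with $\phi(x')=\phi(x)$, let $S'$ be the Weyl chamber parallel to $S$ based at $x'$, and let $\tau:S'\to S$ be the isometry of Corollary~\ref{cor:sectors2}, which moves every point by a distance in $\ker e$. Then $K_{S,x}\define\tau(S'\cap B)$ is convex because $S'\cap B$ is a convex subset of an apartment and $\tau$ is assembled from translations of apartments; the inclusion $\phi(K_{S,x})\subseteq K\cap\phi(S)$ is immediate from the $\ker e$--displacement property of $\tau$, and the reverse inclusion is where the real work happens, via Lemma~\ref{lemma:compare} applied to $B$ and $S'$ to produce, for any $z\in S$ with $\phi(z)\in K$, a point $a\in S'\cap B$ with $\phi(\tau(a))=\phi(z)$. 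If you want to salvage your approach, you would have to replace the full preimage by such a section; as written, the proposal does not establish the convexity assertion of the lemma.
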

\proof
As $\phi(x) \in K$ there exists a point $x' \in B$ such that $\phi(x)=\phi(x')$. Let $S'$ be the Weyl chamber parallel to $S$ and based at $x'$. Lemma~\ref{cor:sectors2} defines a isometry $\tau$ from $S'$ to $S$. The intersection $K'$ of $S'$ with $B$ is convex. Hence its image $K_{S,x}$ under $\tau$ will be a convex subset of $S$ and $A$. %Denote this image by $K_{S,x}$. This is a closed and convex subset of $S$ and hence also of $A$.

If $y$ is an element of $K_{S,x}$, then its preimage $y'$ under $\tau$ will by construction be in $B$. Lemma~\ref{cor:sectors2} tells us that $d(y,y') =d(\tau(y'),y') \in \ker e$, so $\phi(y)=\phi(y') \in K \cap \phi(S)$. This implies that $\phi(K_{S,x})$ is contained in $K \cap \phi(S)$.

Now let $z$ be a point of $S$ such that $\phi(z) \in K$. This implies that there exists a $z' \in B$ with $\phi(z)=\phi(z')$. Let $z'' \in S'$ be the preimage of $z$ under $\tau$. Note that the triangle inequality implies that $d(z',z'') \in \ker e$. As both $B$ and $S'$ are convex subsets of apartments, we can apply Lemma~\ref{lemma:compare} to find a point $a$ in $K'=B\cap S'$ having distances in $\ker e$ to both $z'$ and $z''$. So we conclude that $\phi(\tau(a)) = \phi(z') =\phi(z)$, and thus $\phi(K_{S,x}) \supset K \cap \phi(S)$, or by combining both inequalities $\phi(K_{S,x}) = K \cap \phi(S)$. 
\qed

\begin{lemma}
The set $K$ is convex in $\phi(A)$.
\end{lemma}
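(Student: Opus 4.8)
The plan is to feed the preceding lemma into a star-shapedness argument for $K=\phi(A)\cap\phi(B)$; throughout I write $\phi$ also for the restriction of the base change functor to an apartment, which corresponds to $\phi_\MS$ via charts. After discarding the trivial cases $K=\emptyset$ and $K=\phi(A)$, I would fix once and for all a point $\phi(x_0)\in K$ with $x_0\in A$. The apartment $\phi(A)$, being isomorphic to $\MS'$, is covered by the finitely many closed Weyl chambers based at $\phi(x_0)$; write these as $\phi(S_1),\dots,\phi(S_m)$, where $S_1,\dots,S_m$ are the Weyl chambers of $A$ based at $x_0$. Applying the preceding lemma to each $S_i$ produces a convex set $K_{S_i,x_0}\subseteq S_i$ containing $x_0$ with $\phi(K_{S_i,x_0})=K\cap\phi(S_i)$. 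Hence $K=\bigcup_{i=1}^{m}\phi(K_{S_i,x_0})$ is a finite union of convex subsets of $\phi(A)$, all containing $\phi(x_0)$.

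The core step is to show that $K$ is star-shaped with respect to $\phi(x_0)$, i.e. that for every $p\in K$ the convex hull of $\phi(x_0)$ and $p$ in $\phi(A)$ lies in $K$. Given $p\in K$, it lies in some $\phi(K_{S_i,x_0})$; pick a preimage $p'\in K_{S_i,x_0}$. Since $K_{S_i,x_0}$ is convex in $A$ and contains both $x_0$ and $p'$, the convex hull $C$ of $x_0$ and $p'$ in $A$ is contained in $K_{S_i,x_0}$, so $\phi(C)\subseteq\phi(K_{S_i,x_0})\subseteq K$. Now $\phi_\MS$ carries convex subsets of $\MS$ to convex subsets of $\MS'$ — this follows from the same reduction to a subfield $F$ of $\R$ used in the proof of Lemma~\ref{lemma:germ}, under which $\phi_\MS$ becomes an $F$--linear map and hence sends polytopes onto polytopes. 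Thus $\phi(C)$ is a convex subset of $\phi(A)$ containing both $\phi(x_0)$ and $\phi(p')=p$, so it contains the convex hull of $\phi(x_0)$ and $p$, and that convex hull lies in $K$. Since every point of $K=\phi(A)\cap\phi(B)$ has the form $\phi(x)$ with $x\in A$, the whole argument applies with $\phi(x)$ in place of $\phi(x_0)$ for an arbitrary point of $K$; hence $K$ is closed under taking convex hulls of pairs of its points.

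It remains to upgrade this to convexity in the sense of Definition~\ref{Def_LambdaBuilding}, namely to exhibit $K$ as a finite intersection of half-apartments of $\phi(A)$. This part is routine: $K$ is a finite union of closed polyhedra, so it is closed and its boundary is contained in the finitely many facet hyperplanes of the pieces $\phi(K_{S_i,x_0})$, and a closed convex subset of $\MS'$ whose boundary meets only finitely many hyperplanes is a finite intersection of half-apartments. This yields the first assertion of (A2) for the pair of charts of $\phi(A)$ and $\phi(B)$; the existence of the required $w'\in W'$ then follows from (A2) for $(X,\cA)$, exactly as in the proof of Corollary~\ref{cor:A2part}. The step I expect to be delicate is the one in the second paragraph: one must know that $\phi_\MS$ maps the convex hull of $x_0$ and $p'$ \emph{onto} (and not merely into) the convex hull of $\phi(x_0)$ and $p$, and this is precisely what forces the passage out of the ordered-abelian-group setting into a finite-dimensional vector space over a subfield of $\R$, where it becomes the standard fact that linear surjections send polytopes onto polytopes; the polyhedral bookkeeping in the last paragraph is harmless by comparison.
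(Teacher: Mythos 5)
Your argument establishes (modulo the issues below) that $K$ is closed under taking convex hulls of pairs of its points, but that is not the notion of convexity used in this paper: by Definition~\ref{Def_LambdaBuilding} and Section~\ref{Sec:Prelim}, ``convex'' means \emph{a finite intersection of half-apartments}, and the passage from pairwise-hull-closedness to that statement is exactly the hard content of the lemma. Your last paragraph, which is supposed to supply this upgrade, argues with ``closed'' sets, ``boundaries'' and ``facet hyperplanes'' of polyhedra and appeals to a ``standard fact''; none of these topological notions is available over a general ordered abelian group $\Lambda'$ (there is no topology, no compactness, and even over $\R$ the claim that a convex finite union of polyhedra is a polyhedron is a theorem, not bookkeeping). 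The paper's proof fills precisely this gap by a separation argument: it first writes down an explicit candidate $K'$, the intersection of one optimally chosen half-apartment $\phi(K^j)$ per root $r_j$, and then, for a hypothetical point $k'\in K'\setminus K$, uses Lemma~\ref{lemma:conv_exist} to manufacture a point $k$ and a Weyl simplex $S$ through $k'$ with $S\cap K=\{k\}$, passes to the residue $\Delta_k X'$ (a spherical building by Lemma~\ref{lemma:spheric}), invokes the convexity theorem for spherical buildings to find a half-apartment $\Delta_k H$ of $\Delta_k\phi(A)$ missing $\Delta_k\phi(S)$, and pulls it back via Lemma~\ref{lemma:germ} to a half-apartment of $\phi(A)$ containing $K$ but not $k'$ --- a contradiction. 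Your proposal has no substitute for this local-to-global step, which is where the residue buildings constructed earlier in the section are actually needed.

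A second, smaller problem is your justification that $\phi_\MS$ carries the convex hull of $x_0$ and $p'$ onto (or at least onto a convex set containing) the hull of the images. The reduction to a subfield $F\subset\R$ in the proof of Lemma~\ref{lemma:germ} replaces $\Lambda$ by a rank-one $F$--submodule inside a \emph{single} ordered group; it does not turn the quotient map $e:\Lambda\to\Lambda'$ (whose kernel is a convex subgroup) into an $F$--linear map of real vector spaces, so ``linear surjections send polytopes onto polytopes'' is not applicable. What is really needed is a lifting statement: a point of $\phi(A)$ satisfying the root inequalities cutting out $\mathrm{conv}(\phi(x_0),\phi(p'))$ must be lifted, after a correction by a $\ker(e)$--valued translation, to a point of $\mathrm{conv}(x_0,p')$. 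A statement of this flavour underlies Corollary~\ref{cor:A2part} for sets of the form $\phi(A\cap B)$, but you would have to prove it for the hulls you use; as written, this step and the final polyhedral step are both gaps, and the second one is essential.
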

\proof
Let $x$ be a point of $A$ such that $\phi(x) \in K$. For each Weyl chamber $S_i$ based at $x$ (with $i \in \{1,\dots,n\}$, $n$ being the number of Weyl chambers in $A$ based at $x$), we can define a convex subset $K_i := K_{S_i,x}$ of $S_i$ using the previous lemma. We have that $\phi(\bigcup_i K_i) = K$.

We label the roots in $\RS$ by $r_1, \dots, r_m$, with $m=|\RS |$. For the convex subset $K_i$, let $K_i^j$ be the minimal positive half-apartment defined by the root $r_j$ containing $K_i$. That is $K$ is a set $H_{i,k_i}^+\define \{x : \langle x, r_i^\vee\rangle \geq k_i\}$ with $k_i\in\Lambda\cup \{-\infty\}$ as small as possible. For the purpose of the proof, we interpret $H_{i, -\infty}^+$, which is the whole apartment, also as a half-apartment. As each $K_i$ is convex, it follows easily that $K_i = \bigcap_j K_i^j$.

For $j \in \{1, \dots , m\}$, define $K^j$ to be the maximal half-apartment corresponding to the root $r_j$ in the set $\{K_1^j, K_2^j, \dots, K_n^j\}$. 

Let $K'$ be the intersection $\bigcap_j \phi(K^j)$. It is clear that $K \subset K'$ as each $\phi(K_i)$ will be a subset of $K'$. Also, $K'$ will by construction be the intersection of all half-apartments of $\phi(A)$ containing $K$, and so $K'$ and $\phi(K^j)$ will be independent of the choice of $x$. If we show that $K=K'$, then the lemma is proven. Suppose that $k'$ is a point of $K'$ but not of $K$. 

Let $F$ be the minimal Weyl simplex based at $x$, containing $k'$. Then applying Lemma~\ref{lemma:conv_exist} to the set $F \cap K$ (which is convex by the previous lemma), one obtains a point $k$ and a minimal Weyl simplex $S$ in $\phi(A)$, containing $k'$ and based at $k$, such that $S\cap F \cap
K=\{k\}$. Note that again by the previous lemma the set $S \cap K$ is a convex subset of the
apartment $\phi(A)$, which certainly contains $k$, if it would contain more it
contradicts Lemma~\ref{lemma:germ}. One can conclude that we have constructed a point $k$ and Weyl simplex $S$ in $\phi(A)$, containing $k'$, based at $k$, such that $S \cap K = \{k\}$. 

Lemma~\ref{lemma:spheric} shows that $\Delta_k X'$ is a spherical building in which $\phi(A)$ and $\phi(B)$ define two apartments $\Delta_k \phi(A)$ and $\Delta_k \phi(B)$. The intersection of both is a convex subset of $\Delta_k \phi(A)$. By~\cite[Prop. 3.137]{AB} convex subsets of apartments are finite intersections of half-apartments. So there exists a half-apartment $\Delta_k H$ of $\Delta_k \phi(A)$ in the spherical building $\Delta_k X'$, which does not contain the chamber $\Delta_k \phi(S)$.  

Let $H$ be the half-apartment of $A$ corresponding to $\Delta_k H$. Using Lemma~\ref{lemma:germ}, it follows that this half-apartment contains $K$ completely, but does not contain $S$. So $k'$ does not lie in this half-apartment (by minimality of $S$). We have obtained a contradiction and proved convexity.
\qed

\begin{lemma}\label{lemma:a2general}
Given two charts $f,g\in\cA'$ there exists  a $w\in \aW'$ with $f\vert_{f^{-1}(g(\MS'))} = (g\circ w )\vert_{f^{-1}(g(\MS'))}$.
\end{lemma}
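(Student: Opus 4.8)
The plan is to deduce this statement, which is the ``transition map'' half of axiom~(A2) for $(X',\cA')$, from the building axioms applied to the spherical residues $\Delta_pX'$ produced in Lemma~\ref{lemma:spheric}, which were constructed so as to be available before (A2) is known; no case distinction should be required. Write $A'=f(\MS')$, $B'=g(\MS')$ and $K'=f^{-1}(g(\MS'))$. By the previous lemma $K'$ is convex in $\MS'$ (in fact a finite intersection of half-apartments), and by the construction of $\cA'$ together with the distance formula of Main Result~\ref{theorem:firstmain} the charts $f$ and $g$ are isometries of $\MS'$ onto $A'$ and $B'$ sending Weyl simplices of $\MS'$ to Weyl simplices of $X'$ in a type-preserving way. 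I would then fix a point $p\in A'\cap B'$, put $v=f^{-1}(p)$ and $v''=g^{-1}(p)$, and observe that $f$ and $g$ induce simplicial isomorphisms $\Delta_v\MS'\to\Delta_pA'$ and $\Delta_{v''}\MS'\to\Delta_pB'$, exhibiting $\Delta_pA'$ and $\Delta_pB'$ as two apartments of the spherical building $\Delta_pX'$ of type $\RS$.

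Since $\Delta_pX'$ is a building, there is an isomorphism $\Delta_pA'\to\Delta_pB'$ fixing $\Delta_pA'\cap\Delta_pB'$ pointwise. Transporting it through the two induced charts above and composing with the translation of $\MS'$ carrying $v$ to $v''$ yields an element $w\in W'$ whose germ at $v$ agrees with that of the transition map $\psi := g^{-1}\circ f$ (defined on $K'$) over the subcomplex $\Delta_vK'$; here one uses that for convex sets the germ of an intersection is the intersection of the germs, so that the chart induced by $f$ carries $\Delta_vK'$ onto $\Delta_pA'\cap\Delta_pB'$. As $\Delta_v\MS'$, and hence $\Delta_vK'$, is a finite complex and $K'$ is locally conical at $v$, this germ coincidence upgrades to an equality $f=g\circ w$ on $B_\varepsilon(v)\cap K'$ for some $\varepsilon>0$; equivalently, $\psi$ coincides with the affine map $w$ on a relatively open subset of $K'$ that affinely spans $\mathrm{aff}(K')$.

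To finish I would propagate this local coincidence over $K'$: repeating the construction at an arbitrary point of $K'$ shows that $\psi$ is, in a neighborhood of every point of $K'$, the restriction of an element of $W'$, and since two affine maps agreeing on a relatively open subset of the convex set $K'$ agree on all of $\mathrm{aff}(K')$ and hence on $K'$, the set of $q\in K'$ near which $\psi$ agrees with the particular element $w$ found above is open and closed in $K'$, so it is all of $K'$ by connectedness. This gives $f|_{K'}=(g\circ w)|_{K'}$, completing (A2) and thereby the proof of Main Result~\ref{theorem:firstmain}. (The degenerate case $K'=\{p\}$, where $\Delta_pK'$ is empty, is dealt with at once by taking for $w$ a translation of $\MS'$ carrying $v$ to $v''$.)

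The step I expect to be the crux is exactly this passage from equality of germs at a single point to equality on all of $K'$. The distance on $\MS'$ is an $\ell^1$-type metric and admits strictly more isometries than $W'$, so it is essential that the identification of $\psi$ with $w$ be read off from the \emph{simplicial} (germ and type) data of the residues rather than from distances alone; and one has to be careful with low-dimensional intersections, where $\Delta_vK'$ contains no chamber, so that $B_\varepsilon(v)\cap K'$ must be exhausted by the finitely many Weyl simplices through $v$ lying in $K'$ instead of by a single sector.
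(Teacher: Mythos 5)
Your construction of the candidate Weyl element is the same as the paper's: pass to the residue $\Delta_pX'$ (available before (A2) by Lemma~\ref{lemma:spheric}), take the apartment isomorphism $\sigma\colon\Delta_p f(\MS')\to\Delta_p g(\MS')$ fixing the intersection, and lift it through the charts to an element $w$ of the affine Weyl group whose germ at $v=f^{-1}(p)$ matches that of the transition map. Up to that point you are in step with the paper. The problem is your final ``propagation'' step. You conclude by saying that the set of $q\in K'$ near which $\psi=g^{-1}\circ f$ agrees with $w$ is open and closed in $K'$, hence all of $K'$ ``by connectedness''. But $K'$ is a convex subset of $\MS(\RS,\Lambda')$ for an arbitrary ordered abelian group $\Lambda'$, and such sets are in general totally disconnected in the metric topology (take $\Lambda'=\Q$, or a lexicographic product such as $\R\times\R$, where balls of infinitesimal radius are clopen); clopen does not imply everything, and there is no compactness to chain local agreements along a segment. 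Your fallback phrase ``two affine maps agreeing on a relatively open subset of $K'$ agree on $\mathrm{aff}(K')$'' does not rescue this, because $\psi$ is not known to be affine on $K'$ --- that is precisely the content of the lemma --- so the only affine comparison available is between the various local Weyl elements $w_q$, and identifying them all with $w$ is exactly the patching that the failed connectedness argument was supposed to provide. This is the one place where the generalized ($\Lambda$-valued) setting genuinely bites, and the paper's whole point in Lemmas~\ref{lemma:germ}--\ref{lemma:compare} is to avoid such topological local-to-global arguments.

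The correct finish needs no propagation at all, and you already have the ingredients. Having normalized so that the candidate $h:=g\circ w\circ f^{-1}$ fixes $p$ and all germs of $\Delta_p f(\MS')\cap\Delta_p g(\MS')$ (because $h$ and $\sigma$ are apartment isomorphisms agreeing on a chamber), take an \emph{arbitrary} point $k'$ of the intersection, let $S$ be the minimal Weyl simplex based at $p$ containing $k'$; by Lemma~\ref{lemma:germ} (applied to the convex intersection) the germ of $S$ at $p$ lies in the intersection, so it is fixed by $h$. Since $h$ is induced by an element of $\sW$ fixing the base point, i.e.\ is affine, fixing a germ of $S$ forces it to fix $S$ pointwise, and in particular $h(k')=k'$, which is exactly $f=(g\circ w)$ at $f^{-1}(k')$. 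Thus the global conclusion is obtained pointwise from the germ data at the single base point together with the affineness of the \emph{candidate} $h$ (not of $\psi$), which is the step your write-up is missing.
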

\proof
Denote the apartment defined by the chart $f$ by $A$, the one defined by the chart $g$ by $B$. First note that if $A$ and $B$ do not have points in common, there is nothing to prove. So suppose that there exists a point $k \in A \cap B$. Using Condition (A1) we can assume without loss of generality that $f(o)=g(o)=k$.

Consider the spherical building $\Delta_k X'$ (see Lemma~\ref{lemma:spheric}), of which $\Delta_k A$ and $\Delta_k B$ are apartments. The fundamental Weyl chamber of $\MS$ defines a certain Weyl chamber based at $k$ in the apartment $A$, and hence a chamber in $\Delta_k A$, denote this chamber by $C_A$. Similarly define $C_B$ in $\Delta_k B$. One of the axioms of spherical buildings tells us that there is an isomorphism $\sigma$ from the apartment $\Delta_k A$ to the apartment $\Delta_k B$ preserving the intersection of both. 

Let $w \in \sW$ be the unique element in the spherical Weyl group acting on $\Delta_k B$ which maps $C_B$ to $\sigma(C_A)$. Interpreting this element in the larger affine Weyl group $\aW'$ one sees that $h:= g\circ w \circ f^{-1}$ is an isometry from $A$ to $B$ mapping the germ corresponding to $C_A$ to $\sigma(C_A)$. Moreover, $h$ will fix the germs in $\Delta_k A \cap \Delta_k B$ due to the way we constructed $\sigma$ and $w$.

Showing that $f\vert_{f^{-1}(g(\MS'))} = (g\circ w )\vert_{f^{-1}(g(\MS'))}$ is equivalent to showing that if $k' \in A \cap B$ then $h(k')=k'$.  So let $k'$ be an arbitrary point in $A \cap B$ different from $k$. Let $S_A$ be the minimal Weyl simplex in $A$ based at $k$ and containing $k'$. The germ of this simplex is fixed, as it is in the intersection of $A$ and $B$ (see Lemma~\ref{lemma:germ}). It now easily follows from $h$ being an isometry that $k'$ is fixed.
\qed 

%The germs in $\Delta_k A \cap \Delta_k B$ are fixed by $h$. This follows from the fact that if $h$ maps $C_A$ to $\sigma(C_A)$ that each germ $F_A $ in $\Delta_k A$ will be mapped to $\sigma(F_A)$ in $\Delta_k B$, so those in the intersection are fixed.

This completes the proof of \ref{item:mr11} of the first Main Result.  The remaining parts are done in the subsequent section.

\subsection{Functoriality}

It remains to prove assertion \ref{item:mr13}, i.e. functoriality, of the first Main Result.

Consider two $\Lambda$--buildings $(X_1,\cA_1)$ and $(X_2,\cA_2)$. By the first assertion one obtains two $\Lambda'$--buildings $(X_1',\cA_1')$ and $(X_2',\cA_2')$ and corresponding maps $\phi_1: X_1 \to X_1'$ and $\phi_2:X_2 \to X_2'$.  Suppose we have an isometric embedding $\psi$ from $X_1$ to $X_2$, then there is a unique isometric embedding $\phi':X_1' \to X_2'$ such that the following diagram commutes.
$$
\xymatrix{
	X _1\ar[r]^{\psi}\ar[d]_{\phi_1} 	& X_2 \ar[d]^{\phi_2}\\
	X'_1 \ar[r]^{\psi'} & X'_2  
}
$$

This because each preimage of a point of $X_1'$ under $\phi_1$ is a preimage of a unique point of $X_2'$ under $\phi_2 \circ \psi$. The uniqueness of this map yields functoriality, in particular it directly implies the following lemma.

\begin{lemma}
With assumptions and notation as in the first main result let $G$ be a group acting on $X$ by isometries, then $G$ acts on $X'$ by isometries and the map $\phi$ is $G$--equivariant. % i.e. assertion \ref{item:mr13} of the first main result holds.
\end{lemma}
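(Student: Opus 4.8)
The plan is to deduce the statement directly from the universal property established just above (assertion \ref{item:mr12}). Fix $g\in G$. Since $g$ is an isometry of $(X,d)$, the composition $\psi_g\define \phi\circ g\colon X\twoheadrightarrow X'$ is again surjective and, by \eqref{equ:mr1},
$$
d'(\psi_g(x),\psi_g(y))=d'(\phi(g x),\phi(g y))=e(d(g x,g y))=e(d(x,y))=d'(\phi(x),\phi(y))
$$
for all $x,y\in X$. Thus $\psi_g$ satisfies the hypotheses of assertion \ref{item:mr12} with $(Z,d'')=(X',d')$, and we obtain an isometry $\mu_g\colon X'\to X'$ with $\mu_g\circ\phi=\phi\circ g$; this $\mu_g$ is moreover unique, since $\phi$ is surjective and the equation $\mu_g\circ\phi=\phi\circ g$ determines $\mu_g$ on all of $X'$. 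We declare $g$ to act on $X'$ by $\mu_g$.

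Next I would verify that $g\mapsto\mu_g$ is a group action. The map $\mu_{\id}$ satisfies $\mu_{\id}\circ\phi=\phi=\id_{X'}\circ\phi$, so surjectivity of $\phi$ gives $\mu_{\id}=\id_{X'}$. For $g,h\in G$ one has
$$
(\mu_g\circ\mu_h)\circ\phi=\mu_g\circ(\phi\circ h)=(\phi\circ g)\circ h=\phi\circ(gh),
$$
and since $\phi$ is onto this forces $\mu_{gh}=\mu_g\circ\mu_h$. Hence $G$ acts on $X'$ by isometries of the $\Lambda'$--metric space $X'$, and the defining identity $\mu_g\circ\phi=\phi\circ g$ is exactly the asserted $G$--equivariance of $\phi$. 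The functoriality part of \ref{item:mr13} is then read off the commutative square constructed for isometric embeddings $\psi\colon X_1\to X_2$ right before this lemma, applied with the isometry $g$ in place of $\psi$.

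Should one prefer to avoid invoking \ref{item:mr12}, the same conclusion is reached by a one-line direct argument: since $g$ preserves $d$ it preserves the relation $\sim$ (because $d(g x,g y)=d(x,y)$ lies in $\ker e$ whenever $d(x,y)$ does), so $g$ descends to a well-defined self-map $\bar g\colon X'\to X'$ of the quotient $X'=X/\!\!\sim$ via $\bar g(\phi(x))=\phi(g(x))$; the displayed computation above (now read as $d'(\bar g\phi(x),\bar g\phi(y))=d'(\phi(x),\phi(y))$) shows $\bar g$ is a $d'$--isometry, and $\bar g\circ\phi=\phi\circ g$ yields the equivariance and the homomorphism property exactly as before.

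I do not expect any genuine obstacle here. The only point requiring a word of care is the well-definedness of the induced self-map of $X'$ — equivalently, the legitimacy of the uniqueness step in \ref{item:mr12} — and this is immediate from $G$ acting by isometries, hence preserving both $d$ and the subgroup $\ker e$. If, in addition, each $g$ maps the atlas $\cA$ to itself (for instance when $\cA$ is taken to be the complete apartment system), the very same argument shows that $G$ in fact acts by automorphisms of the affine building $(X',\cA')$, since then $\mu_g(\cA')=\mu_g(\phi(\cA))=\phi(g(\cA))=\phi(\cA)=\cA'$; but for assertion \ref{item:mr13} as phrased, acting by isometries of $X'$ is all that is required.
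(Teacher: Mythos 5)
Your proposal is correct and amounts to spelling out exactly what the paper leaves implicit: the paper's entire proof is ``by functoriality,'' referring to the commutative square for isometric embeddings constructed just before the lemma, and your argument (whether via the universal property of assertion \ref{item:mr12} or via the direct observation that an isometry preserves $\sim$ and hence descends to the quotient) is precisely the content of that appeal. The verification that $g\mapsto\mu_g$ is a homomorphism via surjectivity of $\phi$ is a welcome detail the paper omits, but it is the same approach.
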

%\begin{proof}
%By functoriality.
%Define the $G$--action on $X'$ by $g.x'=\phi(g.x)$. This is well defined since the action of $G$ on $X$ is by isometries and therefore $d(x,y)\in \ker(e)$ if and only if $d(g.x, g.y) \in \ker(e)$ for arbitrary $x$ and $y$ in $X$. It is clear by definition that $\phi$ is $G$--equivariant.
%\end{proof}

This completes the proof of the first Main Result.

% %%%%%%%%%%%%%%%%%%%%%%%%%%

% %%%%%%%%%%%%%%%%%%%%%%%%%%%
% Proof of second main result

\section{Proof of the second Main Result}\label{section:secondMain}
In  this section we tackle the second Main Result, where we prove that the fibers of the map discussed in the previous section are again generalized affine buildings. The set of apartments on such a fiber will be the non-empty intersections of apartments with this fiber. Conditions (A1)-(A3) and (A5) will be more or less direct consequences from the corresponding conditions for the original building. In order to prove conditions (A4) and (A6) we consider the structure at infinity of the fiber, making use of results in Section~\ref{section:locs}.

Let $(X,\cA)$ be an affine building with model space $\MS\define \MS(\RS, \Lambda)$. As always assume that $\Lambda$ admits an $F$--module structure and let $e:\Lambda\to \Lambda'$ be an epimorphism of ordered abelian $F$--modules. The kernel of $e$ is again an ordered abelian group admitting a natural $F$--module structure.

As defined at the beginning of Section~\ref{section:firstMain}, let $\phi$ be the base change functor associated to $e$. According to the first Main Result, the image of $\phi$ carries the structure of a $\Lambda'$--building $(X',\cA')$ with model space $\MS':=\MS(R,\Lambda')$.%=\phi(\MS)$.

%, Then for arbitrary $x\in\MS$ the set $\phi^{-1}(\phi(x))$ is canonically isomorphic to $\MS''\define\MS(R, \ker(e))$.

Fix some point $x\in X$ for the remainder of the section. Define $X''$ to be the set $\phi^{-1}(\phi(x))$. This will be the set of points of the $\ker(e)$--building we want to construct. Denote by $o$ the point in $\MS$ corresponding to the zero vector. Let $\MS''$ be the points $y$ in $\MS$ such that $d(o,y) \in \ker e$. Using the coordinate description of $\MS$ it follows directly that $\MS''$ can be identified with $\MS(\R,\ker(e))$. The Weyl group $\aW'' = \sW T''$ of this model space, with $T''=T\cap \MS''$, can canonically be interpreted as a subgroup of $\aW$. Note that the elements of $W''$ are exactly those elements $w$ of $W$ such that $w.o \in \MS''$. 

Let $\widetilde{\cA}$ be the set of charts $f$ in $\cA$ such that $\phi(x) \in \phi(f(\MS''))$. As the charts in $\cA$ are isometries, it follows from the triangle inequality that for each $f \in \widetilde{\cA}$ one has $f(\MS'') = f(\MS)\cap X''$. From this we can define injections $\cA''=\{f\vert_{\MS''} : f\in\widetilde{\cA}\}$ from $\MS''$ into $X''$. We now claim that $(X'',\cA'')$ is a $\ker(e)$--building.

%\begin{lemma}
%Given $x\in X$. For all $f\in\widetilde{\cA}$ the following is true:
%\begin{enumerate}
%  \item $A_f $ is canonically isomorphic to $\MS''=\MS(\R,\ker(e))$ and
%  \item $f(A_f) = f(\MS)\cap X''$.
%\end{enumerate}
%\end{lemma}

A first observation is that apartments in $X''$ will be intersections of $X''$ with apartments of $X$ containing a point of $X''$ (because of Condition (A1) and that for all $f \in \widetilde{\cA}$ one has that $ f(\MS'') = f(\MS)\cap X''$).

\begin{lemma}\label{lemma:MR2_1}
The pair $(X'',\cA'')$ satisfies Conditions (A1)-(A3) and (A5).
\end{lemma}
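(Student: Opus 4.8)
The goal is to verify axioms (A1), (A2), (A3) and (A5) for the pair $(X'',\cA'')$, where $X''=\phi^{-1}(\phi(x))$ sits inside $X$, its apartments are the traces $f(\MS)\cap X''=f(\MS'')$ of apartments of $X$ meeting $X''$, and the metric is the restriction of $d$ (which takes values in $\ker(e)$ on $X''\times X''$ by construction). The guiding principle throughout is that every structure on $X''$ is the restriction of the corresponding structure on $X$, so the plan is to deduce each axiom for $X''$ from the same axiom for $(X,\cA)$ together with the already-established compatibility results of Section~\ref{section:firstMain} (especially Lemma~\ref{lemma:compare} and its corollaries) which tell us that intersections with $X''$ behave well.

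First I would dispatch (A1): for $f\in\widetilde{\cA}$ and $w\in\aW''=\sW T''$, the chart $f\circ w$ lies in $\cA$ by (A1) for $X$, and since $w$ preserves $\MS''$ (as $w.o\in\MS''$ forces $w(\MS'')=\MS''$ by the $\sW$-invariance of $\MS''$ and $T''=T\cap\MS''$), the restriction $(f\circ w)|_{\MS''}$ is again a chart in $\cA''$; conversely one checks $\aW''$ acts transitively enough. Axiom (A3) is similarly quick: given $y,z\in X''$, pick by (A3) for $X$ an apartment $A=f(\MS)$ containing both; since $y\in A\cap X''$ we have $f\in\widetilde{\cA}$, hence $f|_{\MS''}\in\cA''$ and $y,z\in f(\MS'')$. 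Axiom (A5) I expect to be essentially free: given an apartment $A''=A\cap X''$ of $X''$ and $p\in A''$, the retraction $r_{A,p}:X\to A$ of $X$ restricts to a map $X''\to A''$ — one must observe that $r_{A,p}$ maps $X''$ into $A''$, which follows because $r_{A,p}$ is distance non-increasing and fixes $p$, so $d(r_{A,p}(q),p)\le d(q,p)\in\ker(e)$ for $q\in X''$, placing $r_{A,p}(q)$ in $A\cap X''$ — and the restriction inherits the non-increasing and fibre-over-$p$ properties directly. Alternatively one invokes the remark after Definition~\ref{Def_LambdaBuilding} that (A5) follows from the others (via \cite{Schwer}).

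The real work is (A2). Take $f,g\in\widetilde{\cA}$ with $A''=f(\MS'')$, $B''=g(\MS'')$ and $A''\cap B''\ne\emptyset$, so the ambient apartments $A=f(\MS)$, $B=g(\MS)$ of $X$ intersect. The first task is to show $f|_{\MS''}^{-1}(B'')$ is convex in $\MS''$; for this I would argue that $A''\cap B''=(A\cap B)\cap X''$, and that intersecting a convex subset of $A$ with $X''$ — i.e. with the "slab" $\{q\in A: d(q,p)\in\ker e\ \text{for some/any }p\}$ — yields a convex subset of $A''$, because $X''\cap A$ is itself convex in $A$ (it is cut out by the finitely many conditions $\langle\cdot,\alpha^\vee\rangle\in\ker e$, using that $\ker e$ is a convex subgroup of $\Lambda$), and finite intersections of convex sets are convex; pulling back through the isometry $f$ preserves convexity. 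The second task, existence of $w''\in\aW''$ with $f|_{f^{-1}(g(\MS''))}=(g\circ w'')|_{f^{-1}(g(\MS''))}$, I would obtain from (A2) for $(X,\cA)$: that gives $w\in\aW$ with $f=g\circ w$ on $f^{-1}(g(\MS))\supset f^{-1}(g(\MS''))$; the point is then that $w$ must actually lie in $\aW''$. Here is where I expect the main obstacle: one must show that the transition element $w$ respects the $\ker(e)$-structure, equivalently that $w(\MS'')=\MS''$, equivalently that $w.o\in\MS''$. This should follow because $A''\cap B''$ is non-empty: picking $p$ in it, $p=f(p_0)=g(p_1)$ with $p_0,p_1\in\MS''$, and $p_0=w(p_1)$, so $w$ moves the $\MS''$-point $p_1$ to the $\MS''$-point $p_0$; since $w=\bar w\, t$ with $\bar w\in\sW$ and $t$ a translation, and $\sW$ preserves $\MS''$, the translation part must preserve $\MS''$ too, i.e. $t\in T''$, giving $w\in\aW''$. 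I would write this last reduction carefully, as it is the one genuinely non-formal point; everything else is bookkeeping that transfers the corresponding fact about $X$.
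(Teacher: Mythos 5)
Your overall route is the same as the paper's: realize every structure of $(X'',\cA'')$ as the restriction of the corresponding structure of $(X,\cA)$, prove (A1) by checking that $f\circ w$ again lies in $\widetilde{\cA}$ for $w\in\aW''$, deduce (A3) from (A3) for $X$, obtain (A5) by restricting the retraction $r_{A,x}$ (your observation that it maps $X''$ into $A\cap X''$ because it is distance non-increasing is exactly the point), and settle the Weyl-group half of (A2) by showing the transition element carries a point of $\MS''$ to a point of $\MS''$ and hence has translation part in $T''$. The one step whose justification does not hold up is the convexity half of (A2). You assert that $A\cap X''$ is convex in $A$ ``because it is cut out by the finitely many conditions $\langle\cdot,\alpha^\vee\rangle\in\ker e$''. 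With the paper's definition of convexity (a finite intersection of half-apartments) this is false whenever $\ker e$ is a nontrivial proper convex subgroup: such a subgroup is bounded and has no largest element, so it cannot be written as a finite intersection of half-lines in $\Lambda$, and correspondingly $\{y\in\MS:\langle y,\alpha^\vee\rangle\in\ker e\}$ is not a finite intersection of half-apartments of $\MS$; order-convexity of $\ker e$ is not enough here. More importantly, even if this were granted, your argument would only yield convexity of $f^{-1}(g(\MS))\cap\MS''$ inside $\MS=\MS(\RS,\Lambda)$, whereas (A2) for $(X'',\cA'')$ requires convexity inside the model space $\MS''=\MS(\RS,\ker e)$, i.e.\ a finite intersection of half-apartments of $\MS''$, and your proposal never engages with half-apartments of $\MS''$ at all. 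The repair is short and is precisely the paper's argument: a half-apartment $\{y:\langle y,\alpha^\vee\rangle\geq k\}$ of $\MS$ meets $\MS''$ in all of $\MS''$ (if $e(k)<0$), in the empty set (if $e(k)>0$), or in the half-apartment $\{y\in\MS'':\langle y,\alpha^\vee\rangle\geq k\}$ of $\MS''$ (if $k\in\ker e$); hence the trace on $\MS''$ of any convex subset of $\MS$, in particular of $f^{-1}(g(\MS))$, is convex in $\MS''$.

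A smaller inaccuracy occurs in (A3): from $y\in f(\MS)\cap X''$ you conclude $f\in\widetilde{\cA}$, but membership in $\widetilde{\cA}$ is a property of the chart, not of its apartment; it requires $\phi(x)\in\phi(f(\MS''))$, which can fail if $f(o)$ is not at $\ker e$-distance from $X''$ even though $f(\MS)$ meets $X''$. One first precomposes $f$ with the translation in $\aW$ taking $o$ to $f^{-1}(y)$ (allowed by (A1) for $X$); the translated chart lies in $\widetilde{\cA}$ and its restriction is an apartment of $X''$ containing $y$ and $z$, using that $d(y,z)\in\ker e$ because $\ker e$ is a convex subgroup. With these two corrections your proof coincides with the one in the paper.
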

\proof
To prove (A1) let $f''$ be an element of $\cA''$ and $w\in W''$. By definition $f''$ is the restriction of a chart $f\in \widetilde{\cA}$. Condition (A1) applied to $(X,\cA)$ implies that $f\circ w\in\cA$. As $\phi(x) \in \phi(f(\MS''))=\phi((f\circ w)(\MS''))$, one has that $f \circ w \in \widetilde{\cA}$. So $f \circ w\vert_{\MS''} \in \cA''$. This chart equals $f'' \circ w$ over $\cA''$, so we have proven (A1). 

 %Since $f$ is an element of $\widetilde{\cA}$ the point $\phi(x)$ is contained in $\phi(f(\MS))$ and there exists hence $y$ in $f(\MS)$ such that $d(x,y)\in \ker(e)$. The point $w^{-1}.y$ is contained in $f\circ w(\MS)$. To calculate the distance of $y$ and $w^{-1}.y$ observe that by $W$-invariance of the metric $d$ we may assume that $y=0$ and $w=t$ for some translation element $t$ in $T'' \cong \ker(e)^n$. Therefore
%$d(y, w^{-1}.y)=d(0,t)$ which is contained in the kernel of $e$. Hence 
%$$
%d(x,w^{-1}.y)\leq d(x,y)+d(y,w^{-1}.t)\in \ker(e)
%$$
%and we can conclude that $\phi(x)=\phi(w^{-1}.y)$. This completes the proof of (A1).

The first part of (A2) is easily seen to be true because the intersection of a convex set in $\MS$ with $\MS''$ is a convex set of $\MS''$ (as the intersection of a finite number of half-apartments of $\MS$ with $\MS''$ is the intersection of a finite number of half-apartments of $\MS''$). The second part follows if we can show that if one has $f \circ w (y) = g(y)$ with $f,g \in \widetilde{\cA}$, $w\in W$ and $y \in \MS''$, then $w \in W''$. If this was not the case then $w.y \notin \MS''$ and $(f \circ w) (y) \notin X''$ as $f(\MS'') = f(\MS)\cap X''$. But $g(y) \in X''$, so we can conclude that $w \in W''$ and that (A2) holds.

Condition (A3) follows from Condition (A3) for $(X,\cA)$ and the fact that apartments in $X''$ are the intersections of $X''$ with apartments of $X$ containing a point of $X''$.

The last condition we want to prove here is Condition (A5). If we are given a point $y$ and apartment $A''$ of $(X'',\cA'')$ containing this point, we know that there exists an apartment $A$ of $X$ such that $A\cap X'' = A''$. Let $r_{A,y}$  be a retraction with respect to $A$ and $y$ as implied in Condition (A5) for $(X,\cA)$. This retraction maps points of $X''$ to points of $ A \cap X'' = A''$  by the triangle inequality, and does not increase distances. Hence it follows that $(X'',\cA'')$ satisfies (A5).\qed

%observe that %according to Lemma \ref{lemma:tinyapartments} 
%apartments of $X''$ are, as sets,  intersections of apartments of $X$ with $X''$. Hence  the restriction to $X''$ of a retraction based at $x$ onto an apartment $A$ of $X$ such $A\cap X'' = A''$ satisfies the desired conditions, since retractions are distance non-increasing.

To finish the proof of the second Main Result we have to verify that (A4) and (A6) hold as well. This is done in the subsequent propositions.

\begin{prop}\label{prop:MR2_2}
The pair $(X'',\cA'')$ satisfies (A4).
\end{prop}
\proof
Let $S''$ and $T''$ be two Weyl chambers of $(X'',\cA'')$. These Weyl chambers are restrictions  to $X''$ of (not necessarily unique) Weyl chambers $S$ and $T$ of $(X,\cA)$. These two Weyl chambers give rise to chambers $\Delta_{\phi(x)}\phi(S)$ and $\Delta_{\phi(x)}\phi(T)$ of the spherical building $\Delta_{\phi(x)}X'$. Lemma~\ref{lemma:inap} implies that there exists an apartment $A'$ of $(X',\cA')$ containing both germs. By construction of $(X',\cA')$ there also exists an apartment $A$ of $(X,\cA)$ such that $\phi(A) = A'$. Let $S'$ and $T'$ be Weyl chambers of $A$, both based at some point $y \in X''$, for which the images correspond respectively to the germs $\Delta_{\phi(x)}\phi(S)$ and $\Delta_{\phi(x)}\phi(T)$.

The (non-empty) intersection $X'' \cap A$ is an apartment $A''$ of $(X'',\cA'')$. The Weyl chambers $S$ and $S'$ of $(X,\cA)$ both give rise to the same germ $\Delta_{\phi(x)}\phi(S)$ of $(X',\cA')$, so Lemma~\ref{lemma:germs} implies that the apartment $A''$ of $(X'',\cA'')$ contains a sub-Weyl chamber of $S''$. A similar argument asserts that it also contains a sub-Weyl chamber of $T''$. This proves (A4) for $(X'',\cA'')$. 
\qed

As the remaining Condition (A6) is not needed to define the spherical building of infinity, we can consider this  structure $\partial_{\cA''} X''$ for $(X'',\cA'')$. The next lemma shows that this spherical building is in fact the residue $\Delta_{\phi(x)}X'$.

\begin{lemma}\label{lemma:infinity}
There is a canonical isomorphism between $\partial_{\cA''} X''$ and $\Delta_{\phi(x)}X'$.
\end{lemma}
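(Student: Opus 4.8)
The plan is to construct the isomorphism $\Psi: \partial_{\cA''} X'' \to \Delta_{\phi(x)}X'$ explicitly on simplices, using the fact (already established) that apartments of $X''$ are exactly the sets $A \cap X''$ where $A$ is an apartment of $X$ meeting $X''$, and that $\phi(A)$ is then an apartment of $X'$ through $\phi(x)$. Recall that a simplex of $\partial_{\cA''} X''$ is a parallel class $\partial_{X''} S''$ of a Weyl simplex $S''$ of $X''$. First I would note that $S''$ is the restriction to $X''$ of a Weyl simplex $S$ of $X$ based at some point $y \in X''$, and since $\phi(y) = \phi(x)$ (as $d(y,x) \in \ker e$, because $y \in X''$), the germ $\Delta_{\phi(x)}\phi(S)$ is a well-defined simplex of $\Delta_{\phi(x)}X'$. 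I would define $\Psi(\partial_{X''} S'') \define \Delta_{\phi(x)}\phi(S)$. The content of Corollary~\ref{cor:sectors2} together with Lemma~\ref{lemma:germs} shows this is independent of the choice of parallel representative $S''$ and of the lift $S$: two parallel Weyl simplices in $X''$ have sub-Weyl-chambers agreeing on $X''$ (after translating base points, using that the base points have distance in $\ker e$), and by Lemma~\ref{lemma:germs} this is precisely the condition for their images under $\phi$ to have the same germ at $\phi(x)$.

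Next I would check that $\Psi$ is a bijection. Surjectivity: every chamber of $\Delta_{\phi(x)}X'$ is $\Delta_{\phi(x)}\phi(T)$ for some Weyl chamber $T$ of $X$ based at a point $y$ with $\phi(y)=\phi(x)$, i.e. $y \in X''$; then $T \cap X''$ is a Weyl chamber of $X''$ (since $X''$ is an apartment intersection locally, using (A4)/germ arguments, or directly since the minimal Weyl simplex lifts) and $\Psi$ sends its parallel class onto that chamber, and similarly for lower-dimensional simplices. Injectivity is exactly the forward implication combined with the reverse implication of Lemma~\ref{lemma:germs}: if $\Psi(\partial_{X''}S_1'') = \Psi(\partial_{X''}S_2'')$, i.e. $\Delta_{\phi(x)}\phi(S_1) = \Delta_{\phi(x)}\phi(S_2)$, then by that lemma there is a point $x' \in X'' \cap S_1 \cap S_2$ whose sub-Weyl chambers agree on $X''$, which says precisely that $S_1''$ and $S_2''$ are parallel in $X''$, hence represent the same simplex of $\partial_{\cA''}X''$. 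Then I would verify $\Psi$ preserves the face relation in both directions: containment $\partial_{X''}S_1'' \subset \partial_{X''}S_2''$ means there are representatives in a common apartment $A'' = A \cap X''$ with the same base point and $S_1' \subset S_2'$, and applying $\phi$ gives germs in $\Delta_{\phi(x)}\phi(A)$ with $\phi(S_1') \subset \phi(S_2')$ and the same base point $\phi(x)$, so $\Delta_{\phi(x)}\phi(S_1) \subset \Delta_{\phi(x)}\phi(S_2)$; the converse uses the corollary after Lemma~\ref{lemma:germs} (that apartments identical after $\phi$ were identical before) to descend a common apartment in $X'$ back to one in $X$ meeting $X''$.

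Finally I would observe that $\Psi$ sends apartments of $\partial_{\cA''}X''$ to apartments of $\Delta_{\phi(x)}X'$: the apartment of $\partial_{\cA''}X''$ coming from $A'' = A \cap X''$ consists of parallel classes of Weyl simplices of $A''$, which under $\Psi$ map to the germs at $\phi(x)$ of Weyl simplices of $\phi(A)$ based at $\phi(x)$, i.e. the apartment $\Delta_{\phi(x)}\phi(A)$. Since $\Psi$ is a face-relation-preserving bijection carrying apartments to apartments and is manifestly induced by the natural maps ($S'' \mapsto S \mapsto \phi(S)$), it is an isomorphism of chamber complexes, hence of spherical buildings, and it is canonical because at no point was a non-canonical choice needed that affected the output.

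I expect the main obstacle to be the independence/well-definedness step: one must carefully reconcile parallelism in $X''$ (an equivalence on Weyl simplices of the $\ker(e)$-building) with the germ relation at $\phi(x)$ in $X'$, since a parallel class in $X''$ need not have a canonical base point while a germ is based at the fixed point $\phi(x)$. The key device for this is Corollary~\ref{cor:sectors2} (producing, for parallel Weyl chambers with base points at $\ker e$-distance, an isometry moving points within their $\sim$-classes) fed into Lemma~\ref{lemma:germs}; making the translation between "sub-Weyl chambers agree restricted to $X''$" and "equal germs under $\phi$" airtight in both directions, and handling non-maximal faces correctly (via Lemma~\ref{lemma:germ}), is where the real care is needed. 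Everything else is bookkeeping with the already-proven building axioms for $(X'',\cA'')$ and the local building structure of $\Delta_{\phi(x)}X'$ from Lemma~\ref{lemma:spheric}.
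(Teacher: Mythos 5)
Your proposal is correct and follows essentially the same route as the paper: the paper's (very terse) proof also uses Lemma~\ref{lemma:germs} to establish the chamber correspondence between $\partial_{\cA''}X''$ and $\Delta_{\phi(x)}X'$ and then notes that adjacency is preserved, while you spell out the same construction in more detail (well-definedness via Corollary~\ref{cor:sectors2}, face relations, apartments). No gap; your elaboration just makes explicit what the paper declares ``easily seen''.
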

\proof 
Lemma~\ref{lemma:germs} implies a bijection between the chambers of these two spherical buildings. It is easily seen that this bijection preserves adjacency, hence it defines an isomorphism between the two buildings.
\qed

\begin{prop}\label{prop:MR2_3}
The pair $(X'',\cA'')$ satisfies (A6).
\end{prop}
\proof
Let $A_1,A_2,A_3$ be three apartments of $X''$ which pairwise intersect in half-apartments. The boundaries $\partial A_i$ intersect as well in half-apartments and correspond to apartments $a_i$ in the residue of $\phi(x)$ in $X'=\phi(X)$ by Lemma~\ref{lemma:infinity}. There are two cases: either a) there exists a half-apartment $\alpha$ in $\Delta_{\phi(x)}X'$ such that $a_i\cap a_j = \alpha$ for all $i\neq j$ with $\{i,j\}\subset \{1,2,3\}$ , or b) the intersections $a_i\cap a_j$ are distinct for all three pairs of elements of $\{1,2,3\}$, that is $a_1\cup a_2\cup a_3 = (a_1\cap a_2) \cup (a_1\cap a_3) \cup (a_2\cap a_3)$. 

In case b) choose an apartment $B_1'$ in $X'$ such that $\Delta_{\phi(x)}B_1' = a_1$. Let $c$ be a chamber in $(a_2\cap a_3)$ sharing a panel with $a_1$. Let $T$ be a Weyl chamber $T$ based at $\phi(x)$ with germ $c$ such that $\partial T$ shares a panel with $\partial B_1'$. Then there exist precisely two Weyl chambers $S_2, S_3$ in $B_1'$ such that $\partial S_i$ is opposite $\partial T$ in $\partial_{\cA'}X'$ for $i=2,3$. By construction $\Delta_{\phi(x)}S_i$ is opposite $c$ in $\Delta_{\phi(x)}X'$. Hence there exist, by item~\ref{list:5.23} of Proposition~\ref{prop:list}, unique apartments $B_2'$ and $B_3'$ of $(X',\cA')$ containing $T$ and $S_2$, respectively $S_3$. It is easily seen that the apartments $B_i'$ pairwise intersect in half-apartments. From the definition of apartments in $(X',\cA')$ if follows that there are three apartments $B_i$ such that $\phi(B_i) = B_i'$ for all $i=1,2,3$. These again intersect pairwise in half-apartments, see the comments on Condition (A6) in Section~\ref{section:locs}. We conclude that the intersection of all three $B_i$ is non-empty. Due to the construction there exists a point $y\in X''$ in this intersection.

Lemma~\ref{lemma:germs} implies  that the boundary of $B_i\cap X''$ in $X''$ is the same as the boundary of $A_i$, since their images under $\phi$ induce the same germs in $\Delta_{\phi(x)}X'$. 
Therefore $B_i\cap X'' = A_i$ for all $i$. 
The fact that $y$ is contained in $B_1\cap B_2\cap B_3$ implies that $A_1\cap A_2\cap A_3 = B_1\cap B_2\cap B_3\cap X''\neq \emptyset$, and that (A6) holds for $X''$ in case b).

Assume that we are in case a). Let $c$ be a chamber of the half-apartment $\alpha$. With this chamber there corresponds a parallel class of Weyl chambers of $(X'',\cA'')$. By applying Condition (A4) any two of these Weyl chambers contain (at least) a common sub-Weyl chamber. If one takes three Weyl chambers $S_1$, $S_2$ and $S_3$ in this parallel class respectively in apartments $A_1$, $A_2$ and $A_3$, then it follows that these three Weyl chambers have a common intersection, so also the apartments have a common intersection. This completes the proof of the proposition.
\qed

%By construction $\phi(x)$ is contained in their intersection which is, using (A6) of $X'$ nonempty. 
%The intersection $B_1\cap B_2\cap B_3$ is nonempty by (A6) for $X$.

%Choose Weyl chambers $S_i', i=1,2,3,$  and $T'$ in $X'$ with basepoint $\phi(x)$ such that the germ of $S_i'$ is contained in $a_i\!\setminus\!\alpha$ for all $i$ , such that the germ of $T'$ is contained in $\alpha$ and such that $\partial T'$ is opposite $\partial S_i'$ in $\partial X'$ for all $i$. Then $\Delta_{\phi(x)} T'$ is opposite $\Delta_{\phi(x)} S_i'$ as well. Item \ref{list:5.23} of Proposition \ref{prop:list} implies the existence of three unique apartments $B_i'$, $i=1,2,3$, containing $T'$ and $S_i'$. The intersection $B_1'\cap B_2'\cap B_3'$ is nonempty and contains $\phi(x)$. Again there are corresponding apartments $B_i$ in $X$ which are mapped onto $B_i'$ via the base change functor. Since $A_i$ and $B_i$ define the same germs in $X'$ after applying $\phi$ Lemma~\ref{lemma:germs} implies that $A_i=B_i\cap X''$. Axiom (A6) for $X$ and the fact that $\phi(x)$ is contained in the intersection of the $B_i'$ implies that 
%$$
%A_1\cap A_2\cap A_3 = B_1\cap B_2\cap B_3\cap X'' \neq \emptyset.
%$$
%This completes the proof of the proposition.

%It is clear, that an isometric action on $X$ induces an isometric action of $G_x$ on $X''$, since $d(x,y)= d(g.x, g.y)=d(x, g.y)$ for all $g\in G_x$ and all $x,y\in X''$.

Main Result~\ref{theorem:secondmain} now follows from combining Lemma~\ref{lemma:MR2_1} and Propositions~\ref{prop:MR2_2} and~\ref{prop:MR2_3}.

% %%%%%%%%%%%%%%%%%%%%%%%%%%%

% %%%%%%%%%%%%%%%%%%%%%%%%%%%
% Proof of third main result
\section{Proof of the third Main Result}\label{section:thirdMain}

In this section we discuss the case where $e: \Lambda \mapsto \Lambda' $ is a monomorphism. The strategy will be to add points to each apartment to give it a $\Lambda'$--structure. The first part of the proof deals with defining this extension rigourously. After this we check that the obtained structure is indeed the generalized affine building we want. 

The main difficulty lies into showing Condition (A3), i.e. that every two points lie in a common apartment. This is not surprising as the construction consists of adding new points. The condition will eventually follow from a covering result for pairs of apartments (see Proposition~\ref{prop:cover}).

Let $\RS$ be a root system and let $F$ be a subfield of $\R$ containing the set of evaluations of co-roots on roots. Let $e:\Lambda \mapsto \Lambda' $ be a monomorphism of ordered abelian groups $\Lambda$ and $\Lambda'$ both admitting an $F$--module structure.

Using this monomorphism one can define a natural embedding of the model space $\MS:=\MS(\RS,\Lambda)$ into $\MS':=\MS(\RS,\Lambda')$ as follows. 
Choose a basis $B$ of $\RS$, then each element $x$ of $\MS$ has a presentation as $x=\sum_{\alpha\in B} \lambda_\alpha \alpha$. Define $\iota: \MS\mapsto\MS'$ by
$$
x=\sum_{\alpha\in B} \lambda_\alpha \alpha \longmapsto \iota(x)=\sum_{\alpha\in B}e(\lambda_\alpha)\alpha.
$$
Since $e$ is a monomorphism $\iota$ is injective. Given points $x=\sum_{\alpha\in B} \lambda_\alpha \alpha$ and $y=\sum_{\alpha\in B} \mu_\alpha \alpha$ in $\MS$ we can calculate the distance $d'$ in $\MS'$ of the images under $\iota$ of  these two points of $\MS$:

\begin{align*}
d'(\iota(x),\iota(y)) 
	& = \sum_{\beta\in\RS^+} \vert\langle \iota(y)-\iota(x),\beta^\vee\rangle\vert \\
	& = \sum_{\beta\in\RS^+} \vert\langle \sum_{\alpha\in B} (e(\mu_\alpha) - e(\lambda_\alpha))\alpha,\beta^\vee\rangle\vert \\
%	& = \sum_{\beta\in\RS^+} \vert \sum_{\alpha\in B} e(\mu_\alpha-\lambda_\alpha) \langle \alpha,\beta^\vee\rangle \vert \\
	& = e( \sum_{\beta\in\RS^+} \vert \sum_{\alpha\in B} (\mu_\alpha-\lambda_\alpha) \langle \alpha,\beta^\vee\rangle \vert ) \\ 
	& = e(d(x,y)),\\
\end{align*}

hence $\iota$ is an embedding of $\MS$ into $\MS'$.
Similarly one can embed the affine Weyl group $\aW$ of $\MS$ into the affine Weyl group $\aW'$ of $\MS'$. Each element $w$ of $\aW$ is a product of a translation $t$ and an element $\overline{w}\in\sW$ of the spherical Weyl group. Since elements of the translation subgroup $T$ of $\aW$ (respectively $T'$ of $\aW'$) can canonically be identified with points in $\MS$, (respectively points in $\MS'$), we can define a group monomorphism $\hat{\iota}:\aW\to\aW'$ by putting $w=\overline{w}t \longmapsto \hat{\iota}(w)\define \overline{w}\iota(t)$. Hence $\aW$ can be naturally identified with a subgroup of $\aW'$.

Let $H$ be a half-apartment of $\MS$. If $\iota$ is not an isomorphism then the image of $H$ under it is not a half-apartment of $\MS'$, but one can easily find a half-apartment $H'$ of $\MS'$ such that $a\in H $ if and only if $\iota(a) \in H'$. This allows us to define a map $\tilde{\iota}$ from the half-apartments of $\MS$ to half-apartments of $\MS'$. As each convex subset of $\MS$ is the intersection of a finite number of half-apartments of $\MS$, one can extend the map $\tilde{\iota}$ to send convex subsets of $\MS$ to convex subsets of $\MS'$, such that for each point $a$ and convex subset $K$ of $\MS$: $a\in K \Leftrightarrow \iota(a) \in \tilde{\iota}(K)$. It also easily follows for a convex subset $K$ of $\MS$ and group element $w \in \aW$ that $\tilde\iota(w.K)= \hat{\iota}(w).\tilde{\iota}(K)$.

Let $(X,\cA)$ be an affine building with model space $\MS=\MS(\Lambda,\RS)$ and $\iota$ an embedding of $\MS$ into $\MS'$ induced by a monomorphism $e$ as above. Using a quotient construction on $\cA \times \MS'$ we will define a space $X'$ and a set of charts $\cA'$ in Definition~\ref{def:extension} which will turn out to be the generalized affine building whose existence is claimed in the third Main Result. 

Denote the set of pairs $\cA \times \MS'$ by $\widetilde{X}$. Let $(f,a)$ and $(g,b)$ be two pairs of $\widetilde{X}$. By Condition (A2) we know that the inverse image $Z\define f^{-1}(g(\MS))$ is a convex subset of $\MS$ and that there exists a $w \in \aW$ such that $f\vert_Z = g \circ w\vert_Z$. We now define $(f,a)$ and $(g,b)$ to be equivalent (denoted by $(f,a) \sim (g,b)$) if and only if $a \in \tilde{\iota}(Z)$ and $\hat{\iota}(w). a =b$.  This is independent of the choice of $w \in \aW$, as an other choice $w'$ would necessarily have the same action on $Z$, and so also the action of $\iota(w)$ and $\iota(w')$ on $\tilde{\iota}(Z)$ would be the same. 

A first goal is now to prove that ``$\sim$'' is an equivalence relation.

%The inverse image $Z\define f^{-1}(g(\MS))$ is the intersection of finitely many half-apartment in $\MS$. Denote by $Z'$ the intersection of the images of these half-apartments under $\iota$ in $\MS'$. Note that $\iota(Z)\subset Z'$ but not $\iota(Z)=Z'$. By axiom $(A2)$ there is an element $w\in \aW$ such that $f\vert_Z= g\circ w\vert_Z$. Let $w'\define\iota(w)$ and define $(f,a)\sim (g,b)$ if and only if $a\in Z$ and $w'a = b$.

\begin{lemma}\label{lemma:ltrans}
Let $(f,a)$ and $(g,b)$ be two pairs of $\widetilde{X}$ and $v\in W$, then $(f,a)  \sim (g,b) \Leftrightarrow (f,a) \sim (g \circ v^{-1},\hat{\iota}(v).b)$.
\end{lemma}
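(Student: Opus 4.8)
The plan is to unwind both sides of the claimed equivalence directly from the definition of $\sim$ on $\widetilde X$, tracking how the Weyl group element recording the transition changes when we postcompose $g$ with $v^{-1}$. First I would write down what $(f,a)\sim(g,b)$ means: by definition there is a convex set $Z=f^{-1}(g(\MS))$ and a $w\in\aW$ with $f|_Z=g\circ w|_Z$, and the relation holds iff $a\in\tilde\iota(Z)$ and $\hat\iota(w).a=b$. Then I would analyze the pair $(g\circ v^{-1},\hat\iota(v).b)$: its associated convex set is $\widetilde Z:=f^{-1}\big((g\circ v^{-1})(\MS)\big)$, and I claim $\widetilde Z=Z$ since $v^{-1}$ is a bijection of $\MS$, so $(g\circ v^{-1})(\MS)=g(\MS)$. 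On $Z$ we then have $f|_Z=g\circ w|_Z=(g\circ v^{-1})\circ(v\circ w)|_Z$, so the transition element for the new pair is $v\circ w$ (again well-defined up to the stabilizer of $Z$, which is harmless by the same remark as in the definition).

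With this bookkeeping in place the equivalence becomes a short computation. The condition $(f,a)\sim(g\circ v^{-1},\hat\iota(v).b)$ reads $a\in\tilde\iota(Z)$ and $\hat\iota(v\circ w).a=\hat\iota(v).b$. Using that $\hat\iota$ is a group homomorphism, $\hat\iota(v\circ w)=\hat\iota(v)\hat\iota(w)$, so the second condition is $\hat\iota(v)\hat\iota(w).a=\hat\iota(v).b$, which, since $\hat\iota(v)$ acts on $\MS'$ as an invertible map, is equivalent to $\hat\iota(w).a=b$. Hence $(f,a)\sim(g\circ v^{-1},\hat\iota(v).b)$ iff $a\in\tilde\iota(Z)$ and $\hat\iota(w).a=b$, which is exactly $(f,a)\sim(g,b)$. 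Both directions are obtained simultaneously, so no separate converse argument is needed.

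The only genuinely careful point — and the one I would expect to be the main (minor) obstacle — is the claim that the convex set $Z$ and hence the domain of comparison is literally unchanged when passing from $g$ to $g\circ v^{-1}$, together with the compatibility $\tilde\iota(v\circ w)\cdot\tilde\iota(Z)$ vs $\hat\iota(v).\tilde\iota(Z)$ type identities; these rest on the properties of $\tilde\iota$ and $\hat\iota$ established just before the lemma (namely $\tilde\iota(w.K)=\hat\iota(w).\tilde\iota(K)$ and the well-definedness of the relation up to the pointwise stabilizer of $Z$). Once one is comfortable that changing $g$ by an element of $\aW$ on the right only changes the recorded transition element on the left by that same element, the rest is formal. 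I would therefore keep the proof to two short paragraphs: one identifying the transition element of the modified pair, and one doing the homomorphism cancellation above.

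\begin{proof}
Write $Z:=f^{-1}(g(\MS))$. Since $v\in\aW$ acts as a bijection of $\MS$, we have $(g\circ v^{-1})(\MS)=g(\MS)$, so the convex subset of $\MS$ associated with the pair $(f,g\circ v^{-1})$ in the definition of $\sim$ is again $Z$. If $w\in\aW$ satisfies $f|_Z=g\circ w|_Z$, then $f|_Z=(g\circ v^{-1})\circ(v\circ w)|_Z$, so $v\circ w$ is an admissible transition element for the pair $(f,g\circ v^{-1})$; as in the definition, this is independent of the chosen representative.

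Now $(f,a)\sim(g,b)$ means $a\in\tilde\iota(Z)$ and $\hat\iota(w).a=b$. On the other hand $(f,a)\sim(g\circ v^{-1},\hat\iota(v).b)$ means $a\in\tilde\iota(Z)$ and $\hat\iota(v\circ w).a=\hat\iota(v).b$. Since $\hat\iota$ is a group homomorphism, $\hat\iota(v\circ w).a=\hat\iota(v).\big(\hat\iota(w).a\big)$, and since $\hat\iota(v)$ acts on $\MS'$ as an invertible map, $\hat\iota(v).\big(\hat\iota(w).a\big)=\hat\iota(v).b$ is equivalent to $\hat\iota(w).a=b$. Hence both relations hold under exactly the same conditions, which proves the equivalence.
\end{proof}
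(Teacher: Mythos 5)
Your proposal is correct and follows essentially the same route as the paper: identify that the comparison set $Z$ is unchanged when $g$ is replaced by $g\circ v^{-1}$, observe that the transition element becomes $v\circ w$, and compare the two membership-plus-equation conditions. The only difference is that you spell out the final step (the cancellation via the homomorphism property and invertibility of $\hat\iota(v)$), which the paper leaves as ``one easily sees''.
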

\proof 
First of all notice that $Z=f^{-1}(g(\MS)) = f^{-1}((g\circ v^{-1})(\MS))$, and that if $w$ is an element of $W$ such that $f\vert_Z = g \circ w \vert_Z$, that then also $f\vert_Z = (g \circ v^{-1})\circ (v \circ w) \vert_Z$.

The condition for $(f,a)  \sim (g,b)$ to hold is that $a\in \tilde{\iota}(Z)$ and $\hat{\iota}(w).a=b$ with $w$ as above. On the other hand the condition for $(f,a) \sim (g \circ v^{-1},\hat{\iota}(v). b)$ is $a\in \tilde{\iota}(Z)$ and $\hat{\iota}(v \circ w).a=\hat{\iota}(v). b$. One easily sees that these conditions are the same.
\qed

%Let $(f,a)$ and $(g,b)$ be two pairs of $\widetilde{X}$ and $w\in W$ such that $f \vert_Z = g \circ w \vert_Z$ with $Z=f^{-1}(g(\MS))$, then $(f,a)  \sim (g,\iota(w)a) \Leftrightarrow (f,a) \sim (g \circ w,a)$.

\begin{lemma}\label{lemma:ltrans2}
Let $(f,a)$ and $(g,b)$ be two pairs of $\widetilde{X}$ and $w\in W$ such that $f \vert_Z = g \circ w \vert_Z$ with $Z=f^{-1}(g(\MS))$, then $(f,a)  \sim (g,b) \Leftrightarrow (f,a) \sim (g \circ w, a)$ and $\hat{\iota}(w). a =b$.
\end{lemma}
\proof
Directly from the previous lemma.
\qed

\begin{lemma}\label{lemma:inj}
The pairs $(f,a)$ and $ (f,b)$ are equivalent if and only if $a =b$.
\end{lemma}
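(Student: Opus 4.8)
The plan is to unwind the definition of $\sim$ in the special case $g=f$ and to observe that the injectivity of charts collapses everything to the trivial group element.

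First I would record that, since $f$ is injective, the convex set $Z\define f^{-1}(f(\MS))$ occurring in the definition of $\sim$ is all of $\MS$, so $\tilde{\iota}(Z)=\tilde{\iota}(\MS)=\MS'$ (the whole model space, regarded as the empty intersection of half-apartments, and consistent with the characterisation $a\in K\Leftrightarrow\iota(a)\in\tilde{\iota}(K)$). Hence the condition ``$a\in\tilde{\iota}(Z)$'' in ``$(f,a)\sim(f,b)$'' is automatically satisfied, and all of the content sits in the second condition $\hat{\iota}(w).a=b$.

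Next I would pin down which $w\in\aW$ are admissible: by definition we need $f\vert_Z=f\circ w\vert_Z$, i.e. $f=f\circ w$ as maps $\MS\to X$. Since $f$ is injective this forces $w$ to act as the identity on $\MS$; writing $w=\overline{w}t$ with $\overline{w}\in\sW$ and $t\in T$ and evaluating at the origin $o$ shows the translation part $t$ is trivial, and then $\overline{w}=\id$ because $\sW$ acts faithfully on $\MS$. Therefore $w=\id$, and so $\hat{\iota}(w)=\id$ in $\aW'$ as well. Combining the two observations: if $(f,a)\sim(f,b)$ then $b=\hat{\iota}(w).a=\id.a=a$; conversely, if $a=b$ then taking $w=\id$ we have $a\in\MS'=\tilde{\iota}(\MS)=\tilde{\iota}(Z)$ and $\hat{\iota}(\id).a=a=b$, so $(f,a)\sim(f,b)$.

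There is no genuine obstacle in this statement; the only points requiring a little care are the faithfulness argument that reduces the admissible $w$ to $\id$, and the bookkeeping that $\tilde{\iota}$ carries the full model space to the full model space. I would emphasise in the write-up why this lemma matters: it is exactly what ensures that, after forming $X'\define\widetilde{X}/\!\sim$, the map $\MS'\to X'$ induced by a single chart $f$ (namely $a\mapsto[(f,a)]$) is injective, so that the charts of the resulting atlas $\cA'$ are injective as demanded by Definition~\ref{Def_LambdaBuilding}.
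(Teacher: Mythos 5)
Your proof is correct and follows essentially the same route as the paper, which simply observes that in the definition of $\sim$ one may take $w=\id$ (since $f\vert_Z=f\circ\id\vert_Z$ with $Z=\MS$), so that $(f,a)\sim(f,b)$ reduces to $a=b$. Your extra faithfulness argument showing that \emph{every} admissible $w$ is the identity is fine but not strictly needed, because the paper has already established that the condition is independent of the choice of $w$.
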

\proof
Directly from the definition of the relation, and the fact that for the element in $\aW$ mentioned in this definition one can take the identity. \qed

\begin{lemma}
The relation ``$\sim$'' on $\widetilde{X}$ is an equivalence relation.
\end{lemma}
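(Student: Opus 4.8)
The plan is to verify the three defining properties of an equivalence relation for $\sim$ on $\widetilde{X}$: reflexivity, symmetry, and transitivity. Reflexivity is immediate from Lemma~\ref{lemma:inj} (taking $a=b$), so the real content lies in symmetry and transitivity. Throughout I will repeatedly use the basic consistency facts already established: Condition (A2) for $(X,\cA)$, the functorial behaviour $\tilde\iota(w.K) = \hat\iota(w).\tilde\iota(K)$ of the extended map $\tilde\iota$, and Lemma~\ref{lemma:ltrans} together with its Corollary~\ref{lemma:ltrans2}, which let me absorb affine Weyl group elements into the chart in the second coordinate.

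For symmetry, suppose $(f,a)\sim(g,b)$, so with $Z = f^{-1}(g(\MS))$ there is $w\in\aW$ with $f|_Z = (g\circ w)|_Z$, and $a\in\tilde\iota(Z)$ with $\hat\iota(w).a = b$. I must show $(g,b)\sim(f,a)$. Set $Z^* = g^{-1}(f(\MS))$; by (A2) applied the other way, $Z^* = w(Z)$ and $g|_{Z^*} = (f\circ w^{-1})|_{Z^*}$. Now $b = \hat\iota(w).a \in \hat\iota(w).\tilde\iota(Z) = \tilde\iota(w(Z)) = \tilde\iota(Z^*)$, using the functoriality of $\tilde\iota$, and $\hat\iota(w^{-1}).b = \hat\iota(w^{-1})\hat\iota(w).a = a$ since $\hat\iota$ is a group homomorphism. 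This is exactly the condition for $(g,b)\sim(f,a)$.

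For transitivity, suppose $(f,a)\sim(g,b)$ and $(g,b)\sim(h,c)$. Using Corollary~\ref{lemma:ltrans2} I can replace $g$ by $g\circ w$ (where $f|_Z = (g\circ w)|_Z$ on $Z = f^{-1}(g(\MS))$), so without loss of generality the first relation becomes: $f$ and $g$ agree on $Z = f^{-1}(g(\MS))$ and $a = b$ lies in $\tilde\iota(Z)$. Similarly, after a further adjustment via Lemma~\ref{lemma:ltrans}, the second relation becomes: $g$ and $h$ agree on $Z' = g^{-1}(h(\MS))$ and $b = c$ lies in $\tilde\iota(Z')$. So now $f$, $g$, $h$ pairwise agree on the relevant overlap and $a = b = c \in \tilde\iota(Z)\cap\tilde\iota(Z') = \tilde\iota(Z\cap Z')$. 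To conclude $(f,a)\sim(h,c)$ I need to show $f$ and $h$ agree on $Z\cap Z'$ (which, crucially, is contained in $f^{-1}(h(\MS))$ because the image point lifts consistently), that $a\in\tilde\iota(f^{-1}(h(\MS)))$, and that the Weyl element witnessing $f|_{f^{-1}(h(\MS))} = (h\circ u)|\cdots$ fixes $a$; since $f = g = h$ on $Z\cap Z'$, one may take $u = \id$ there and the point $a$ is trivially fixed. The one genuine subtlety — and the step I expect to be the main obstacle — is making precise that $Z\cap Z'$ really is (contained in, and cofinal enough inside) $f^{-1}(h(\MS))$, i.e.\ that the transition on the triple overlap is governed by a single $u\in\aW$ consistent with the pairwise ones; this is where one must invoke (A2) carefully, possibly shrinking to the convex set $Z\cap Z'$ and noting that the transition map is determined on any nonempty open subset, so the identity works once all three charts coincide there. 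Once that bookkeeping is done, the equivalence $a\in\tilde\iota(Z\cap Z') \Leftrightarrow \iota(a)$ lies in the image handles the containment condition automatically.
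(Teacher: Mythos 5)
Your reflexivity and symmetry arguments are correct and essentially the paper's (symmetry is proved there exactly as you do, via $\tilde\iota(w.Z)=\hat\iota(w).\tilde\iota(Z)$), and your reduction of transitivity via Lemma~\ref{lemma:ltrans} and Corollary~\ref{lemma:ltrans2} to the situation $f|_Z=g|_Z$, $g|_{Z'}=h|_{Z'}$, $a=b=c$ is legitimate (the paper pivots both relations around $f$ instead, but that difference is immaterial). The gap is in the final step. The definition of $(f,a)\sim(h,a)$ requires, for $Z'':=f^{-1}(h(\MS))$, that $a\in\tilde\iota(Z'')$ \emph{and} that $\hat\iota(u).a=a$, where $u\in\aW$ is the element furnished by (A2) with $f|_{Z''}=(h\circ u)|_{Z''}$. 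You cannot ``take $u=\id$'': $u$ is determined (up to its action) on all of $Z''$, which may be strictly larger than $K:=Z\cap Z'$, and $f$ need not agree with $h$ outside $K$; nor can you shrink the defining set to $K$. Your fallback, that ``the transition map is determined on any nonempty open subset,'' is also not available: $K$ is merely convex, possibly a single point or contained in a wall, and then many nontrivial elements of $\aW$ restrict to the identity on $K$. What $f=g=h$ on $K$ actually gives, by injectivity of $h$, is only $u|_K=\id_K$; and from this the needed equality $\hat\iota(u).a=a$ is not ``trivial,'' because $a$ is a point of $\MS'$ lying in $\tilde\iota(K)$ but in general not in $\iota(\MS)$. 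One must argue that an element of $\aW$ fixing the convex set $K$ pointwise has the property that $\hat\iota(u)$ fixes all of $\tilde\iota(K)$ pointwise --- this is precisely the principle the paper invokes at this spot (and already in the well-definedness remark following the definition of $\sim$), and it is the real content of transitivity. So you have misidentified the obstacle: the containment $K\subseteq Z''$ is immediate from $f=h$ on $K$, and ``cofinality'' plays no role; the missing piece is the compatibility of $\hat\iota$ with pointwise fixing on the new points of $\tilde\iota(K)$.

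A smaller point: you use $\tilde\iota(Z)\cap\tilde\iota(Z')=\tilde\iota(Z\cap Z')$ (only the inclusion ``$\subseteq$'' is needed) without justification; the paper makes the analogous assertion tacitly as well, so this is not the main concern, but it deserves at least a remark that $\tilde\iota$ is compatible with intersections of convex sets.
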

\proof
Reflexivity is clear from the previous lemma. We now proof symmetry. Given $(f,a)\sim(g,b)$ there exists a $w\in \aW$ such that $f\vert_Z = g\circ w\vert_Z$ where $Z\define f^{-1}(g(\MS))$. Further, $a$ is contained in the set $\tilde{\iota}(Z)$ and $\hat{\iota}(w). a = b$, so $b=\hat{\iota}(w)^{-1} . a$. It remains to prove that $b$ is contained in the set $\tilde{\iota}(Y)$ with $Y\define g^{-1}(f(\MS))$. 
By assumption $w.Z=Y$ and so $\hat{\iota}(w).\tilde\iota(Z)= \tilde{\iota}(Y)$. This implies that $b=\iota(w) . a$ is contained in $\tilde{\iota}(Y)$ and that the the relation is symmetric.

The last property to check is transitivity. Note that Lemma~\ref{lemma:ltrans} already shows a weak version of transitivity. Using this weak version and Lemma~\ref{lemma:ltrans2} we can assume that we have three pairs $(f,a)$, $(g,a)$ and $(h,a)$, with $(f,a) \sim (g,a)$ and $(f,a) \sim (h,a)$ such that $f\vert_Z =g\vert_Z$ and $f\vert_{Z'}=h\vert_{Z'}$  with $Z:= f^{-1}(g(\MS))$ and $Z':= f^{-1}(h(\MS))$. Consider the convex set $K := Z \cap Z'$, by taking restrictions to both $Z$ and $Z'$ it follows that $g\vert_K =h\vert_K$. In particular one observes that $K \subset Z'':= g^{-1}(h(\MS))$, and that $a\in \tilde{\iota}(K)\subset \tilde{\iota}(Z'')$. By Condition (A2) we know that there exists a $w\in W$ such that $g\vert_{Z''}=h\circ w \vert_{Z''}$, and so also $g\vert_{K}=h\circ w\vert_{K}$. Because $h$ and $g$ are injections it follows that $w$ leaves $K$ invariant. This also implies that $\hat{\iota}(w)$ leaves $\tilde{\iota}(K)$ invariant and that $\hat{\iota}(w).a =a$. We can conclude that $(g,a) \sim (h,a)$ and that~``$\sim$'' is a transitive relation. 
\qed

\begin{definition}\label{def:extension}
Let $X'= (\cA\times \MS')\diagup_{\sim}$ and define for each chart $f \in \cA$ a map $\psi(f)$ from $\MS'$ to  $X'$ as follows:
$$
\psi(f) = [a\longmapsto (f,a)\diagup_{\sim}].
$$
We write $\cA'$ for the set of maps $\psi(f)\circ w'$, with $f\in \cA$ and where $w'$ ranges over all $w'\in\aW'$.\end{definition}

We will prove that $(X',\cA')$ satisfies the assertion of the third Main Result. Note that Condition (A1) follows already from the definition and part 2 of the following lemma.

\begin{lemma}\label{lemma:prepphi}
The elements of $\cA'$ satisfy:
\begin{enumerate}
 \item each map $f'\in\cA'$ is injective,
 \item if $w\in\aW$ and $f \in \cA$, then $\psi(f\circ w) = \psi(f) \circ \hat\iota(w)$. 
\end{enumerate}
\end{lemma}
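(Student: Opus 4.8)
The plan is to prove the two assertions of Lemma~\ref{lemma:prepphi} in order, using the relation analysis already established in Lemmas~\ref{lemma:ltrans}--\ref{lemma:inj}. For the first item, injectivity of a map $f'\in\cA'$: writing $f' = \psi(f)\circ w'$ with $f\in\cA$ and $w'\in\aW'$, it suffices to show $\psi(f)$ is injective, since $w'$ is a bijection of $\MS'$. But $\psi(f)(a) = \psi(f)(b)$ means exactly $(f,a)\sim(f,b)$, and Lemma~\ref{lemma:inj} tells us this forces $a=b$. So item (1) is essentially immediate from Lemma~\ref{lemma:inj}.

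For the second item I want to show $\psi(f\circ w) = \psi(f)\circ\hat\iota(w)$ for $w\in\aW$, i.e. that for every $a\in\MS'$ the classes $(f\circ w, a)\diagup_\sim$ and $(f, \hat\iota(w).a)\diagup_\sim$ coincide. This is precisely the assertion $(f\circ w, a)\sim(f,\hat\iota(w).a)$. Here I would unwind the definition of $\sim$: with $Z = (f\circ w)^{-1}(f(\MS)) = \MS$ (since $f\circ w$ and $f$ have the same image), the transition element carrying $f\circ w$ to $f$ is $w$ itself (as $f\circ w = f\circ w$ on all of $\MS$), and $\tilde\iota(\MS)=\MS'$, so $a\in\tilde\iota(Z)$ holds trivially; the remaining condition is $\hat\iota(w).a = \hat\iota(w).a$, which is a tautology. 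Hence the equivalence holds for all $a$ and item (2) follows.

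Given (1) and (2), Condition (A1) for $(X',\cA')$ is then immediate: any element of $\cA'$ has the form $\psi(f)\circ w'$ with $w'\in\aW'$, and composing further with $v'\in\aW'$ gives $\psi(f)\circ (w'v')$, which is again of the required form; and the definition of $\cA'$ is already closed under this operation by construction, so nothing further is needed — the parenthetical remark before the lemma simply records that the ``$f\in\cA$, $w\in\aW$'' part of closure is handled by item (2), turning $\psi(f\circ w)$ into $\psi(f)\circ\hat\iota(w)$ which lies in $\cA'$.

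I do not expect a serious obstacle here; the lemma is a bookkeeping step whose whole content is repackaging the equivalence relation. The only point requiring a little care is making sure the transition map between $f\circ w$ and $f$ is correctly identified as $w$ and that $\tilde\iota$ of the full model space is the full model space, so that the ``$a\in\tilde\iota(Z)$'' clause imposes no constraint; both are clear from the construction of $\tilde\iota$ and $\hat\iota$ given just before Definition~\ref{def:extension}. If anything, one should double-check the direction of the action convention (whether the transition element sends $f\circ w$ to $f$ or $f$ to $f\circ w$) so that $\hat\iota(w)$ rather than $\hat\iota(w)^{-1}$ appears, matching the statement; this is resolved by comparing with the symmetry argument in the proof that $\sim$ is an equivalence relation.
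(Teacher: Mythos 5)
Your proposal is correct and follows essentially the same route as the paper: item (1) from Lemma~\ref{lemma:inj}, and item (2) by reducing to the equivalence $(f\circ w,a)\sim(f,\hat\iota(w).a)$. The only cosmetic difference is that the paper dispatches this equivalence by citing Lemma~\ref{lemma:ltrans}, whereas you unwind the definition of $\sim$ directly (with $Z=\MS$, transition element $w$, $\tilde\iota(\MS)=\MS'$), which is the same computation.
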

\proof
The first property is a consequence of Lemma~\ref{lemma:inj}. To prove the second observe that
$$
\psi(f\circ w) = [a\mapsto (f\circ w, a)\diagup_{\sim}]
$$ 
and that
$$
\psi(f)\circ \hat\iota(w) = [a\mapsto (f,\hat\iota(w).a)\diagup_{\sim}].
$$
It therefore remains to prove that $(f,\hat\iota(w).a)$ and $(f\circ w, a)$ are equivalent, which follows from Lemma~\ref{lemma:ltrans}.
\qed

Let $x$ be a point of $X$ lying in some apartment $\Sigma$. Let $f \in \cA$ be a chart defining this apartment, and $a \in \MS$ such that $f(a) = x$. We now define $\phi(x)$ to be the equivalence class of the pair $(f,\iota(a))$.

\begin{lemma}
The map $\phi$ is well-defined.
\end{lemma}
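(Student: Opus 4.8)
The plan is to verify that the equivalence class of $(f,\iota(a))$ does not depend on the choice of the chart $f\in\cA$ defining an apartment through $x$ nor on the preimage $a$ of $x$ under $f$. First I would dispose of the dependence on $a$: if $f(a)=f(a')=x$, then injectivity of the chart $f$ forces $a=a'$, so no choice is involved there once $f$ is fixed. The real content is independence of $f$.

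So suppose $f,g\in\cA$ are two charts with $f(a)=g(b)=x$; I must show $(f,\iota(a))\sim(g,\iota(b))$. Since $x$ lies in the intersection $f(\MS)\cap g(\MS)$, this intersection is non-empty, so by Condition (A2) the set $Z\define f^{-1}(g(\MS))$ is a convex subset of $\MS$ containing $a$, and there exists $w\in\aW$ with $f\vert_Z=(g\circ w)\vert_Z$. Evaluating at $a\in Z$ gives $x=f(a)=(g\circ w)(a)=g(w.a)$, and since $g$ is injective and $g(b)=x$ we get $b=w.a$. It remains to check the two defining conditions for $(g,\iota(a))\sim(f,\iota(b))$ — or, after using symmetry of $\sim$ (already established) together with Corollary~\ref{lemma:ltrans2}, the conditions $\iota(a)\in\tilde\iota(Z)$ and $\hat\iota(w).\iota(a)=\iota(b)$. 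The first holds because $a\in Z$ and $\tilde\iota$ was constructed precisely so that $a\in Z\Leftrightarrow\iota(a)\in\tilde\iota(Z)$. For the second, recall $b=w.a$, so $\iota(b)=\iota(w.a)$; and the compatibility of $\iota$, $\hat\iota$ and $\tilde\iota$ with the $\aW$-action (the identity $\tilde\iota(w.K)=\hat\iota(w).\tilde\iota(K)$, applied pointwise, equivalently the fact that $\hat\iota(w)$ restricted to $\iota(\MS)$ is just $\iota\circ w\circ\iota^{-1}$) yields $\hat\iota(w).\iota(a)=\iota(w.a)=\iota(b)$, as required. Hence $(f,\iota(a))\sim(g,\iota(b))$ and $\phi(x)$ is well defined.

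The only mild subtlety — and the step I expect to need the most care — is making the passage ``$\iota(w.a)=\hat\iota(w).\iota(a)$'' fully rigorous: the relation $\tilde\iota(w.K)=\hat\iota(w).\tilde\iota(K)$ is stated for convex \emph{sets}, so I would either apply it to singletons (noting a point is a degenerate convex set, or intersecting enough half-apartments through $a$) or, more cleanly, unwind the definitions of $\hat\iota$ and $\iota$ directly: writing $w=\overline w\,t$ with $\overline w\in\sW$ and $t\in T$, one has $\hat\iota(w)=\overline w\,\iota(t)$, and since $\iota$ is an $F$-linear map commuting with the linear action of $\sW$ and intertwining translation by $t$ with translation by $\iota(t)$, one gets $\hat\iota(w)\circ\iota=\iota\circ w$ on all of $\MS$. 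Everything else is a direct substitution into the definition of $\sim$, so no further obstacle is anticipated.
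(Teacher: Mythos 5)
Your argument is essentially the paper's: both take two charts $f,g$ with $f(a)=g(b)=x$, use Condition (A2) to get $w\in\aW$ with $f\vert_Z=(g\circ w)\vert_Z$ on $Z=f^{-1}(g(\MS))$, deduce $w.a=b$ from injectivity, and conclude $(f,\iota(a))\sim(g,\iota(b))$ directly from the definition of $\sim$. The only difference is that you spell out the compatibility $\hat\iota(w).\iota(a)=\iota(w.a)$, which the paper leaves implicit in the construction of $\iota$, $\hat\iota$ and $\tilde\iota$; that extra care is correct and harmless.
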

\proof
Let $x$ be a point of $X$, and $f$, $g$ two charts in $\cA$ such that for the two points $a$ and $b$ in $\MS$ it holds that $f(a)=g(b)=x$. Observe that $a \in Z:= f^{-1}(g(\MS))$ and that if $f\vert_Z =g \circ w \vert_Z$ for some $w \in W$, then $w.a =b$. This implies that $(f,\iota(a)) \sim (g,\iota(b))$ and that the map is well-defined.
\qed

\begin{lemma}
The map $\phi$ is an injection.
\end{lemma}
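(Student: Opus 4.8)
The plan is to show that $\phi:X\to X'$ is injective, i.e.\ that $\phi(x)=\phi(y)$ forces $x=y$. First I would fix points $x,y\in X$ with $\phi(x)=\phi(y)$. By definition of $\phi$, choose a chart $f\in\cA$ together with $a\in\MS$ such that $f(a)=x$, and a chart $g\in\cA$ together with $b\in\MS$ such that $g(b)=y$; then $\phi(x)=(f,\iota(a))/_\sim$ and $\phi(y)=(g,\iota(b))/_\sim$, so the hypothesis says $(f,\iota(a))\sim(g,\iota(b))$.

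Next I would unwind the definition of ``$\sim$'': there exists $w\in\aW$ with $f\vert_Z=g\circ w\vert_Z$, where $Z\define f^{-1}(g(\MS))$ is convex by (A2), and moreover $\iota(a)\in\tilde\iota(Z)$ together with $\hat\iota(w).\iota(a)=\iota(b)$. The key point is that the maps $\iota,\hat\iota,\tilde\iota$ are all \emph{injective and compatible}: from $\iota(a)\in\tilde\iota(Z)$ and the defining property of $\tilde\iota$ (namely $a\in K\Leftrightarrow\iota(a)\in\tilde\iota(K)$ for convex $K$) we get $a\in Z$; from $\hat\iota(w).\iota(a)=\iota(b)$ and the compatibility $\hat\iota(w).\iota(a)=\iota(w.a)$ together with injectivity of $\iota$ we get $w.a=b$.

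Finally I would translate back to $X$. Since $a\in Z=f^{-1}(g(\MS))$, the point $f(a)=x$ lies in $g(\MS)$, and on $Z$ we have $f=g\circ w$, so
$$
x=f(a)=(g\circ w)(a)=g(w.a)=g(b)=y.
$$
This completes the argument.

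I do not expect a serious obstacle here: the statement is essentially a bookkeeping consequence of the injectivity of $e$ propagated through $\iota$, $\hat\iota$, $\tilde\iota$, and of Condition (A2). The only mild subtlety to be careful about is making sure the equivalence $(f,\iota(a))\sim(g,\iota(b))$ is used in its ``straightened'' form (so that the relevant $w\in\aW$ genuinely satisfies $f\vert_Z=g\circ w\vert_Z$ with $Z=f^{-1}(g(\MS))$), which is exactly what the definition of $\sim$ provides; invoking Lemma~\ref{lemma:ltrans} or Corollary~\ref{lemma:ltrans2} is available if one wants to normalize the representatives first, but it is not strictly needed.
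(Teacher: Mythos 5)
Your argument is correct, and it takes a slightly different route than the paper. The paper's proof is shorter: it first uses Condition (A3) to place $x$ and $y$ in a \emph{common} apartment, writing $x=f(a)$, $y=f(b)$ for a single chart $f\in\cA$, so that $\phi(x)=\phi(y)$ becomes $(f,\iota(a))\sim(f,\iota(b))$, which by Lemma~\ref{lemma:inj} forces $\iota(a)=\iota(b)$ and hence $a=b$, $x=y$. You instead allow two arbitrary charts $f,g$ and unwind the definition of $\sim$ directly, which is fine and even a bit more informative (it shows that equivalence of representatives with different charts already forces the underlying points of $X$ to agree, essentially the well-definedness argument run in reverse); the price is that you need the point-level compatibility $\hat\iota(w).\iota(a)=\iota(w.a)$, which the paper only records at the level of convex sets, namely $\tilde\iota(w.K)=\hat\iota(w).\tilde\iota(K)$. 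That identity is true and easy — either note that $\iota$ is additive and commutes with the $\sW$-action because $e$ is compatible with the $F$-structure, or apply the stated identity to the convex singleton $K=\{a\}$ together with $\tilde\iota(\{a\})=\{\iota(a)\}$ — but you should say a word about it rather than cite it as if it were on record. Also note that your hypothesis $\iota(a)\in\tilde\iota(Z)$ automatically rules out the degenerate case $f(\MS)\cap g(\MS)=\emptyset$, so the element $w$ from (A2) really is available. With those two small remarks supplied, your proof is complete; the paper's version buys brevity by reusing (A3) and Lemma~\ref{lemma:inj}, yours buys independence from (A3).
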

\proof
Let $x$ and $y$ be two points of $X$. By Condition (A3) there exists a chart $f\in \cA$ and two points $a,b$ of the model space $\MS$ such that $x=f(a)$ and $y=f(b)$. One has that $\phi(x)=\phi(y)$ if and only if $(f,a) \sim (f,b)$. Injectivity of $\phi$ now follows from Lemma~\ref{lemma:inj}.
\qed

We now check Condition (A2). We may assume without loss of generality that the two charts in $\cA'$ are of the form $\psi(f)$ and $\psi(g)$ with $f$ and $g$ two charts in $\cA$. Let $Z$ be the set $f^{-1}(g(\MS))$ and $Z'$ the set $\psi(f)^{-1}(\psi(g)(\MS'))$. A point $a$ lies in $Z'$ if there exists a point $b \in \MS'$ such that $(f,a) \sim (g,b)$. The necessary and sufficient condition for this equivalence to happen is that $a$ is an element of $\tilde{\iota}(Z)$. So we can conclude that $Z' =\tilde{\iota}(Z)$ and that $Z'$ is convex. The second part of Condition (A2) follows easily from Lemma~\ref{lemma:ltrans2}.

Using part two of Lemma~\ref{lemma:prepphi} one sees that an apartment of $(X,\cA)$ defines in a bijective way an apartment (i.e. an image of a chart in $\cA'$) of $(X',\cA')$. We can think of the map $\phi$ as embedding $X$ into a larger set $X'$ by adding points to each apartment. The above discussion for Condition (A2) implies that the intersection of two apartments before and after the embedding stays convex and also of the ``same shape'' (there is a map between both induced by $\tilde{\iota}$). So if two apartments share a Weyl chamber, then they also do after ``adding the extra points''.

We now prove (A6), as it is needed to prove certain results involving germs, which in turn are needed to prove (A3).

\begin{lemma}
Let $A'$, $B'$ and $C'$ be three apartments of $(X',\cA')$ intersecting pairwise in half-apartments. Then the intersection of all three is non empty.
\end{lemma}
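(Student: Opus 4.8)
The plan is to transport the hypothesis down to the building $(X,\cA)$, apply Condition (A6) there, and lift the resulting point back to $X'$ by means of the embedding $\phi$. First I would put the three apartments into standard form: by Definition~\ref{def:extension} together with Condition (A1), every apartment of $(X',\cA')$ has the shape $\psi(f)(\MS')$ for some chart $f\in\cA$, so I write $A'=\psi(f)(\MS')$, $B'=\psi(g)(\MS')$, $C'=\psi(h)(\MS')$ and let $A=f(\MS)$, $B=g(\MS)$, $C=h(\MS)$ be the corresponding apartments of $(X,\cA)$ (we may assume $A',B',C'$ pairwise distinct, the remaining case being trivial).

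Next I would invoke the computation already carried out while checking Condition (A2): identifying $A'$ with $\MS'$ via $\psi(f)$ and $A$ with $\MS$ via $f$, one has $\psi(f)^{-1}(B')=\tilde{\iota}(Z)$ with $Z=f^{-1}(g(\MS))=f^{-1}(A\cap B)$, and similarly for the pairs $(A,C)$ and $(B,C)$. In particular, since $\tilde{\iota}(\emptyset)=\emptyset$ and each pairwise intersection $A'\cap B'$, $A'\cap C'$, $B'\cap C'$ is a (nonempty) half-apartment, the intersections $A\cap B$, $A\cap C$, $B\cap C$ are already nonempty.

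The key step, and the one I expect to be the main obstacle, is to upgrade ``nonempty'' to ``half-apartment'': I claim that if $A'\cap B'$ is a half-apartment of $\MS'$ then $A\cap B$ is a half-apartment of $\MS$. By Condition (A2) for $(X,\cA)$ the set $A\cap B$ is convex, so I write it irredundantly as $H_1\cap\dots\cap H_k$ with each $H_i$ a half-apartment of $\MS$. Using the properties of $\tilde{\iota}$ recorded just before Definition~\ref{def:extension} (it carries half-apartments to half-apartments, is built compatibly with such intersections, and satisfies $a\in K\Leftrightarrow\iota(a)\in\tilde{\iota}(K)$ for convex $K$), I get $\psi(f)^{-1}(A'\cap B')=\tilde{\iota}(A\cap B)=\tilde{\iota}(H_1)\cap\dots\cap\tilde{\iota}(H_k)$, and this representation is again irredundant: if $\tilde{\iota}(H_j)\supseteq\bigcap_{i\neq j}\tilde{\iota}(H_i)$ then choosing a witness $x\in\bigcap_{i\neq j}H_i\setminus H_j$ gives $\iota(x)\in\bigcap_{i\neq j}\tilde{\iota}(H_i)\subseteq\tilde{\iota}(H_j)$, forcing $x\in H_j$, a contradiction. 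Since a half-apartment of a model space is not an irredundant intersection of two or more half-apartments (a straightforward consequence of the description of half-apartments of $\MS'$ from Section~\ref{section:defs}, cf.\ also \cite[Prop.~3.137]{AB}), it follows that $k=1$, whence $A\cap B=H_1$ is a half-apartment. The same reasoning applies verbatim to $A\cap C$ and $B\cap C$.

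Finally I would conclude: $A$, $B$, $C$ are apartments of $(X,\cA)$ meeting pairwise in half-apartments, so Condition (A6) for $(X,\cA)$ yields a point $p\in A\cap B\cap C$. Writing $p=f(a)$ with $a\in\MS$, the definitions of $\phi$ and of $\psi(f)$ give $\phi(p)=(f,\iota(a))\diagup_{\sim}=\psi(f)(\iota(a))\in A'$, and symmetrically (using $p\in B$ and $p\in C$) $\phi(p)\in B'\cap C'$. Hence $\phi(p)\in A'\cap B'\cap C'$, so the triple intersection is nonempty. Apart from the half-apartment upgrade in the third paragraph, every step is bookkeeping with the bijective correspondence between apartments of $X$ and of $X'$ and the already established compatibility of $\tilde{\iota}$ with convex sets.
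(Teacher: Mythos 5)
Your proof is correct and takes essentially the same route as the paper's: pass to the corresponding apartments $A$, $B$, $C$ of $(X,\cA)$, observe that they still intersect pairwise in half-apartments, apply Condition (A6) there, and push the common point forward with $\phi$ (equivalently $\psi(f)\circ\iota$) into $A'\cap B'\cap C'$. The only difference is that you make explicit, via the irredundant-intersection argument for $\tilde{\iota}$, the descent step the paper dismisses with ``by the construction it follows''; this is a harmless (and welcome) elaboration rather than a different method.
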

\proof
Let $A$, $B$ and $C$ be the three apartments of $(X,\cA)$ which define respectively $A'$, $B'$ and $C'$. By the construction it follows that $A$, $B$ and $C$ also intersect pairwise in half-apartments, so they will have some point $x$ in common because of Condition (A6) for $(X,\cA)$. The point $\phi(x)$ lies in both $A'$, $B'$ and $C'$, so the intersection of these three apartments is not empty.
\qed

In order to prove the remaining conditions, we look at Weyl chambers in $(X',\cA')$ and investigate both their structure at infinity and the local structures. 

It is easily seen that we can define Weyl chambers in the apartments of $(X',\cA')$. Let $S$ be some Weyl chamber in some apartment $A'$ of $(X',\cA')$. Let $A$ be the corresponding apartment in $(X,\cA)$. One can consider parallel classes of Weyl chambers in this apartment $A$. Using the correspondence again, there corresponds a parallel class of Weyl chambers in $A'$ to each class in $A$. This way we can associate to the Weyl chamber $S$ in $A'$ a chamber $c$ in $\partial A \subset \partial_{\cA} X$, we say that: ``$S$ has the chamber $c$ at infinity''. From the already proven Condition (A2) it then follows that the associated chamber $c$ in $\partial_{\cA} X$ is independent of the choice of apartment containing $S$. 

\begin{lemma}\label{lemma:A4prep}
If two Weyl chambers $S_1$ and $S_2$ of $(X',\cA')$ have the same chamber at infinity, then they have a sub-Weyl chamber in common.
\end{lemma}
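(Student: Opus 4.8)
The plan is to reduce the statement to the corresponding fact about the original building $(X,\cA)$. Recall from the discussion preceding the lemma that a Weyl chamber $S$ in an apartment $A'$ of $(X',\cA')$ has a well-defined chamber $c=\partial S$ at infinity in $\partial_\cA X$, obtained via the bijective correspondence between apartments of $(X,\cA)$ and apartments of $(X',\cA')$ (part two of Lemma~\ref{lemma:prepphi}) together with the parallelism classes of Weyl chambers inside each apartment. So first I would pass, for $i=1,2$, from $S_i$ in its apartment $A_i'$ to the apartment $A_i$ of $(X,\cA)$ with $\psi$ applied to a chart of $A_i$ giving $A_i'$, and to the Weyl chamber $T_i\subset A_i$ whose image (in the sense of $\tilde\iota$ applied inside the apartment) determines $S_i$. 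By hypothesis $\partial T_1 = c = \partial T_2$ in $\partial_\cA X$.

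Next I would invoke Condition (A4) for $(X,\cA)$: there exist sub-Weyl chambers $T_1'\subset T_1$, $T_2'\subset T_2$ and a chart $h\in\cA$ with $T_1'\cup T_2'\subset h(\MS)$. Since $T_1'$ and $T_2'$ are parallel (both have chamber $c$ at infinity) and now lie in a common apartment $A=h(\MS)$, Proposition~\cite[Section 2.4]{Bennett} (parallel Weyl chambers in an apartment of the full-Weyl-group model space share a common sub-Weyl chamber, as they are translates of one another) yields a Weyl chamber $T''\subset T_1'\cap T_2'\subset A$ parallel to both, hence with $\partial T'' = c$. The key point to make explicit here is that the correspondence $S\mapsto\partial S$ is compatible with passing to sub-Weyl chambers and is independent of the ambient apartment — this was already observed just before the lemma using Condition (A2), which is available.

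Finally I would transport $T''$ back through the base change construction. Applying $\psi(h)\circ(\text{the half-apartment/convex-set extension }\tilde\iota)$ to the apartment $A$, we get an apartment $A'$ of $(X',\cA')$ containing a Weyl chamber $S''$ with $\partial S'' = c$. It remains to argue that $S''$ is a common sub-Weyl chamber of both $S_1$ and $S_2$: since $A$ intersects each $A_i$ in a convex set containing a sub-Weyl chamber of $T_i$ (namely containing $T_i'$, hence $T''$), and since, by the (A2) discussion, intersections of apartments retain their "shape" under the embedding $\phi$, the apartment $A'$ meets $A_i'$ in a convex set containing the corresponding sub-Weyl chamber $S_i'$ of $S_i$; as $\partial S'' = \partial S_i' = c$ and both lie in the apartment $A'\cap A_i'$-region, a further shrinking (again via the parallelism proposition, now applied inside $A_i'$ or inside $A'$) gives a genuine common sub-Weyl chamber.

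\emph{Expected main obstacle.} The delicate point is not any single deep fact but rather bookkeeping: making precise that the passage $S\leftrightarrow(A,T)$ and the extension map $\tilde\iota$ interact correctly with "sub-Weyl chamber" and with intersections of apartments, so that a sub-Weyl chamber found downstairs in $(X,\cA)$ really does produce a \emph{common} sub-Weyl chamber of $S_1$ and $S_2$ upstairs rather than merely two parallel ones. This is where Condition (A2) for $(X',\cA')$ (already proven) and the parallelism proposition must be combined carefully; everything else is a direct translation of (A4) for $(X,\cA)$.
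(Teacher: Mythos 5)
Your argument is correct and follows essentially the same route as the paper: produce a Weyl chamber with chamber $c$ at infinity lying in $A_1\cap A_2$ downstairs (the paper quotes this directly as a consequence of $\partial A_1$ and $\partial A_2$ sharing $c$, while you rederive it from (A4) together with the parallelism proposition), transfer it to a Weyl chamber contained in both $A_1'$ and $A_2'$ of $(X',\cA')$, and then shrink inside apartments to obtain a common sub-Weyl chamber of $S_1$ and $S_2$. The only imprecision is your intermediate claim that $A'\cap A_i'$ already contains a sub-Weyl chamber \emph{of} $S_i$ (the extension of $T_i'$ is a priori only parallel to $S_i$ within $A_i'$), but the final shrinking via the parallelism proposition, which you invoke anyway, closes exactly this point.
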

\proof
Suppose one has two Weyl chambers $S_1$ and $S_2$ (respectively in apartments $A_1'$ and $A_2'$), both having the chamber $c \in \partial_{\cA} X$ at infinity. Let $A_1$ and $A_2$ be the apartments of $(X,\cA)$ corresponding to respectively the apartments $A_1'$ and $A_2'$ of $(X',\cA')$. The apartments at infinity $\partial A_1$ and $\partial A_2$ have the chamber $c$ in common, so $A_1$ and $A_2$ both contain a Weyl chamber $T$ (with some base point $x \in A_1\cap A_2$) having $c$ at infinity. We go back to $(X',\cA')$ and obtain a Weyl chamber $T'$ based at $\phi(x)$ in both $A_1'$ and $A_2'$ having $c$ at infinity. This Weyl chamber $T'$ has with both Weyl chambers $S_1$ and $S_2$ sub-Weyl chambers in common, which on its turn implies that $S_1$ and $S_2$ have a sub-Weyl chamber in common.
\qed

We use the above lemma to show Condition (A4). Let $S_1$ and $S_2$ be two Weyl chambers in $(X',\cA')$, let $c_1$ and $c_2$ be the two corresponding chambers at infinity in $\partial_{\cA} X$. As $\partial_A X$ is a spherical building, there exists an apartment $A$ in $(X,\cA)$ such that $\partial A$ contains both chambers $c_1$ and $c_2$. Let $A'$ be the apartment in $(X',\cA')$ corresponding with $A$. This apartment $A'$ contains Weyl chambers $S_1'$ and $S_2'$ having respectively chambers $c_1$ and $c_2$ at infinity. Lemma~\ref{lemma:A4prep} proves that $A'$ contains sub-Weyl chambers of both $S_1$ and $S_2$, and so Condition (A4) holds.

One can define germs in $X'$ as well, and so also residues $\Delta_{x'} X'$ for points $x' \in X'$. We now list some lemmas, already listed in the introduction, which stay true in this case and are also proved in the same way.

\begin{lemma}\label{lemma:inap2}
Let  $c$ be a chamber of $\partial_\cA X$. For a Weyl chamber $S$ based at a point $x\in X'$ there exists an apartment $A$ with chart in $\cA'$ containing a germ of $S$ at $x$ and such that $c$ is contained in the boundary $\partial A$.
\end{lemma}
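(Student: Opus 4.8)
The plan is to reduce this statement to Proposition~\ref{prop:tec}, which makes the identical claim for the original $\Lambda$--building $(X,\cA)$, via the correspondence between apartments of $(X,\cA)$ and $(X',\cA')$ that was established earlier in this section. First I would note that the chamber $c$ lives in $\partial_\cA X$, which by the construction in Definition~\ref{def:extension} is naturally identified with the chambers at infinity of Weyl chambers of $(X',\cA')$: each apartment $A'$ of $(X',\cA')$ is $\psi(f)(\MS')$ for a unique apartment $A = f(\MS)$ of $(X,\cA)$, and $\partial A' $ is canonically identified with $\partial A \subset \partial_\cA X$ (this is precisely the identification discussed just before Lemma~\ref{lemma:A4prep}).

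Next I would take the given Weyl chamber $S$ based at $x \in X'$. By the discussion preceding Lemma~\ref{lemma:A4prep}, $S$ has a well-defined chamber $c_S$ at infinity lying in $\partial_\cA X$, and $S$ lies in some apartment $A_0' = \psi(f_0)(\MS')$; let $A_0 = f_0(\MS)$ be the corresponding apartment of $(X,\cA)$. If $x \in \phi(X)$, say $x = \phi(x_0)$ with $x_0 \in A_0$, then $S$ is the image (in the sense of adding points to $A_0$) of a Weyl chamber $S_0$ of $(X,\cA)$ based at $x_0$ with the same chamber $c_S$ at infinity, and with chamber $c$ viewed inside $\partial_\cA X$. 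Applying Proposition~\ref{prop:tec} to $(X,\cA)$, the Weyl chamber $S_0$, the point $x_0$, and the chamber $c$ yields an apartment $A$ of $(X,\cA)$ containing a germ of $S_0$ at $x_0$ with $c \subset \partial A$. The corresponding apartment $A' = \psi(f)(\MS')$ of $(X',\cA')$ then contains the germ of $S$ at $x$ (germs being preserved under the ``adding points'' map, as already used in the proof of Lemma~\ref{lemma:A4prep}) and has $c \subset \partial A'$ under the identification of boundaries.

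The remaining issue is the case where $x$ is a ``new'' point of $X'$, i.e.\ not in the image of $\phi$. Here one cannot directly pull $S$ back to $(X,\cA)$. The cleanest fix is to observe that $x$ lies in $A_0' = \psi(f_0)(\MS')$, pick the pre-image point $a' = f_0'^{-1}(x) \in \MS'$ and a translate $w' \in \aW'$ so that, after composing with $w'$, we may as well reason inside the model space $\MS'$; the existence statement is then a purely combinatorial fact about Weyl chambers and chambers at infinity in the model space, which transfers back along any chart. Alternatively, and more in keeping with the paper's style, one invokes Condition (A4) (just proved) together with Lemma~\ref{lemma:A4prep} to move a germ of $S$ into an apartment $A'$ whose boundary contains $c$: take a Weyl chamber in $(X',\cA')$ with $c$ at infinity, use that $\partial_\cA X$ is a spherical building to find an apartment of $(X,\cA)$ whose boundary contains both $c$ and $c_S$, pass to the corresponding apartment $A'$ of $(X',\cA')$, and apply Lemma~\ref{lemma:A4prep} to obtain a common sub-Weyl chamber, hence a common germ. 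I expect the handling of the new points — making precise that germs and chambers at infinity behave well for base points outside $\phi(X)$ — to be the only real obstacle; everything else is a transcription of the proof of Proposition~\ref{prop:tec} (which in turn follows Proposition~1.8 of \cite{Parreau}) through the apartment correspondence.
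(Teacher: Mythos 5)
There is a genuine gap. The paper does not deduce this lemma from Proposition~\ref{prop:tec} applied to $(X,\cA)$; it re-runs the \emph{proof} of that proposition (Parreau's Proposition~1.8) inside $(X',\cA')$, which is possible because the ingredients that proof needs -- (A2), (A4), (A6), residues, germs -- have already been established for $(X',\cA')$ at this point. Your reduction through the apartment correspondence only works when the base point $x$ lies in $\phi(X)$, and even there the step ``germs are preserved under the adding-points map'' is asserted rather than proved (Lemma~\ref{lemma:A4prep} says nothing about germs, only about sub-Weyl chambers and chambers at infinity). The real problem is the case of a new point $x\notin\phi(X)$, which you flag but do not repair.

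Both of your proposed fixes for that case fail. Reasoning ``inside the model space $\MS'$'' cannot work: the chamber $c$ is an arbitrary chamber of $\partial_\cA X$ and need not lie in $\partial A_0'$ for the apartment $A_0'$ containing $S$, so the assertion is not an intra-apartment (combinatorial model-space) fact -- constructing an apartment that simultaneously contains a germ of $S$ at $x$ and has $c$ at infinity is precisely the building-theoretic content of Parreau's argument. The alternative via (A4) and Lemma~\ref{lemma:A4prep} only produces an apartment sharing a \emph{sub-Weyl chamber} with $S$, hence a germ based at some point far from $x$; it does not give the germ of $S$ \emph{at its base point} $x$, which is the whole strength of the lemma. (That strength is exactly what the subsequent proposition proving (A3) uses, when it picks a boundary point $y$ of $K$ and a germ $\Delta_y R$ based at $y$ together with a prescribed chamber $c$ at infinity; a sub-Weyl chamber statement would reduce to (A4) and would not close that argument.) So the missing idea is the actual inductive construction of the apartment through the residue $\Delta_x X'$, i.e., the transcription of Parreau's proof to $(X',\cA')$, not a transfer of the statement from $(X,\cA)$.
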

\proof
See Proposition~\ref{prop:tec}.
\qed

\begin{lemma}\label{lemma:is_spheric}
For all $x\in X'$ the residue $\Delta_xX'$ is a spherical building of type $\RS$.
\end{lemma}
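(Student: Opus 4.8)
The plan is to verify directly that $\Delta_xX'$ satisfies the axioms of a spherical building of type $\RS$, running the same argument that establishes Proposition~\ref{prop:exis_res} for ordinary $\Lambda$--buildings (\cite[5.17]{PetraThesis}); everything this argument needs about $(X',\cA')$ is by now in place, namely Conditions (A1), (A2), (A4) and (A6), the germ and boundary statements Lemmas~\ref{lemma:A4prep} and \ref{lemma:inap2}, and the well-defined assignment to each Weyl chamber of $X'$ of a chamber at infinity in $\partial_\cA X$. The structure is parallel to that of Lemma~\ref{lemma:spheric}, which handled the analogous residue $\Delta_{\phi(x)}X'$ in the epimorphism case.

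Concretely I would argue in three steps. First, the germs of Weyl simplices based at $x$ form a simplicial complex, partially ordered by the face relation read off inside any apartment of $X'$ through $x$ (independent of the apartment by (A2)); its chambers are the germs of Weyl chambers at $x$, and the combinatorial conditions (chamber complex, each apartment a thin chamber complex) are inherited from the model space, where the germs at a point form the Coxeter complex of $\sW$ of type $\RS$. Second, any two chambers of $\Delta_xX'$ lie in a common apartment: it suffices to treat two germs $\mu_1,\mu_2$ of Weyl chambers; picking a Weyl chamber $S_2$ with germ $\mu_2$, letting $c_2\in\partial_\cA X$ be its chamber at infinity, and feeding $c_2$ together with a representative of $\mu_1$ into Lemma~\ref{lemma:inap2}, one obtains an apartment $A'$ of $X'$ through $x$ that contains $\mu_1$ and has $c_2\subseteq\partial A'$; comparing, via Lemma~\ref{lemma:A4prep}, the Weyl chamber of $A'$ pointing towards $c_2$ with $S_2$ then forces $\mu_2$ into $\Delta_xA'$ as well, and faces of germs of Weyl chambers reduce the case of arbitrary simplices to this one. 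Third, if apartments $A'$ and $B'$ of $X'$ both contain $x$ and $\Delta_xA'$, $\Delta_xB'$ share a chamber $C$ and a further simplex $D$, then $C$ and $D$ are represented by Weyl simplices whose germs lie in $A'\cap B'$, and Condition (A2) for $(X',\cA')$ supplies $w'\in\aW'$ with $f\vert_{f^{-1}(g(\MS'))}=(g\circ w')\vert_{f^{-1}(g(\MS'))}$; the associated isomorphism $A'\to B'$ fixes $A'\cap B'$ pointwise, so its restriction to germs at $x$ is an isomorphism $\Delta_xA'\to\Delta_xB'$ fixing $C$ and $D$ pointwise. Since each apartment $\Delta_xA'$ is the Coxeter complex of $\sW$ acting on $\MS'$, the resulting building has type $\RS$; as $\RS$ is spherical this makes $\Delta_xX'$ a spherical building.

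The main obstacle is the second step: showing that any two germs of Weyl chambers at $x$ lie in a common apartment of $X'$. This is the one place where the local residue and the building $\partial_\cA X$ at infinity must really be played off against one another, and — as is typical in the $\Lambda$--valued setting — one cannot argue by naive convexity, since the convex hull of a point and a sub-Weyl chamber need not fill out the ambient Weyl chamber. One must instead extract from Lemmas~\ref{lemma:A4prep} and \ref{lemma:inap2}, together with the surrounding combinatorics of \cite[5.17]{PetraThesis}, that two Weyl chambers based at $x$ with the same chamber at infinity actually share a germ at $x$, and not merely a common sub-Weyl chamber based at some other point.
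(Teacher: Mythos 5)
Your route is the paper's route: the paper proves this lemma simply by the citation ``see Proposition~\ref{prop:exis_res}'', i.e.\ by observing that the proof of \cite[5.17]{PetraThesis} carries over verbatim to $(X',\cA')$ because it only uses what is already established at this point (Conditions (A1), (A2), (A4), (A6), Lemma~\ref{lemma:inap2}, and the well-defined chamber-at-infinity assignment), and your three steps are exactly that argument, in the same spirit as Lemma~\ref{lemma:spheric}. The only thing to adjust is your closing paragraph: the sub-step you single out as the main obstacle is not delicate, and your claim that ``the convex hull of a point and a sub-Weyl chamber need not fill out the ambient Weyl chamber'' is false for the notion of convexity used in this paper (intersections of half-apartments). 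Indeed, if $S$ is a Weyl chamber based at $x$ and $S''$ a parallel sub-Weyl chamber of $S$, then any half-apartment containing $S''$ is defined by a root which is nonnegative on the direction cone of $S$ (a root taking a negative value on that cone is unbounded below on $S''$, using the $F$--module structure of $\Lambda$ to scale an $F$--rational direction by large elements of $\Lambda$), and any such half-apartment that also contains $x$ contains all of $S$; hence $S$ is exactly the convex hull of $\{x\}\cup S''$. Consequently, if $T\subset A'$ and $S_2\subset A_2$ are Weyl chambers of $X'$ based at $x$ with the same chamber at infinity, Lemma~\ref{lemma:A4prep} provides a common sub-Weyl chamber $S''$, the intersection $A'\cap A_2$ is convex by the already proven Condition (A2) and contains $\{x\}\cup S''$, hence contains both convex hulls $T$ and $S_2$; since two Weyl chambers of one apartment with the same base point and the same parallelism class coincide, $T=S_2$, so the germ of $S_2$ at $x$ lies in $\Delta_xA'$ and your second step closes. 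With that observation inserted, your write-up is a correct (and more detailed) rendering of what the paper's citation amounts to.
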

\proof
See Proposition~\ref{prop:exis_res}.
\qed

\begin{lemma}\label{lemma:cotje}
Let $S,T$ be two Weyl chambers based at the same point $x \in X'$. If their germs are opposite in $\Delta_xX'$ then there exists a unique apartment of $(X',\cA')$ containing $S$ and $T$.
\end{lemma}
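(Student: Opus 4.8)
The statement is, for the building $(X',\cA')$ constructed in this section, the exact analogue of item~\ref{list:5.23} of Proposition~\ref{prop:list} (that is, of \cite[5.23]{PetraThesis}), and the plan is to run that argument after checking that every ingredient it uses is available here: the building axioms (A1), (A2), (A4), (A6) for $(X',\cA')$, established above; the fact that each residue $\Delta_x X'$ is a spherical building of type $\RS$ (Lemma~\ref{lemma:is_spheric}); the germ--centered retraction $r_{A,\mu}$ of Definition~\ref{Def_vertexRetraction}, which is distance non-increasing and restricts to an isomorphism on every apartment containing $\mu$, together with the fact that $X'$ is the union of all apartments containing a fixed germ (the analogue of item~\ref{list:tec18}); and Lemma~\ref{lemma:inap2}.

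First I would isolate the relevant model--space fact. If $S$ is a Weyl chamber based at $x$ corresponding, under a chart, to $x'+w_1\Cf$, then the chamber of that apartment based at $x$ whose germ is opposite $\Delta_x S$ in the spherical Coxeter complex underlying $\Delta_x X'$ is the one corresponding to $x'+w_1 w_0\Cf$, where $w_0\in\sW$ is the longest element; since the opposition involution $-w_0$ is a diagram automorphism we have $w_0\Cf=-\Cf$ as subsets, so this chamber is, as a set, the point reflection of $S$ in $x$. Hence inside \emph{any} apartment $A$ containing a Weyl chamber $S$ based at $x$ and a Weyl chamber $T$ based at $x$ with opposite germ, $T$ equals the point reflection of $S$ in $x$, no proper half--apartment of $A$ contains $S\cup T$ (one would be forced to have a root vanish identically on the full--dimensional cone $w_1\Cf$), and therefore the only convex subset of $A$ containing $S\cup T$ is $A$ itself.

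\emph{Uniqueness} follows immediately: if apartments $A'$ and $A''$ of $(X',\cA')$ both contain $S$ and $T$, then by Condition (A2) the set $A'\cap A''$ is a convex subset of $A''$ containing $S\cup T$, hence equals $A''$; so $A''\subseteq A'$, and symmetrically $A'\subseteq A''$, giving $A'=A''$.

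\emph{Existence} is the step I expect to carry the weight: one must exhibit an apartment containing the \emph{full} chambers $S$ and $T$, not merely their germs or sub--chambers, and the apartment through $\mu:=\Delta_x S$ and $\nu:=\Delta_x T$ supplied by the spherical building $\Delta_x X'$ need not contain all of $S$ or of $T$, because of branching. Following \cite[5.23]{PetraThesis}, I would fix an apartment $A_0\supseteq S$ (every Weyl chamber of $X'$ lies in an apartment, by the definition of Weyl chambers), let $T_0\subseteq A_0$ be the chamber at $x$ whose germ is opposite $\mu$ in $\Delta_x A_0$, and use the retraction $\rho:=r_{A_0,\mu}$ of $X'$ onto $A_0$ centered at $\mu$: since $X'$ is the union of apartments containing $\mu$ and $\rho$ is isometric on each of them, $\rho$ maps the ``far'' chamber $T$ onto $T_0$. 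One then shows that the map from $A_0$, which equals $\mathrm{conv}_{A_0}(S\cup T_0)$, into $X'$ that is the identity on $S$ and inverts $\rho|_T$ on $T_0$ is an isometric embedding whose image is an apartment $A'$ of $(X',\cA')$; by construction $A'$ contains $S$ and contains the point reflection of $S$ in $x$, which is $T$. The main obstacle is this gluing step --- checking that the map is well defined and isometric and that its image carries a chart in $\cA'$ --- which rests on the building axioms for $(X',\cA')$ and on the structure of $\Delta_x X'$; the rest is routine.
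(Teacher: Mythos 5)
Your overall strategy coincides with the paper's: the proof given there is literally a pointer to Proposition~\ref{prop:list}(\ref{list:5.23}) (i.e.\ to \cite[5.23]{PetraThesis}), the point being that that argument transfers to $(X',\cA')$ once the ingredients established earlier in the section are in place; and your uniqueness argument, via (A2) together with the observation that no proper convex subset of an apartment contains two Weyl chambers based at the same point with opposite germs, is exactly right.

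One concrete caution about your ingredient list, however. You declare ``available'' the germ-centered retraction $r_{A_0,\mu}$ on all of $X'$ together with the analogue of item~\ref{list:tec18} (that $X'$ is the union of the apartments containing a fixed germ). Neither has been established for $(X',\cA')$ at this point of the paper, and the blanket statement cannot be established here: its usual proof uses (A3), whereas (A3) for $(X',\cA')$ is proven only \emph{after} this lemma --- indeed by means of it --- so assuming it wholesale is circular. What your argument actually needs, and what does follow from the lemmas already proven, is the restricted statement that every point $y$ of $T$ lies in an apartment containing $\mu=\Delta_x S$: take an apartment $A_T\supseteq T$, let $U$ be a Weyl chamber of $A_T$ based at $x$ containing $y$, apply Lemma~\ref{lemma:inap2} to $S$ and the chamber $\partial U$ at infinity, and use that a Weyl chamber of $(X',\cA')$ is determined by its base point and its chamber at infinity (this follows from (A2) and Lemma~\ref{lemma:A4prep}, by noting that the convex hull of the base point and a common sub-Weyl chamber is the whole Weyl chamber). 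With the retraction invoked only on $T$ in this way, the gluing step you flag as the main obstacle is precisely the content of the transferred proof of \cite[5.23]{PetraThesis}, which is all the paper's own proof appeals to; as written, though, your version leaves that step undone while resting it on an ingredient that is not yet legitimately in play.
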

\proof
See Proposition~\ref{prop:list}, item~\ref{list:5.23}.
\qed

The above lemmas allow us to reason with residues, which is crucial in proving the following covering result. 

\begin{prop}\label{prop:cover}
Let $A_1$ and $A_2$ be two apartments of $(X',\cA')$. Let $\mathcal{C}\define \mathcal{C}(A_1, A_2)$ be the set of apartments containing at infinity two chambers $c_i\in\partial A_i$, $i=1,2$,  which are opposite in $\partial_\cA X$. Then $\mathcal{C}$ is a finite set of apartments such that each pair of points $(x,y)\in A_1\times A_2$ is contained in one of these apartments in $\mathcal{C}$.
\end{prop}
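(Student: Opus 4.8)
The plan is to reduce the statement to the corresponding (already established) fact about the spherical building $\partial_\cA X$. First I would show that $\mathcal{C}$ is finite: an apartment $A\in\mathcal{C}$ contains at infinity a pair of opposite chambers $c_1\in\partial A_1$, $c_2\in\partial A_2$, and by Lemma~\ref{lemma:cotje} the choice of $c_1$ (equivalently, of the unique opposite Weyl-chamber germ at a base point) together with $c_2$ determines a unique Weyl chamber $T$ based at some point, hence a unique apartment containing the two opposite germs. Since $\partial A_1$ and $\partial A_2$ each have only finitely many chambers, there are only finitely many such pairs $(c_1,c_2)$, and for each at most one apartment arises; hence $\mathcal{C}$ is finite. (One should note that $\mathcal{C}$ is nonempty: since $\partial_\cA X$ is a spherical building, any chamber $c_1\in\partial A_1$ has an opposite chamber, and such an opposite can be found inside the apartment $\partial A_2$ up to passing to a chamber of $\partial A_2$, because in a spherical building every two apartments share a chamber and opposition inside one apartment is realized.)

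Second, and this is the substance, I would fix $(x,y)\in A_1\times A_2$ and produce an apartment of $\mathcal{C}$ containing both. Let $A_i$ be the apartment of $(X,\cA)$ corresponding to $A_i'$ under the bijection from part~(2) of Lemma~\ref{lemma:prepphi}; write $\phi$ for the embedding $X\hookrightarrow X'$. The points $x,y$ lie in $A_1', A_2'$ respectively, but need not lie in the image of $\phi$. The idea is: a Weyl chamber $S_1$ based at $x$ inside $A_1'$ determines a chamber $c_1$ at infinity in $\partial A_1\subset\partial_\cA X$, and similarly a Weyl chamber $S_2$ based at $y$ in $A_2'$ gives $c_2\in\partial A_2$. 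I would choose $S_1$ (by varying over the finitely many Weyl chambers of $A_1'$ based at $x$) and $S_2$ so that $c_1$ and $c_2$ are opposite in $\partial_\cA X$; this is possible because, working in the spherical building $\partial_\cA X$ and using that $\partial A_1,\partial A_2$ are apartments there, one can pick $c_1\in\partial A_1$ and then take for $c_2$ a chamber of $\partial A_2$ opposite to it — opposition between a chamber of one apartment and a chamber of another is always attainable in a spherical building. By Lemma~\ref{lemma:inap2} (applied with the chamber $c_2$) there is an apartment $A\in\cA'$ containing the germ of $S_1$ at $x$ and with $c_2\subset\partial A$; since a germ at $x$ of a Weyl chamber pointing toward $c_1$ together with $c_1\in\partial A$ forces $x\in A$ and $c_1\subset\partial A$, we get $x\in A$. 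Symmetrically one arranges $y\in A$, using Lemma~\ref{lemma:cotje}: the germs of $S_1$ at $x$ and of the opposite Weyl chamber determine $A$ uniquely, and one checks $y$ lies in it by running the same argument from $y$'s side and invoking uniqueness. By construction $\partial A$ contains $c_1\in\partial A_1$ and $c_2\in\partial A_2$ which are opposite, so $A\in\mathcal{C}$.

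The main obstacle I anticipate is the bookkeeping needed to guarantee that a \emph{single} apartment $A$ simultaneously contains $x$ and $y$: each of $x$ and $y$ constrains $A$ through a germ-plus-chamber-at-infinity datum, and one must verify these two constraints are compatible and pin down the same apartment. The clean way is to use Lemma~\ref{lemma:cotje} to make $A$ the \emph{unique} apartment containing two opposite Weyl-chamber germs, one based at $x$ with chamber $c_1$ at infinity and one based at $y$ with chamber $c_2$ at infinity — but for this one needs the germ at $y$ of a Weyl chamber toward $c_2$ and the germ at $x$ of one toward $c_1$ to have opposite germs at a \emph{common} base point, which is exactly the content of the retraction/convexity machinery (Corollary~\ref{cor:sectors1} and the germ lemmas) transported to $(X',\cA')$. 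Concretely: first build, via Lemma~\ref{lemma:inap2}, an apartment $A$ through the germ of $S_1$ at $x$ with $c_2$ at infinity; inside $A$ there is then a Weyl chamber based at some point with $c_2$ at infinity whose germ is opposite that of $S_1$ at $x$; then show $y\in A$ by comparing $A$ with $A_2'$ using convexity of $A\cap A_2'$ (Condition (A2), already proven for $(X',\cA')$) and the fact that both contain a Weyl chamber toward $c_2$. Once $x,y\in A$ and the opposition at infinity holds, $A\in\mathcal{C}$ and the pair $(x,y)$ is covered, completing the proof together with the finiteness established above. $\qed$
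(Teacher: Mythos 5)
Your finiteness argument is essentially fine (each opposite pair $(c_1,c_2)\in\partial A_1\times\partial A_2$ determines at most one apartment of $(X',\cA')$, since apartments correspond bijectively to apartments at infinity), but the covering part has a genuine gap, and it sits exactly at the step you yourself flag as the main obstacle. Two problems. First, Lemma~\ref{lemma:inap2} gives you an apartment $A$ containing the germ of $S_1$ at $x$ and having $c_2$ in its boundary; this does give $x\in A$, but it does \emph{not} give $c_1\subset\partial A$: two Weyl chambers with the same germ at $x$ need not be parallel (already in a tree, two rays with the same initial germ at $x$ can have different ends), so the germ datum does not pin down the chamber at infinity. Second, and fatally, your proposed way of getting $y\in A$ --- convexity of $A\cap A_2'$ plus the fact that both $A$ and $A_2'$ contain a Weyl chamber toward $c_2$ --- does not work. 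Those facts only say that $A\cap A_2'$ is convex and contains a sub-Weyl chamber with $c_2$ at infinity; the prescribed point $y\in A_2'$ can lie outside this intersection. In a tree: let $A_2'$ be a line with ends $c_2$ and $d_2$, let $y$ lie toward $d_2$, and let the line $A$ through a germ at $x$ with end $c_2$ branch off $A_2'$ at a point $p$ strictly between $y$ and $c_2$; then $A\cap A_2'$ is the ray from $p$ toward $c_2$ and $y\notin A$. A single application of Lemma~\ref{lemma:inap2} at $x$ gives no control over where $A$ meets $A_2'$, so the gluing step fails.

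The paper supplies exactly the missing ingredient, in two stages. Stage one: if $x$ and $y$ already lie in \emph{some} common apartment $B$, it is upgraded to an apartment of $\mathcal{C}$ by a local opposition argument: take the Weyl chamber $S_1\subset B$ based at $x$ containing $y$, choose (using Lemma~\ref{lemma:is_spheric}) a Weyl chamber $T_1\subset A_1$ based at $x$ whose germ is opposite that of $S_1$, apply Lemma~\ref{lemma:cotje}, pass to the parallel Weyl chamber $T_2$ based at $y$, repeat the opposition inside $A_2$ at $y$, and apply Lemma~\ref{lemma:cotje} again; the resulting apartment contains both points and has $\partial T_2\in\partial A_1$ opposite a chamber of $\partial A_2$, hence lies in $\mathcal{C}$. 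Stage two, for which your proposal has no counterpart: prove that \emph{every} pair $(x,y)\in A_1\times A_2$ lies in some common apartment. This is done by considering the set $K\subset A_1$ of points sharing an apartment with $y$: it is a finite union of convex sets (by stage one and the already proven (A2)), it is nonempty (Lemma~\ref{lemma:A4prep} combined with Lemma~\ref{lemma:inap2}), and if $K\neq A_1$ one picks a ``border'' point $z\in K$ with a germ $\Delta_z R$ not in $K$ and uses Lemma~\ref{lemma:inap2} at $z$, with the chamber at infinity of a Weyl chamber based at $z$ containing $y$, to produce an apartment containing both $\Delta_z R$ and $y$ --- a contradiction. This exhaustion argument is what replaces your invalid gluing step; without it the covering statement is not proved.
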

\proof
Consider two points $x_1 \in A_1$ and $x_2 \in A_2$. We first prove that if these points are contained in a common apartment $B$, then they also lie in an apartment in $\mathcal{C}$. We choose an $x_1$--based Weyl chamber $S_1$ in $B$ containing $x_2$. Lemma~\ref{lemma:is_spheric} implies the existence of an $x_1$--based Weyl chamber $T_1$ contained in $A_1$ whose germ is opposite the germ of $S_1$ at $x_1$. 
By Lemma~\ref{lemma:cotje} the set $T_1\cup S_1$ will be contained in some apartment $B'$. 

Let $T_2$ be the $x_2$--based Weyl chamber parallel to $T_1$, i.e. $\partial T_2=\partial T_1$. It is easy to see that $T_2$ contains $x_1$. We denote by $T_2'$ the $x_2$--based Weyl chamber in $A_2$ whose germ at $x_2$ is opposite $\Delta_{x_2}T_2$. 
Again by Lemma~\ref{lemma:cotje} we obtain a unique apartment $B''$ containing $T_2$ and $T'_2$.  This apartment $B''$ contains $x_1$ and $x_2$. Since apartments of $(X',\cA')$ are in one-to-one correspondence with apartments in $\partial_\cA X$ the apartment $B''$ is uniquely determined by the chambers $\partial T_2$ and $\partial T_2'$ in its boundary. By construction the apartment $B''$ is contained in $\mathcal{C}$. So we conclude that if two points $x_i\in A_i$, $i=1,2$ lie in a common apartment, then they also lie in a common apartment in $\mathcal{C}$.

Now suppose that the two points $x_i\in A_i$ do not lie in one apartment. Let $K$ be the set of points in $A_1$ which do lie in an apartment with $x_2$. Note that due to the above discussion and the fact that (A2) is already proven for $(X',\cA')$ we have that $K$ is a finite union of convex sets. Let us show that $K$ is non-empty. Let $\partial S$ be a chamber at infinity of the apartment $A_1$. Lemma~\ref{lemma:inap2} implies that there exists an apartment $A$ in $(X',\cA')$  containing $x_2$ and $\partial S$ at infinity. By applying Lemma~\ref{lemma:A4prep} one obtains that $A$ and $A_1$ in $(X',\cA')$ share at least a Weyl chamber. So $K$ is not empty, but also not the entirety of $A_1$ as $x_1 \notin K$. 

Because $K$ is a finite union of convex sets, one can find a point $y$ in $K$ such that not all the germs based at $y$ lie in $K$ (a point at the ``border'' of $K$). Let $\Delta_y R$ be such a germ, and $c$ a chamber at infinity of a Weyl chamber $U$ based at $y$ containing $x_2$ (possible because there exists an apartment containing both). Lemma~\ref{lemma:inap2} yields that there exists an apartment $A'$ containing the germ $\Delta_y R$ and $c$ at infinity. Apartment $A'$ also contains the Weyl chamber $U$ (so $x_2$ as well) due to (A2). The germ $\Delta_y R$ lies in $K$, contradicting the way we have chosen $\Delta_y R$. So we obtain that $K$ contains all points of $A_1$. The proposition is hereby proven.
\qed

As a consequence of this covering result we are now able to prove the two remaining conditions.

\begin{cor}
The object $(X',\cA')$ satisfies Conditions (A3) and (A5).
\end{cor}
\proof
The above proposition implies that a pair of points always is contained in a common apartment, which exactly is (A3). It also follows that given some germ $\mu$ of a Weyl chamber of $X'$ and point $x \in X'$ one can as well find an apartment containing these both. So one can define retraction-like maps as in Definition~\ref{Def_vertexRetraction}. As we have already proven Condition (A2) these maps are well-defined. They are indeed retractions reusing the same argument found in Appendix C of \cite{PetraThesis}. Hence (A5) is satisfied.
\qed

So we have proven that $(X',\cA')$ is indeed a $\Lambda'$--building. The next lemma states a link between the distance functions of both generalized affine buildings.

\begin{lemma}
Let $x$ and $y$ be two points of $X$, then $d'(\phi(x),\phi(y))= e(d(x,y))$, where $d$, $d'$ are the distance functions defined respectively on $(X,\cA)$ and $(X',\cA')$. 
\end{lemma}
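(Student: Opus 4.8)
The plan is to reduce the statement to the distance computation already carried out for the model spaces at the start of this section. First I would use Condition (A3) for $(X,\cA)$ to pick a chart $f\in\cA$ and points $a,b\in\MS$ with $f(a)=x$ and $f(b)=y$. By the definition of $\phi$ (applied using this chart $f$ on the apartment $f(\MS)$) we then have $\phi(x)=(f,\iota(a))\diagup_{\sim}=\psi(f)(\iota(a))$ and, similarly, $\phi(y)=\psi(f)(\iota(b))$. In particular both $\phi(x)$ and $\phi(y)$ lie in the single apartment $\psi(f)(\MS')$ of $(X',\cA')$, whose chart $\psi(f)\in\cA'$ is injective by Lemma~\ref{lemma:prepphi}.

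Next, since we have already verified that $(X',\cA')$ is a $\Lambda'$--building, Conditions (A1)--(A3) guarantee that its distance function $d'$ is well defined, i.e.\ it may be computed inside any apartment containing the two given points. Computing in the apartment $\psi(f)(\MS')$ via the chart $\psi(f)$ therefore gives
$$
d'(\phi(x),\phi(y)) = d_{\MS'}\bigl(\iota(a),\iota(b)\bigr),
$$
where $d_{\MS'}$ denotes the standard $\Lambda'$--valued distance on $\MS'$.

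Finally I would invoke the explicit computation made at the beginning of Section~\ref{section:thirdMain}, which shows $d_{\MS'}(\iota(a),\iota(b))=e(d_{\MS}(a,b))$ for all $a,b\in\MS$; combined with the fact that the chart $f$ is an isometry, so that $d_{\MS}(a,b)=d(x,y)$, this yields $d'(\phi(x),\phi(y))=e(d(x,y))$, as claimed. The argument is essentially bookkeeping: the only point requiring care is that the value of $d'$ obtained from the particular apartment $\psi(f)(\MS')$ coincides with the intrinsic distance on $X'$, which is precisely what the already-established Condition (A2) for $(X',\cA')$ provides; the genuine content, that $\iota$ scales distances by $e$, was dispatched by the explicit calculation with co-root evaluations. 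Hence there is no real obstacle left here.
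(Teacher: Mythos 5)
Your proposal is correct and follows essentially the same route as the paper: choose a common chart via Condition (A3), observe that $\phi(x)$ and $\phi(y)$ lie in the apartment $\psi(f)(\MS')$ with $d'(\phi(x),\phi(y))=d'(\iota(a),\iota(b))$, and conclude by the co-root computation at the start of Section~\ref{section:thirdMain} showing $\iota$ scales distances by $e$. Your write-up merely makes the well-definedness bookkeeping (via (A2) for $(X',\cA')$) more explicit than the paper does.
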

\proof
Let $f\in \cA$, $a,b \in \MS$ such that $x=f(a)$ and $y=f(b)$ (possible by Condition (A3)). Then $d(x,y)$ equals $d(a,b)$, and $d'(\phi(x),\phi(y))$ equals $d'(\iota(a),\iota(b))$. The equality we need to prove follows from the discussion in the beginning of the section.
\qed

This concludes the proof of the first assertion of the third Main Result~\ref{theorem:thirdmain}. 

For two $\Lambda$--buildings $(X_1,\cA_1)$ and $(X_2,\cA_2)$ with an isometric embedding $\psi$ from $X_1$ to $X_2$ mapping apartments to apartments it is easy to find a map $\phi'$ between the resulting $\Lambda'$--buildings $(X_1',\cA_1')$ and $(X_2',\cA_2')$ (also mapping apartments to apartments) by the base change functors $\phi_1$ and $\phi_2$ such that the following diagram commutes:
$$
\xymatrix{
	X _1\ar[r]^{\psi}\ar[d]_{\phi_1} 	& X_2 \ar[d]^{\phi_2}\\
	X'_1 \ar[r]^{\psi'} & X'_2  
}
$$
This assures functoriality.

% %%%%%%%%%%%%%%%%%%%%%%%%%%%

% %%%%%%%%%%%%%%%%%%%%%%%%%%%
% Applications
\section{Applications}\label{sec:applications}
In this section we prove some applications of the main results. A summary of these can be found in Section~\ref{section:pres_apps}.

\subsection{Asymptotic cones of $\Lambda$--buildings}\label{sec:cones}

We will use the base change functor to prove that asymptotic cones as well as ultracones of generalized affine buildings are again generalized affine buildings.

An \emph{ultrafilter} $\mu$ on an infinite set $I$ is a finite additive measure $\mu:2^I\to \{0,1\}$ such that $\mu(I)=1$. We say that $\mu$ is \emph{principal} if it is a Dirac measure, i.e. concentrated in one element of $I$. 
Sometimes we identify $\mu$ with the subset $F_\mu$ of the powerset $2^I$ consisting of the sets having measure one. Therefore another way of stating that $\mu(A)=1$ is to say that $A$ is contained in $F_\mu$. To simplify (and slightly abuse) notation we write $A\in \mu$ instead of $A\in F_\mu$. 

\begin{definition}
Given an ultrafilter $\mu$ on an infinite set $I$ we may define the \emph{ultraproduct} of a sequence of sets $(X_i)_{i\in I}$ as follows:
Let $\tilde X\define \Pi_{i\in I} X_i$, be the Cartesian product of the $X_i$. We define a relation $\sim$ on $\tilde X$ by
$$
x\sim y \;\Leftrightarrow\; \mu(\{i\in I \;:\; x_i=y_i\} )=1.
$$
It is easy to see that $\sim$ is an equivalence relation on $\tilde X$. The \emph{ultraproduct}  $\uprod X$ of the sequence  $(X_i)_{i\in I}$ is the space of equivalence classes $\uprod x$ of sequences $x=(x_i)_{i\in I}$ with respect to this equivalence relation.
In case that $X_i=X$ for all $i\in I$ we call $\uprod X$ the \emph{ultrapower} of $X$.
\end{definition}

{\L}o\v{s}'s Theorem, see 2.1 on page 90 in \cite{BellSlomson}, tells us that anything which may be stated in a first order language on the level of the components $X_i$ is still true for the ultraproduct of the sequence.

One easily observes, defining multiplication componentwise, that the ultraproduct of a sequence of abelian groups $\Lambda_i$ is again an abelian group. If each $\Lambda_i$ is totally ordered, then the ultraproduct carries a natural ordering defined as follows. An element $\uprod(a_n)_{n\in I}$ of $\uprod\Lambda$ is \emph{smaller or equal} to $\uprod(b_n)_{n\in\N}\in{\uprod\Lambda}$ if the set $\{n\in  I : a_n\leq b_n\}$ has measure one.

The ultraproduct $\uprod X$ of a sequence $(X_i, d_i)_{i\in I}$ of $\Lambda_i$--metric spaces carries a natural $\uprod\Lambda$--valued metric. The distance function $\ud$ on $\uprod X$ is defined by 
$$
\ud({\uprod} x, {\uprod} y)\define {\uprod} (d_i(x_i, y_i) )_{i\in I} \;\;\in{\uprod} \Lambda,
$$
which is simply the equivalence class of the sequence of distances of the components $x_i$ and $y_i$ in the corresponding metric space $X_i$.

Fix a spherical root system $\RS$ and denote by $\sW$ its spherical Weyl group. We consider a sequence $(X_i,\cA_i)_{i\in I}$ of generalized affine buildings, where $(X_i,\cA_i)$ is modeled on $\MS_i=\MS(\RS,\Lambda_i)$, with $\Lambda_i$ a totally ordered abelian group. Here it is important that the underlying spherical root system is the same for all factors.
Further assume that each $X_i$ carries a $\Lambda_i$--valued metric $d_i$. 

We will prove that the ultraproduct $\uprod X$ of $(X_i,\cA_i)_{i\in I}$ is again a generalized affine building. In order to do so, we need to take the product of all structural features, such as  charts or apartments, the Weyl groups, the distance function, \dots{ }simultaneously.
This process might be formalized in terms of first order languages and formulas on a certain set which involves all these structures. Taking the product of such a ``universal setting'' will allow us to talk about the same structures in the ultraproduct which we already had in the components themselves. However, we will not carry out the details here. We refer the interested reader to chapter 5 of Bell and Slomson's book \cite{BellSlomson} on models and ultraproducts.

\begin{prop}\label{prop:ultraproducts}
The ultraproduct $(\uprod X,{\uprod}\cA)$ is a generalized affine building  modeled on $\MS(\RS, {\uprod}\Lambda)$ and carries a natural $\uprod\Lambda$--metric $\uprod d$ induced by $d$. 
\end{prop}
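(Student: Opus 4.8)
The plan is to verify the six building axioms (A1)--(A6) from Definition~\ref{Def_LambdaBuilding} for the pair $(\uprod X, \uprod\cA)$, together with the fact that $\uprod\Lambda$ is a totally ordered abelian group carrying the appropriate $F$--module structure, where each step reduces to the corresponding statement for the components $X_i$ via \L o\v{s}'s Theorem. First I would set up the ultraproduct carefully: the model space $\MS(\RS,\uprod\Lambda)$ is canonically the ultraproduct of the $\MS_i=\MS(\RS,\Lambda_i)$ (since $\RS$, and hence the basis $B$ and the field $F$ of co-root evaluations, is common to all factors, the coordinatewise description $\sum_{\alpha\in B}\lambda_\alpha\alpha$ passes to the ultraproduct); the affine Weyl group $\uprod\aW$ is the ultraproduct of the $\aW_i$; a chart $\uprod f\in\uprod\cA$ is represented by a sequence $(f_i)_{i\in I}$ with $f_i\in\cA_i$, acting by $\uprod f(\uprod a)=\uprod(f_i(a_i))$; and $\uprod d$ is as defined just before the proposition. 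One should note that $\uprod\Lambda$ is nontrivial exactly when the $\Lambda_i$ are eventually nontrivial along $\mu$, which we assume.

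Next I would observe that each axiom (A1)--(A6) is a first-order statement (quantifying over points, charts, and Weyl-group elements — objects that all live in the ``universal structure'' alluded to in the text before the proposition), so \L o\v{s}'s Theorem applies directly. Concretely: (A1) says every $\uprod f\circ\uprod w$ lies in $\uprod\cA$, which holds since $f_i\circ w_i\in\cA_i$ for $\mu$-almost all $i$. (A3) (two points lie in a common apartment) and (A4) (two Weyl chambers have sub-Weyl-chambers in a common apartment) are existential first-order statements over the component data, so they transfer. (A6), the triple-intersection condition for apartments meeting pairwise in half-apartments, is likewise first-order. The distance formula $\uprod d(\uprod x,\uprod y)=\uprod f(\uprod x',\uprod y')$-compatibility and the $\aW$-invariance of the standard metric transfer the same way, so $\uprod d$ is indeed the metric induced by the standard distance on $\MS(\RS,\uprod\Lambda)$. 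Finally (A5) is immediate here as well: a retraction $r_{A,x}$ in each component is distance non-increasing with singleton fibre over $x$, and the ultraproduct of these maps inherits both properties by \L o\v{s}; alternatively one may invoke \cite{Schwer} since (A1)--(A4),(A6) already hold.

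The one axiom requiring genuine care is (A2), because it asserts that $\uprod f^{-1}(\uprod g(\MS(\RS,\uprod\Lambda)))$ is \emph{convex} — i.e.\ a \emph{finite} intersection of half-apartments — and the number of half-apartments needed, a priori depending on $i$, must be bounded $\mu$-almost everywhere for the ultraproduct to again be a finite intersection. Here the rescue is that the ambient spherical root system $\RS$ is fixed: a convex subset of $\MS(\RS,\Lambda_i)$ is cut out by at most $|\RS|$ half-apartments (one per positive root, as in the coordinate bound $H_{i,k_i}^+$ used in the proof of the first main result), so the bound is uniform in $i$. Thus $\uprod\bigl(\bigcap_{\alpha\in\RS^+}H^+_{\alpha,k_\alpha^{(i)}}\bigr)=\bigcap_{\alpha\in\RS^+}H^+_{\alpha,\uprod(k_\alpha^{(i)})}$ is convex in $\MS(\RS,\uprod\Lambda)$, and the existence of the transition element $\uprod w\in\uprod\aW$ with $\uprod f=(\uprod g\circ\uprod w)$ on this set follows by \L o\v{s} from (A2) for each component. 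The uniformity of the root-system data is exactly why the hypothesis ``defined over the same root system $\RS$'' is essential, and I expect verifying this uniform bound to be the only real obstacle; everything else is a routine application of \L o\v{s}'s Theorem to the component buildings.
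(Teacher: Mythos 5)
Your proposal is correct and follows essentially the same route as the paper: one first identifies $\uprod\MS$ with $\MS(\RS,\uprod\Lambda)$ (the paper does this by identifying points with elements of the full affine Weyl group and using finiteness of $\sW$, you via the common coordinate description over $B$), and then transfers (A1)--(A6) componentwise by \L o\v{s}'s Theorem. Your additional point that (A2) transfers because every convex set is cut out by a uniformly bounded number (at most $|\RS|$) of half-apartments is a useful detail that the paper leaves implicit in its ``easy to deduce'' remark, but it is a refinement of, not a departure from, the paper's argument.
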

\proof
The points and charts of $(\uprod X,{\uprod} \cA)$ are equivalence classes of sequences $(x_i)_{i\in I}$ (resp. $(f_i)_{i\in I}$), where $x_i$ is a point in $X_i$ and $f_i$ a chart in $\cA_i$.

First we will prove that the ultraproduct $\uprod \MS$ of the model spaces $\MS_i$ equals $\MS(\RS, {\uprod}\Lambda)$. Identifying points of $\MS_i$ with elements of the full affine Weyl group $\aW_i\define \sW\ltimes\Lambda_i^n$, with $n=\mathrm{rank}(\RS)$, this question can be answered by proving that $\uprod\aW\define {\uprod ( \aW_i)_{i\in I}} = \sW\ltimes ({\uprod\Lambda} )^n$.

Abbreviate $\Lambda_i^n$ by $T_i$.  Since the spherical Weyl group is finite, every sequence $(w_i)_{i\in I}$, with $w_i\in\sW$, is equivalent to a constant sequence $(u)_{n\in I}$. Hence given representatives $(u_i, s_i)_{i\in I}$ and $(v_i, t_i)_{i\in I}$ of elements $\uprod u, \uprod v$ of $\uprod \aW$, we observe
$$
 (u_i, s_i)_{i\in I}  \cdot (v_i, t_i)_{i\in I}
	=  (u_i v_i,\; s_i + r_{u_i}(t_i) )_{i\in I} 
	\sim ( uv, s_i + r_{u}(t_i))_{i\in I} 
$$
The equivalence class w.r.t. $\mu$ of the sequence $( uv, s_i + r_{u}(t_i))_{i\in I} $ is obviously an element of $ \sW\ltimes {\uprod T}.$ 

Interpreting Conditions (A1) to (A6) componentwise on sequences and using the fact that each factor $(X_i, \cA_i)$ satisfies these Conditions it is easy to deduce from  {\L}o\v{s}'s Theorem, that the ultraproduct satisfies all the Conditions as well. Hence the claim.
\qed

From now on we will restrict ourselves to ultrapowers. Fix once and for all an ordered abelian group $\Lambda$. 
The subset of \emph{finite elements} $\uprod\Lambda_{fin}$ of the ultrapower of an ordered abelian group $\Lambda$ is defined by
$$
\uprod\Lambda_{fin}=\{\alpha\in{\uprod\Lambda} : -c< \alpha <c \;\text{ for some } c\in\Lambda_+\}.
$$
An easy computation shows that $\uprod\Lambda_{fin}$ is a convex subgroup of $\uprod\Lambda$. 

\begin{definition}
We define an equivalence relation $\sim_{fin}$ on $\uprod X$ by 
$$
\uprod x\sim_{fin} {\uprod y} \;\Leftrightarrow\; \ud(\uprod x,\uprod y) \leq n \text{ for some } n\in\Lambda.
$$
The \emph{ultracone}  $\Ucone(X) = {\uprod X}/_{\sim_{fin}}$ of the metric space $X$ is again a metric space whose metric $\ud_{fin}$ is the $\uprod\Lambda/{\uprod\Lambda_{fin}}$--quotient-metric induced by $\ud$. 
\end{definition}

\begin{definition}%asympt. cone
Let $\mu$ be a non-principal ultrafilter, $\alpha=(\alpha_i)_{i\in I}$ a non-finite element of $\uprod \Lambda$ and assume that $(X,d)$ is a $\Lambda$--metric space with chosen base point $o$. Clearly $\uprod X$ is a $\uprod \Lambda$--metric space. Set 
$
X^{<\alpha>}\define\{ \uprod x\in {\uprod X} \,:\, \ud (\uprod x,o) \leq n \alpha \,\text{ for some } n\in\Lambda \}.
$
We define $\uprod x \sim_\alpha {\uprod y} $ on $X^{<\alpha>}$ if $\ud(\uprod x, {\uprod} y) \,<\, \alpha\cdot\alpha_i^{-1}$ for all $i\in I$.
The set 
$$
\Cone(X)\define X^{<\alpha>}/_{\sim_\alpha}
$$
is called the \emph{asymptotic cone} of $X$. It carries a natural $\Lambda$--metric defined by 
$$
d(x,y)=\mathsf{std}( \ud(x,y)\cdot\alpha^{-1} ),
$$
where $\mathsf{std}$ denotes the standard part of the element $ \ud(x,y)\cdot\alpha^{-1}$. 
\end{definition}

An element $\lambda\in\Lambda$ is the \emph{standard part} of an element $\uprod\lambda$ in $\uprod\Lambda_{fin}$ if the absolute value of the difference satisfies  $\vert  \uprod\lambda - \lambda \vert < \alpha\cdot \alpha_i^{-1}$ for all $i\in I$. 

\begin{theorem}\label{thm:cones}
The class of generalized affine buildings is closed under taking asymptotic cones and ultracones. 
Furthermore, if $(X,\cA)$ is modeled on $\MS(\RS,\Lambda)$, then $\Cone(X)$ has model space $\MS(\RS,\Cone(\Lambda))$ and $\Ucone(X)$ is modeled on $\MS(\RS,\Ucone(\Lambda))$.
\end{theorem}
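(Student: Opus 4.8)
The strategy is to realize both the ultracone and the asymptotic cone as images of a base change functor applied to the ultrapower building, so that the heavy lifting is already done by Proposition~\ref{prop:ultraproducts} and the first main result. First I would start from the ultrapower $(\uprod X, \uprod\cA)$, which by Proposition~\ref{prop:ultraproducts} is a generalized affine building modeled on $\MS(\RS,\uprod\Lambda)$ and carrying the $\uprod\Lambda$--valued metric $\uprod d$. The key observation is that the two equivalence relations defining $\Ucone(X)$ and $\Cone(X)$ are exactly of the form ``$\uprod x \sim \uprod y \Leftrightarrow \uprod d(\uprod x,\uprod y)\in\ker(e)$'' for a suitable epimorphism $e$ of ordered abelian $F$--modules. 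For the ultracone, $e$ is the quotient map $\uprod\Lambda \to \uprod\Lambda/\uprod\Lambda_{fin}$, whose kernel is the convex subgroup $\uprod\Lambda_{fin}$; note $\sim_{fin}$ literally says $\uprod d(\uprod x,\uprod y)\leq n$ for some $n\in\Lambda$, i.e.\ $\uprod d(\uprod x,\uprod y)\in\uprod\Lambda_{fin}$. For the asymptotic cone one first passes to the sub-building $X^{<\alpha>}$ and then takes the base change along the ``rescale by $\alpha^{-1}$ then take standard part'' map; more precisely, multiplication by $\alpha^{-1}$ is an isomorphism of ordered $F$--modules $\uprod\Lambda \to \uprod\Lambda$ carrying the relevant submodule of distances onto $\uprod\Lambda_{fin}$, and $\mathsf{std}:\uprod\Lambda_{fin}\to\Lambda$ is an epimorphism of ordered $F$--modules whose kernel is the convex subgroup of infinitesimals. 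Composing, one gets an epimorphism $e$ with $\ker(e) = \{\,\uprod\lambda : |\uprod\lambda\cdot\alpha^{-1}|<n \text{ for all } n\in\Lambda_+\,\}$, and $\uprod x\sim_\alpha\uprod y$ is precisely $\uprod d(\uprod x,\uprod y)\in\ker(e)$.

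Next I would check that the $F$--module structure is preserved throughout: $\uprod\Lambda$ inherits an $F$--module structure componentwise (each $\Lambda_i$ has one, $F$ being fixed and depending only on $\RS$), and $\uprod\Lambda_{fin}$ as well as the infinitesimal ideal are $F$--submodules, so the quotient maps and $\mathsf{std}$ are morphisms of ordered abelian $F$--modules as required to invoke Main Result~\ref{theorem:firstmain}. For the asymptotic cone I additionally need that $X^{<\alpha>}$ is itself a generalized affine building over $\uprod\Lambda$; this follows from Main Result~\ref{theorem:secondmain}, since $X^{<\alpha>}$ is exactly the fiber $\phi_0^{-1}(\phi_0(\uprod o))$ of the base change functor $\phi_0$ associated to the epimorphism $\uprod\Lambda \to \uprod\Lambda/(\text{distances of size comparable to }\alpha)$ --- equivalently, the set of points whose distance to $\uprod o$ lies in the convex subgroup $\{\uprod\lambda : \uprod\lambda\cdot\alpha^{-1}\in\uprod\Lambda_{fin}\}$, and that subgroup is convex by the same easy computation showing $\uprod\Lambda_{fin}$ is convex. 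So $X^{<\alpha>}$ is a $\ker$--building over this convex subgroup, which is abstractly a totally ordered abelian $F$--module, and $\sim_\alpha$ is then a base change epimorphism on \emph{that} building.

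With these identifications in place, the conclusion is immediate: Main Result~\ref{theorem:firstmain}\ref{item:mr11} produces a $\Lambda'$--building $(X',\cA')$ with $d'(\phi(x),\phi(y)) = e(\uprod d(x,y))$, and by construction of the quotient metrics this building, as a metric space with atlas, is exactly $\Ucone(X)$ (resp.\ $\Cone(X)$). Reading off the model spaces: the target group is $\uprod\Lambda/\uprod\Lambda_{fin} = \Ucone(\Lambda)$ in the first case and $\Lambda \cong \uprod\Lambda_{fin}/(\text{infinitesimals}) = \Cone(\Lambda)$ in the second, so the base change functor delivers buildings modeled on $\MS(\RS,\Ucone(\Lambda))$ and $\MS(\RS,\Cone(\Lambda))$ respectively, which is the second assertion.

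The main obstacle I anticipate is purely bookkeeping rather than conceptual: one must be careful that $\Ucone(\Lambda)$ and $\Cone(\Lambda)$ as defined by the quotient constructions on the group genuinely coincide with $\ker(e)$-quotients in the sense required by the base change theorem, including matching the induced orderings (the quotient order exists precisely because the kernels are convex, which is exactly the hypothesis the base change theorem was set up to exploit), and that the composite ``scale then take standard part'' really is a single epimorphism of ordered $F$--modules with the stated convex kernel. Once that dictionary is nailed down, nothing else is needed --- in particular the isomorphism of buildings at infinity and the functoriality in group actions come for free from the corresponding clauses of Main Results~\ref{theorem:firstmain} and \ref{theorem:secondmain}.
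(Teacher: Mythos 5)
Your strategy is the same as the paper's: build the ultrapower via Proposition~\ref{prop:ultraproducts}, obtain the ultracone as the image of the base change functor for the epimorphism $\uprod\Lambda\to\uprod\Lambda/\uprod\Lambda_{fin}$ (Main Result~\ref{theorem:firstmain}, parts 1 and 2), and obtain the asymptotic cone by first cutting out $X^{<\alpha>}$ as a fiber via Main Result~\ref{theorem:secondmain} and then applying a second base change that kills infinitesimals. Your explicit treatment of the rescaling, via the convex $F$--submodule $\{\uprod\lambda : \uprod\lambda\cdot\alpha^{-1}\in\uprod\Lambda_{fin}\}$ and the order-isomorphism given by multiplication by $\alpha^{-1}$, is in fact more careful than the paper's terse identification of $\phi^{-1}(\phi(\uprod o))$ with $X^{<\alpha>}$.

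One assertion is wrong as stated, though easily repaired: $\mathsf{std}\colon \uprod\Lambda_{fin}\to\Lambda$ is in general \emph{not} an everywhere-defined epimorphism, and $\uprod\Lambda_{fin}$ modulo its convex subgroup of infinitesimal elements is in general not isomorphic to $\Lambda$ (for $\Lambda=\Q$ the quotient is $\R$); standard parts of finite elements need not exist over a general ordered $F$--module. So you cannot take $\Lambda$ as the target of your second base change. The correct target --- and the one the theorem actually asks for --- is the quotient group $\Cone(\Lambda)$, i.e.\ $\uprod\Lambda_{fin}$ modulo the infinitesimals (transported by your rescaling isomorphism); since that kernel is a convex $F$--submodule, the projection is an epimorphism of ordered $F$--modules and Main Result~\ref{theorem:firstmain} applies verbatim, yielding $\Cone(X)$ modeled on $\MS(\RS,\Cone(\Lambda))$. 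This is exactly the paper's step with its epimorphism $e'\colon\uprod\Lambda_{fin}\to\Cone(\Lambda)$; note $\Cone(\Lambda)=\Lambda$ only in special cases such as $\Lambda=\R$, which is why the corollary on $\R$--buildings needs the separate remark that the asymptotic cone of $\R$ is again $\R$.
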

\proof
Let $(X,\cA)$ be a generalized affine building modeled on $\MS(\RS,\Lambda)$ and let $o$ be a base point in $X$ and $d: X\times X\to\Lambda$ a metric.

First simply view $X$ as a metric space and consider its ultracone $\Ucone(X)$. There is then an obvious projection $\pi$ from $\uprod X$ onto $\Ucone(X)$  such that 
$$
\ud_{fin}( \pi(\uprod x) , \pi(\uprod y) ) = e( \ud ( \uprod x, \uprod y) ),
$$
where $e$ is the projection from $\uprod \Lambda$ to $\uprod\Lambda_{fin}$.
By 1 of our Main Result~\ref{theorem:firstmain} there exists a $\uprod\Lambda_{fin}$--affine building $(X',\cA')$, which is the image of the base change functor $\phi$ associated to $e:\uprod\Lambda\to \uprod\Lambda_{fin} $. Using the observations made above we may deduce from 2 of our first Main Result that $\Ucone(X)$ is isomorphic to $X'$ and carries the structure of a $\uprod\Lambda_{fin}$--affine building.

We now apply similar arguments in the case of asymptotic cones. By our second Main Result \ref{theorem:secondmain} the set $X''\define\phi^{-1}(\phi_e(0))$ is a $\ker(e)$--affine building. It is easy to see that $X''$ coincides with $X^{<\alpha>}$, which was defined to be the set of sequences whose distance to the constant sequence $o$ is an element of $\uprod\Lambda_{fin}$, that is a finite element of $\uprod\Lambda$.

The asymptotic cone of $X$ is the quotient space of $X^{<\alpha>}$ by the relation $\sim_\alpha$, that is identifying sequences having infinitesimal distance to $o$. This corresponds to the image of the base change functor $\phi'$ associated to the projection $e':\uprod\Lambda_{fin} \to \Cone(\Lambda)$. Hence by Main Result~\ref{theorem:firstmain} the asymptotic cone $\Cone(X)$ is isomorphic to the $\Cone(\Lambda)$--affine building $\phi'(X^{<\alpha>})$.
\qed

\subsection{Reducing to the $\R$-building case: a fixed point theorem}\label{section:reducing}
This last application is an example of how our base change functors can reduce problems of $\Lambda$-buildings to the more known and familiar case of $\Lambda = \R$. We will demonstrate this by proving a fixed point theorem for certain $\Lambda$-buildings. The main tool herein is an embedding theorem by Hahn. First we need to define the Hahn product. Given an ordered set $I$ and collection $(\Lambda_i)_{i \in I}$ of ordered abelian groups of order, then the \emph{Hahn product} is the subgroup of $\prod_{i \in I} \Lambda_i$ where the set $I' \subset I$ of indices with non-zero entries of element is always well-ordered w.r.t. the reverse ordering of $I$ (this means that each non-empty subset of $I'$ has a maximal element). This subgroup carries a natural lexicographical ordering. %We will denote this Hahn product as $\hat\prod_{i \in I}$.

\begin{theorem}[Hahn's embedding theorem,~\cite{Hahn}]
Given an ordered abelian group $\Lambda$, then there exists an ordered set $I$ such that $\Lambda$ is isomorphic as ordered abelian group to a subgroup of the Hahn product of copies of the real numbers $\R$ over an ordered set $I$.  
\end{theorem}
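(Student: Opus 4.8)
The plan is to reduce the statement to two classical inputs: the theorem of H\"older, which says that an Archimedean ordered abelian group embeds as an ordered group into $(\R,+)$, and the fact that Hahn products are pseudo-complete (``maximally valued'') among valued abelian groups. First I would organize $\Lambda$ according to its \emph{Archimedean classes}. For nonzero $a\in\Lambda$ put $|a|=\max(a,-a)$, write $a\preceq b$ if $|a|\le n|b|$ for some $n\in\N$, and $a\asymp b$ if $a\preceq b$ and $b\preceq a$; this is an equivalence relation on $\Lambda\setminus\{0\}$. Let $I$ be the set of equivalence classes, totally ordered by declaring $[a]<[b]$ precisely when $n|a|<|b|$ for all $n\in\N$, so that a larger index corresponds to a more dominant part of $\Lambda$ --- this matches the Hahn-product convention (supports possessing maximal elements, lexicographic order read from the top). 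The convex subgroups of $\Lambda$ form a chain, and $I$ is canonically the set of ``jumps'' of this chain: for $\gamma=[a]$ let $\Lambda^{\gamma}$ be the smallest convex subgroup containing $a$ and $\Lambda_{\gamma}$ the largest convex subgroup not containing $a$; then $B_{\gamma}\define\Lambda^{\gamma}/\Lambda_{\gamma}$ is an Archimedean ordered abelian group, and by H\"older's theorem I fix an order-embedding $\theta_{\gamma}\colon B_{\gamma}\hookrightarrow\R$ for every $\gamma\in I$. (It does no harm to first replace $\Lambda$ by its divisible hull $\Lambda\otimes_{\Z}\Q$, since an embedding of the hull restricts to one of $\Lambda$; this makes the groups $B_{\gamma}$ into $\Q$-vector spaces and streamlines the extension step below.)

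Next I would pass to the valuation-theoretic picture. The map $v\colon\Lambda\setminus\{0\}\to I$, $v(a)=[a]$, is the natural valuation: $v(a-b)\le\max(v(a),v(b))$ with equality when $v(a)\ne v(b)$, and $0<a<b$ implies $v(a)\le v(b)$. The Hahn product $H\define H(I,\R)$ carries the analogous valuation $w(h)=\max(\operatorname{supp} h)$. The target is an order-embedding $\Phi\colon\Lambda\hookrightarrow H$ which is valuation-compatible, meaning $w(\Phi(a))=v(a)$ for $a\ne0$, and which has the prescribed leading coefficient $\Phi(a)_{v(a)}=\theta_{v(a)}\bigl(a+\Lambda_{v(a)}\bigr)$; such a $\Phi$ is automatically injective and order-preserving.

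I would build $\Phi$ by a Zorn's-lemma argument on the set of partial data $(\Lambda_{0},\Phi_{0})$, where $\Lambda_{0}\le\Lambda$ is a subgroup and $\Phi_{0}\colon\Lambda_{0}\to H$ is a valuation-compatible order-embedding with the correct leading coefficients, ordered by extension; unions handle chains, so a maximal element $(\Lambda_{0},\Phi_{0})$ exists. The crux --- and the step I expect to be the main obstacle --- is to show $\Lambda_{0}=\Lambda$, i.e.\ to extend $\Phi_{0}$ over a single new element $a\in\Lambda\setminus\Lambda_{0}$. One distinguishes two cases according to whether $\{v(a-x):x\in\Lambda_{0}\}$ has a maximum. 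If it does, attained at $x_{0}$ with value $\gamma$, then $a-x_{0}$ realizes an element of $B_{\gamma}$ missing from the image of $\Lambda_{0}$, and one defines $\Phi_{0}(a)$ by modifying $\Phi_{0}(x_{0})$ in the single coordinate $\gamma$ via $\theta_{\gamma}$. If it does not, one chooses $x_{i}\in\Lambda_{0}$ with $v(a-x_{i})$ strictly increasing and cofinal; then $(x_{i})$ is pseudo-Cauchy (consecutive differences have strictly increasing valuation), hence so is $(\Phi_{0}(x_{i}))$ in $H$, and here pseudo-completeness of the Hahn product furnishes a pseudo-limit $h\in H$, which one takes as $\Phi_{0}(a)$. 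In both cases one extends $\Phi_{0}$ to $\langle\Lambda_{0},a\rangle$ (working $\Q$-linearly in the divisible hull to avoid torsion issues) and must verify that the result is still a valuation-compatible order-embedding with the right leading coefficients --- in particular that $w\bigl(\Phi_{0}(qa+x)\bigr)=v(qa+x)$ for all $x\in\Lambda_{0}$ and rational $q$, and that the support of every new image still has a maximal element. This last bookkeeping, showing that no element $qa+x$ acquires the wrong sign or an ill-formed support, is the genuinely technical heart of the proof; once it is done, maximality is contradicted, so $\Lambda_{0}=\Lambda$ and $\Phi$ is the desired embedding.
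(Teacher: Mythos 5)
The paper does not prove this statement at all: Hahn's embedding theorem is quoted as a classical result with a citation to Hahn, so there is no internal proof to compare yours against. Your outline is the standard modern proof (divisible hull, Archimedean classes as the value set, H\"older's theorem for the rib quotients $B_\gamma$, and a Zorn/Kaplansky-style extension argument using pseudo-completeness of the Hahn product), and that architecture is the right one.

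Two concrete criticisms. First, the orientation of your extension dichotomy contradicts your own ordering of $I$. You declare $[a]<[b]$ when $b$ dominates $a$, so under your valuation ``$x$ approximates $a$ well'' means $v(a-x)$ is \emph{small}; hence the case split must be on whether $\{v(a-x):x\in\Lambda_0\}$ has a \emph{minimum}, and a pseudo-Cauchy sequence is one whose consecutive differences have strictly \emph{decreasing} value, with the approximating values chosen coinitial rather than cofinal. As literally written, the ``maximum'' case is false: if $x_0\in\Lambda_0$ has maximal Archimedean class and dominates $a$, then $v(a-x_0)$ is maximal but the residue of $a-x_0$ at that class is just the (negated) residue of $x_0$, certainly not missing from the image of $\Lambda_0$; it is only at a \emph{minimum} that the minimality forces the residue of $a-x_0$ to lie outside the residue space of $\Lambda_0$ (otherwise subtracting a suitable $y\in\Lambda_0$ would produce a strictly better approximation). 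This is a fixable sign/direction slip, but as stated the step fails. Second, you explicitly defer the verification that the extension of $\Phi_0$ to $\langle\Lambda_0,a\rangle$ remains valuation-compatible with correct leading coefficients, i.e.\ that $w(\Phi_0(qa+x))=v(qa+x)$ for all $q\in\Q$, $x\in\Lambda_0$. That verification (in the pseudo-limit case it rests on comparing $v(a-x)$ with $w(h-\Phi_0(x))$ via the pseudo-limit property) is the actual content of Hahn's theorem; without it, what you have is a correct and well-organized plan of the classical proof rather than a proof.
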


Let $(X,\cA)$ be a $\Lambda$--building. By the third Main Result and the above theorem one can embed this building in a $\Lambda'$--building where $\Lambda'$ is the Hahn product of copies of the real numbers $\R$ over an ordered set $I$. So assume $\Lambda$ is of this form. Additionaly assume that $I$ is well-ordered (so every non-empty subset of $I$ has a least element). Our fixed point theorem is now as follows.

\begin{theorem}\label{theorem:fixie}
A finite group of isometries of a generalized affine building $(X,\cA)$ with the above properties admits a fixed point.
\end{theorem}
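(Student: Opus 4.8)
The plan is to argue by transfinite induction, using the base change functors to peel the ordered group $\Lambda$ down to $\R$ one archimedean layer at a time. By the discussion preceding the statement we may assume $\Lambda = H(I,\R)$, the Hahn product of copies of $\R$ over a well-ordered index set $I$ (with the largest index most significant; since supports are reverse-well-ordered, each $\lambda\in\Lambda$ has a largest index in its support). For an ordinal $\beta\le I$ write $\Lambda_\beta\define H(\beta,\R)$, viewed as the subgroup of $\Lambda$ consisting of the elements whose support lies below $\beta$. One checks routinely that each $\Lambda_\beta$ is a convex subgroup of $\Lambda$, that $\Lambda_0=\{0\}$ and $\Lambda_1=\R$, that $\Lambda_{\beta+1}/\Lambda_\beta\cong\R$ (project onto the index-$\beta$ coordinate), and that $\Lambda_\lambda=\bigcup_{\beta<\lambda}\Lambda_\beta$ for a limit ordinal $\lambda$ (because the largest index in any support is $<\lambda$). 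I will prove, by induction on $\beta\le I$, that every finite group acting by isometries on a $\Lambda_\beta$--building has a fixed point; the case $\beta=I$ is the theorem.

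The cases $\beta=0$ (where the building is a single point) and $\beta=1$ form the base of the induction; the case $\beta=1$ is the classical fixed point theorem for $\R$--buildings (a finite group of isometries fixes a point, e.g.\ the circumcenter of an orbit in the $\mathrm{CAT}(0)$ space). For a successor $\beta+1$ with $\beta\ge1$: the projection $e\colon\Lambda_{\beta+1}\to\Lambda_{\beta+1}/\Lambda_\beta\cong\R$ is an epimorphism of ordered abelian $F$--modules with convex kernel $\Lambda_\beta$, so Main Result~\ref{theorem:firstmain} produces a base change functor $\phi$ onto an $\R$--building $X'$, and by part~\ref{item:mr13} of that result $G$ acts on $X'$ with $\phi$ being $G$--equivariant. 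By the base case $G$ fixes some $\bar x\in X'$; then $X''\define\phi^{-1}(\bar x)$ is nonempty, is a $\Lambda_\beta$--building with the inherited metric by Main Result~\ref{theorem:secondmain}(\ref{item:mr21}), and is $G$--invariant since $\phi(g\cdot x'')=g\cdot\phi(x'')=g\cdot\bar x=\bar x$ for $x''\in X''$. The induction hypothesis applied to the $\Lambda_\beta$--building $X''$ yields a $G$--fixed point, which lies in $X$.

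For a limit ordinal $\lambda$, fix any $x_0\in X$. The orbit $Gx_0$ is finite, so the finitely many distances $d(gx_0,hx_0)$ with $g,h\in G$ all lie in a common layer $\Lambda_\beta$ with $\beta<\lambda$ (a finite set of ordinals $<\lambda$ has supremum $<\lambda$, and the $\Lambda_\beta$ are nested). The kernel $\Lambda_\beta$ of the epimorphism $e\colon\Lambda_\lambda\to\Lambda_\lambda/\Lambda_\beta$ is convex, so Main Result~\ref{theorem:secondmain}(\ref{item:mr21}) makes $X''\define\phi^{-1}(\phi(x_0))=\{x\in X:d(x_0,x)\in\Lambda_\beta\}$ into a $\Lambda_\beta$--building; it contains all of $Gx_0$ since $d(x_0,gx_0)\in\Lambda_\beta$. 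Moreover $X''$ is $G$--invariant: if $x\in X''$ and $g\in G$ then, using $d(x_0,gx_0)\in\Lambda_\beta$ and $d(gx_0,gx)=d(x_0,x)\in\Lambda_\beta$, the triangle inequality gives $d(x_0,gx)\le d(x_0,gx_0)+d(gx_0,gx)$, an element of the subgroup $\Lambda_\beta$, whence $d(x_0,gx)\in\Lambda_\beta$ by convexity, i.e.\ $gx\in X''$. Thus $G$ acts by isometries on the $\Lambda_\beta$--building $X''$ with $\beta<\lambda$, and the induction hypothesis provides a fixed point. This completes the induction, and with it the proof.

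I expect the main difficulty to be the limit step, and more precisely keeping straight the bookkeeping of the convex ``layers'' $\Lambda_\beta=H(\beta,\R)$ — that they are convex subgroups, that the relevant quotients are again of the allowed form, and, crucially, that $\phi^{-1}(\mathrm{pt})$ is genuinely $G$--stable, which is what allows a finite orbit to be trapped inside a building modeled over a strictly ``shorter'' ordered group. A minor point to be careful about at the outset is that the reduction to a Hahn product over a \emph{well-ordered} $I$ (via Hahn's embedding theorem and the third main result) is exactly what is packaged in the hypothesis ``of this class'', and that the $G$--action is inherited under that embedding.
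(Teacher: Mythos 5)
Your proof is correct and is essentially the paper's own argument: both peel off one archimedean layer of the Hahn product at a time, using the first main result to project onto an $\R$--building where the finite group fixes a point, the second main result to pass to the fibre over that fixed point (a building over the convex kernel) which contains the orbit, and the well-ordering of $I$ to guarantee termination --- you organize this as a transfinite induction from below, while the paper runs the same reduction as a descending iteration on the top Hahn index of a bound on the orbit, trapping the orbit in the convex layer $M_{g_0}$ exactly as in your limit step. The one point to fix is the justification of the $\beta=1$ base case: generalized affine $\R$--buildings need not be metrically complete, so the circumcenter/Bruhat--Tits argument is not available in general, and one should instead invoke the fixed point theorem for finite groups of isometries of $\R$--buildings due to the second author (the result \cite{KS} cited in the paper's proof).
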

\proof

For an arbitrary non-zero element $h$ of $\Lambda$ let $i_h \in I$ be the maximal element of the set of indices in $I$ with non-zero entries in the representation of $h$ as a product. If $h$ is zero, then we set $i_h$ to be $-\infty$. For a non-zero element $g$ of $\Lambda$, one can now define the following two convex subgroups of $\Lambda$: 
\begin{align*}
M_g := \{\lambda \in \Lambda: i_\lambda \leq i_g\}, \\
N_g := \{\lambda \in \Lambda: i_\lambda < i_g\}. 
\end{align*}
Note that $M_g \diagup N_g$ is isomorphic to $\R$. 

Choose some point $x_0$ of $X_0:=X$, the orbit of $x_0$ is finite, hence it is bounded by some element $g_0 \in\Lambda$. If $g_0$ is zero then we have found a fixed point, so suppose this is not the case. The points of $X$ at distance in $M_{g_0}$ from $x_0$ form a $M_{g_0}$--building (by applying the second Main Result on the canonical epimorphism $\Lambda \rightarrow \Lambda \diagup M_{g_0}$). Note that $G$ stabilizes this $M_g$--building. Consider the canonical epimorphism $M_{g_0} \rightarrow M_{g_0} \diagup N_{g_0}$, so using the first Main Result we obtain an $\R$--building on which $G$ acts, still as a finite group of isometries (we mark this step by (*) for further reference). By a result of the second author, this action has a fixed point $y_0$ (see~\cite{KS}). In the original $\Lambda$--building $(X,\cA)$ this point $y_0$ corresponds to a set $X_1$ of points with distance in $N_{g_0}$ from a certain point $x_1$. By the second Main Result we can consider the set $X_1$ as an $N_{g_0}$--building stabilized by $G$. So we can repeat this algorithm. Because $I$ is well-ordered, the algorithm has to stop at some point where $g_i = 0$. When this happens we have obtained a fixed point of $G$ in $(X,\cA)$.
\qed

This result should not be interpreted as a full investigation into fixed point theorems of $\Lambda$--buildings, but as a quick example of how our results combined with Hahn's embedding theorem can reduce problems to the $\R$--building case. Even when the theorems one obtains in this way do not hold in full generality (like this fixed point theorem), they might help to better understand generalized affine buildings and point out where possible difficulties might occur.

We end with an example of this last thing. In~\cite{KS} there is proved that a finitely generated bounded group of isometries of an $\R$-building has a fixed point, so what happens if we only ask this weaker condition to be fulfilled in the above theorem? An analoguous proof would fail in step (*), one cannot show that this new action is bounded. Indeed, consider the lexicographically ordered abelian group $G := \R \times \R$ as a generalized affine building with only one apartment, and define for each $k \in \mathbb{Z}$ the isometry 
$G\rightarrow G: (x,y) \mapsto (x,y+kx)$. All these isometries form a finitely generated bounded group (as each orbit is bounded by $(1,0)$), but it has no fixed point. So this exercise shows that there lies a difficulty in the notion of boundedness.

% %%%%%%%%%%%%%%%%%%%%%%%%%%%

\renewcommand{\refname}{Bibliography}
\addcontentsline{toc}{section}{\refname}
\bibliography{bibliography}
\bibliographystyle{alpha}

\end{document}